\documentclass[reqno,english,11pt]{amsart}
\usepackage{amsfonts,amsmath,latexsym,verbatim,amscd,mathrsfs,color,array}
\usepackage{amssymb,amsthm,graphicx }
\usepackage[colorlinks=true]{hyperref}
\usepackage[normalem]{ulem}
\usepackage{enumitem}

\hypersetup{
	colorlinks=true,
	linkcolor=red
}
\usepackage{stmaryrd}

\allowdisplaybreaks

\usepackage[hmargin=2.3cm, vmargin=2.3cm]{geometry}
\usepackage{bbm} 
\usepackage{pdfsync}
\usepackage{epstopdf}
\usepackage{cite}
\usepackage{graphicx}
\usepackage{multirow}
\usepackage[font=bf,aboveskip=15pt]{caption}
\usepackage[toc,page]{appendix}

\usepackage{bookmark}
\DeclareFontShape{OT1}{cmr}{bx}{sc}{<-> cmbcsc10}{}
\usepackage[square,numbers]{natbib}

\usepackage{esint}
\usepackage{float}
\usepackage{ulem}
\usepackage{syntonly}
\usepackage{mathtools}
\usepackage{bm}

\usepackage{pdfsync}
\usepackage{epstopdf}
\usepackage{upgreek}

\newtheorem{theorem}{Theorem}
\newtheorem{proposition}{Proposition}[section]

\newtheorem{lemma}[proposition]{Lemma}

\theoremstyle{definition}

\numberwithin{equation}{section}

\newcommand{\R}{\mathbb{R}}
\newcommand{\N}{\mathbb{N}}
\newcommand{\Tr}{\operatorname{Tr}}
\newcommand{\rank}{\operatorname{rank}}
\newcommand{\dist}{\operatorname{dist}}
\newcommand{\ketbra}[1]{|#1\rangle\langle #1|}

\newcommand{\abs}[1]{\lvert #1 \rvert}

\newcommand{\Rank}{{\rm rank~ }}
\newcommand{\SQ}{{\mathcal{S}^q}}

\def\R{{\mathbb R}}

\def\inte{\int_{\mathbb{R}^3}}
\def\f{\frac}
\def\al{\alpha}
\def\ga{\gamma}
\def\va{\varepsilon}
\def\van{\varepsilon_n}

\def\l{\left}
\def\r{\right}
\def\la{\langle}
\def\ra{\rangle}

\usepackage{etoolbox}

\makeatletter

\patchcmd{\@settitle}
{\bfseries}
{\normalfont\fontsize{16}{18}\selectfont}
{}
{}

\patchcmd{\@settitle}
{\uppercasenonmath\@title}
{}
{}
{}

\patchcmd{\@setauthors}
{\MakeUppercase{\authors}}
{\authors}
{}
{}

\renewcommand\section{\@startsection{section}{1}%
	\z@{.7\linespacing\@plus\linespacing}{.5\linespacing}%
	{\normalfont\bfseries\raggedright}}

\makeatother

\begin{document}
	\title[Mass concentration of Fermi systems]
	{Mass concentration and ground states of Fermi systems in Hartree--Fock model with potentials}

	\date{}

        \author[Z. Li, Y. Wu and X. Zeng]{
	{\normalfont\large
		Zhuocheng Li$^{1}$ \qquad
		Yuan-da Wu$^{2}$ \qquad
		Xiaoyu Zeng$^{1}$}
	\\[9pt]
	{\normalfont\normalsize\itshape
		$^{1}$Center for Mathematical Sciences, Wuhan University of Technology,}
	\\[-1pt]
	{\normalfont\normalsize\itshape
		Wuhan 430070, P.R. China}
	\\[4pt]
	{\normalfont\normalsize\itshape
		$^{2}$School of Mathematical Sciences, Tianjin University of Technology,}
	\\[-1pt]
	{\normalfont\normalsize\itshape
		Tianjin 300384, P.R. China}
}
	
		\thanks{		E-mail: lzc2023203958@whut.edu.cn(Z.C. Li); wyuanda2021@126.com(Y. Wu); xyzeng@whut.edu.cn (X.Y. Zeng);.		}

	\maketitle
	
	

	\vskip0.36in

	\begin{abstract}
Understanding the onset and mechanism of collapse at critical attractive interaction is a fundamental problem in the variational theory of finite fermionic systems. In this paper, we study this problem within the Hartree--Fock framework under various trapping potentials. By establishing a finite-rank Hartree-type Gagliardo--Nirenberg--Sobolev inequality, we identify a critical threshold $\mathcal{K}^{(N)}$ governing the existence of ground states. We then determine the concentration behavior of ground states as the interaction parameter approaches $\mathcal{K}^{(N)}$. To reveal how the geometry of the trap influences this critical phenomenon, we consider three representative classes of potentials: polynomial-type potentials, ring-shaped potentials, and potentials whose zero set has nonempty interior. For each class, we derive the concentration scale and sharp asymptotics of the least energy. The resulting regimes are qualitatively different, showing that both the location and the rate of concentration are selected by the local geometry of the potential near its minimum set. In contrast to scalar Hartree problems, the nonlocal interaction is coupled here with pairwise $L^2$-orthogonality constraints, and their interplay creates substantial difficulties in the compactness and blow-up analysis.

	\end{abstract}
	\noindent {\bf MSC}: 35J20; 35J60; 35Q55
	
	\noindent {\bf Keywords}: Gagliardo-Nirenberg-Sobolev  inequality; finite Fermi system; ground state; asymptotic behavior

	\tableofcontents
	
	\vskip0.6in
	
	
	\section{Introduction}
	The mathematical analysis of interacting fermionic systems has attracted considerable attention in recent years, motivated by developments in trapped ultracold Fermi gases \cite{BrackMurthy,Dean,Giorgini,Smith}. Spinless fermions provide a convenient setting in which the effect of particle interactions can be studied without additional spin degrees of freedom. Among the effective descriptions of such systems, Hartree and Hartree--Fock theories lead naturally to variational problems with nonlocal interaction terms.
	
	In the present paper we consider finite spinless Fermi systems in three dimensions under external trapping potentials. In the Hartree--Fock--Slater framework, the attractive interaction is represented by a Hartree convolution term, and the energy functional is
	\begin{equation}
		E_K(\gamma)
		:=\Tr\bigl(( -\Delta+V(x))\gamma\bigr)
		-K\int_{\R^3}\bigl(\rho_\gamma*|x|^{-2}\bigr)\rho_\gamma\,dx,
		\label{eq:energy}
	\end{equation}
	where $K>0$ is the coupling strength. For a non-negative finite-rank self-adjoint operator
	\[
	\gamma=\sum_{j=1}^{N}n_j\ketbra{u_j},
	\qquad n_j\ge0,
	\qquad (u_i,u_j)_{L^2(\R^3)}=\delta_{ij},
	\]
	the associated density and kinetic energy are given by
	\[
	\rho_\gamma(x)=\sum_{j=1}^{N}n_j|u_j(x)|^2,
	\qquad
	\Tr(-\Delta\gamma)=\sum_{j=1}^{N}n_j\int_{\R^3}|\nabla u_j|^2\,dx.
	\]
	The nonlocal Hartree term in \eqref{eq:energy} is critical with respect to the same spatial scaling as the kinetic energy: under the dilation $u_\lambda(x)=\lambda^{3/2}u(\lambda x)$, both $\|\nabla u_\lambda\|_2^2$ and $\int (\rho_\lambda\ast |x|^{-2})\rho_\lambda\,dx$ scale as $\lambda^2$, and this critical balance is responsible for the threshold and concentration phenomena considered below.
	
	Let
	\[
	\mathcal H
	:=\left\{u\in H^1(\R^3):\int_{\R^3}V(x)|u(x)|^2\,dx<\infty\right\}
	\]
	and, for $N\in\N^+$, define
	\[
	\Upsilon_N
	:=\left\{\gamma=\sum_{j=1}^{m}\ketbra{u_j}:\ 1\le m\le N,
	\ u_j\in\mathcal H,\ (u_i,u_j)_{L^2}=\delta_{ij}\right\}.
	\]
	Our main variational problem is
	\begin{equation}
		e_K(N):=\inf\{E_K(\gamma):\gamma\in\Upsilon_N\}.
		\label{eq:mainvar}
	\end{equation}
	We assume throughout the paper that the trapping potential satisfies
	\begin{equation*}
		\tag{V1}\label{V1}
		0\le V\in C(\R^3),
		\qquad \lim_{|x|\to\infty}V(x)=+\infty,
		\qquad \inf_{x\in\R^3}V(x)=0.
	\end{equation*}
	Under our assumptions on $V$, the embedding $\mathcal H\hookrightarrow L^2(\mathbb{R}^3)$ is compact, which will be crucial for recovering compactness of minimizing sequences. And the condition $V(x)\to+\infty$ makes \eqref{eq:mainvar} a natural model for confined fermionic states.
	
	Related ground-state problems for finite Fermi systems with local nonlinear interactions have been studied in several settings. A pioneering result in this field was obtained by Guo and Li \cite{GuoLi}, who considered two-spinless mass-critical Fermi systems under trapping potentials and identified a sharp critical interaction strength separating the existence and non-existence of ground states. Subsequently, for Coulomb-type external potentials, Chen and Guo \cite{ChenGuo} established the existence of ground states in the $L^2$-subcritical regime. These results naturally call for a rigorous treatment of the existence problem for finite Fermi systems with a nonlocal Hartree interaction.
	
	A fundamental question is whether a minimizer of \eqref{eq:mainvar} exists, and how this depends on the parameter $K$. The attractive Hartree term shows that this question is governed by the sharp constant
	\begin{equation}
		0<\mathcal{K}^{(N)}
		:=\inf_{\substack{0<\gamma=\gamma^*\le1\\1\le\rank\gamma\le N}}
		\frac{\|\gamma\|_{\mathcal{S}^\infty}\Tr(-\Delta\gamma)}
		{\displaystyle\int_{\R^3}(\rho_\gamma*|x|^{-2})\rho_\gamma\,dx},
		\label{eq:critical}
	\end{equation}
	where for $\gamma=\sum_{i=1}^N n_i|u_i\ra\la u_i|$, define
	\begin{equation*}
		\|\gamma\|_{\mathcal{S}^q}:=\begin{cases}
			(\sum_{i=1}^{\infty}n_i^q)^{\frac{1}{q}}<\infty ,\quad &\text{ if } 1\le q<\infty,\\
			\max_{i}n_i<\infty,\quad &\text{ if } q=\infty.
		\end{cases}
	\end{equation*}
	Indeed, \eqref{eq:critical} is exactly the best constant in the estimate controlling the Hartree interaction by the kinetic energy, and therefore provides the natural candidate for the critical coupling in \eqref{eq:mainvar}.
	
	If $\gamma=\sum_{i=1}^{m}\ketbra{u_i}$ is a minimizer, the functions $\{u_i\}$ satisfy the self-consistent system
	\begin{equation}
		\left[-\Delta+V(x)-2K\bigl(\rho_\gamma*|x|^{-2}\bigr)\right]u_i
		=\mu_i u_i
		\qquad\text{in }\R^3,
		\label{eq:EL}
	\end{equation}
	where the occupied states $\{u_i\}$ correspond to the lowest eigenvalues of the effective operator. We use the term \textbf{ground state} for such a minimizing orthonormal family.
	
	Our first result identifies the sharp existence threshold for problem \eqref{eq:mainvar}.
	
	\begin{theorem}\label{thm:existence}
		Let $N\in\N^+$, let $\mathcal{K}^{(N)}$ be defined by \eqref{eq:critical}, and assume that $V$ satisfies \eqref{V1}.
		\begin{enumerate}[label=\textup{(\arabic*)},leftmargin=2.2em]
			\item If $0<K<\mathcal{K}^{(N)}$, then $e_K(N)$ admits at least one minimizer
			\[
			\gamma=\sum_{i=1}^{m}\ketbra{u_i},\qquad 1\le m\le N,
			\]
			and $(u_1,\cdots,u_m)$ is a ground state of \eqref{eq:EL}.
			\item If $K>0$ is sufficiently small, the minimizer $\gamma$ is unique and $\rank \gamma=m=1$. If $K<\mathcal{K}^{(N)}$ is sufficiently close to $\mathcal{K}^{(N)}$,  then $\rank \gamma=m\geq \lfloor N/2 \rfloor+1$.
			\item If $K\ge \mathcal{K}^{(N)}$, then $e_K(N)$ has no minimizer.
		\end{enumerate}
	\end{theorem}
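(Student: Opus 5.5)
The plan follows the classical Guo--Seiringer scheme for mass-critical problems, adapted to finite-rank operators. For every $\gamma\in\Upsilon_N$ we have $\|\gamma\|_{\mathcal S^\infty}=1$, so \eqref{eq:critical} gives
\[
\int_{\R^3}(\rho_\gamma*|x|^{-2})\rho_\gamma\,dx\le \frac{1}{\mathcal{K}^{(N)}}\Tr(-\Delta\gamma),
\]
and hence
\[
E_K(\gamma)\ge\Bigl(1-\frac{K}{\mathcal{K}^{(N)}}\Bigr)\Tr(-\Delta\gamma)+\Tr(V\gamma).
\]

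\textbf{Part (1).} For $K<\mathcal{K}^{(N)}$ the lower bound makes any minimizing sequence $\gamma_n=\sum_{j=1}^{m_n}\ketbra{u_j^n}$ bounded in $\mathcal H$. Passing to a subsequence, $m_n\equiv m$ is constant. The compact embedding $\mathcal H\hookrightarrow L^2$ gives $u_j^n\to u_j$ strongly in $L^2$, and by interpolation with the $H^1$ bound also strongly in $L^p$ for $2\le p<6$. Strong $L^2$ convergence preserves orthonormality, so the limit lies in $\Upsilon_N$. The Hardy--Littlewood--Sobolev inequality, $\|\rho\|_{L^{3/2}}$ control and the convergence $\rho_{\gamma_n}\to\rho_\gamma$ in $L^{3/2}$ make the Hartree term continuous. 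Weak lower semicontinuity of the kinetic and potential terms then shows the limit is a minimizer. For the Euler--Lagrange system \eqref{eq:EL}, I will vary within the orthonormal frame and diagonalize the Lagrange multiplier matrix by a unitary rotation, which leaves $\gamma$ invariant. To show the $u_i$ are the lowest eigenfunctions of $H_\gamma=-\Delta+V-2K\rho_\gamma*|x|^{-2}$, I will use the standard Lieb variational principle argument: replacing an occupied state by a lower unoccupied eigenfunction, or adding one when $m<N$ and a negative eigenvalue is unoccupied, would lower the energy to first order, because the quadratic correction is of order $\epsilon^2$.

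\textbf{Part (3).} For $K>\mathcal{K}^{(N)}$, pick $\gamma_0$ nearly optimal in \eqref{eq:critical}. Normalize its occupation numbers to $\|\gamma_0\|_{\mathcal S^\infty}\le 1$, then use the convexity/linearity in $\gamma$ of the ratio's numerator to reduce to projections. Dilating with $u_\lambda=\lambda^{3/2}u(\lambda x)$ and cutting off gives $E_K\to-\infty$ as $\lambda\to\infty$, so there is no minimizer.

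The critical case $K=\mathcal{K}^{(N)}$ is the subtle one. I will first show $e_K(N)=0$ by using trial states built from an optimizer $Q$ of \eqref{eq:critical}, whose existence I will assume from the finite-rank GNS result established in the paper. These are localized near a minimum point $x_0$ of $V$ ($V(x_0)=0$) and concentrated at scale $\lambda\to\infty$, with a cutoff. The kinetic and Hartree terms cancel up to exponentially small errors, and $\Tr(V\gamma_\lambda)\to V(x_0)=0$. If a minimizer existed, then $E_K(\gamma)\ge\Tr(V\gamma)>0$, since $V\not\equiv 0$ on the support of a nontrivial $H^1$ function. This contradicts $e_K(N)=0$.

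One technical point is that the optimizer of \eqref{eq:critical} may not be a projection. Since the quotient is $0$-homogeneous and the numerator is linear in $\gamma$ for fixed eigenvectors, I expect an optimizer with all $n_i\in\{0,1\}$. If not, the trial state must be approximated differently, which is a main obstacle.

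\textbf{Part (2).} For small $K$, $E_K$ is a perturbation of $\Tr((-\Delta+V)\gamma)$. Its minimum over $\Upsilon_N$ is attained uniquely at rank one by the lowest eigenfunction $\phi_1$ of $-\Delta+V$; this uses $\lambda_1>0$, since adding positive-energy states only increases the energy. Adding a further orbital costs roughly $\lambda_2-O(K)>0$ (indeed at least $\lambda_2$ minus the Hartree gain), and uniqueness follows from an implicit-function or strict-convexity argument near $\phi_1$, using the nondegeneracy of $\lambda_1$.

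For $K\uparrow\mathcal{K}^{(N)}$, minimizers blow up. I will argue that $\mathcal K^{(m)}>\mathcal K^{(N)}$ when $m\le N/2$, by showing that combining two far-separated rank-$m$ near-optimizers strictly improves the constant, so that $\mathcal K^{(2m)}<\mathcal K^{(m)}$. If $\rank\gamma_K=m\le\lfloor N/2\rfloor$ along a sequence, the energy would then satisfy $e_K(N)\ge c>0$ uniformly in $K$, contradicting $e_K(N)\to 0$, which follows from the trial states above. The \textbf{main obstacle} is the strict inequality $\mathcal K^{(2m)}<\mathcal K^{(m)}$. I would try superadditivity of the Hartree term under separation of supports, where the cross term contributes positively at leading order, combined with the orthogonality constraints.
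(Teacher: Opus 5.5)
\textbf{Overall.} Your architecture matches the paper's: GNS coercivity plus compactness of $\mathcal H\hookrightarrow L^2$ for existence, orbital swapping for the aufbau property, and an energy comparison for rank one at small $K$. Likewise the trial states are built from a cut-off, rescaled, L\"owdin-orthonormalized optimizer of \eqref{eq:critical} placed at a zero of $V$. The rank near criticality comes from $\mathcal K^{(\lfloor N/2\rfloor)}>\mathcal K^{(N)}$, which is Theorem~\ref{thm:GNS}(iv).

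\textbf{The gap: the critical case $K=\mathcal K^{(N)}$.} You claim $\Tr(V\gamma)>0$ because ``$V\not\equiv0$ on the support of a nontrivial $H^1$ function''. This is false under \eqref{V1}. The zero set of $V$ may have nonempty interior, which is exactly case \eqref{V4} treated later in the paper. Any orthonormal family in $H^1$ supported inside that interior has $\Tr(V\gamma)=0$.

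What you need is the other half of the equality. $E_{\mathcal K^{(N)}}(\gamma)$ is the sum of the two nonnegative quantities $\Tr(-\Delta\gamma)-\mathcal K^{(N)}\int(\rho_\gamma*|x|^{-2})\rho_\gamma\,dx$ and $\Tr(V\gamma)$. Since $e_{\mathcal K^{(N)}}(N)=0$, a minimizer makes both vanish. The first says that $\gamma$, a projection, is an optimizer of \eqref{eq:critical}. By Theorem~\ref{thm:GNS}(ii)--(iii) its orbitals are then the lowest eigenfunctions of $-\Delta-2\mathcal K^{(N)}\rho_\gamma*|x|^{-2}$. Hence $u_1>0$ everywhere, so $\rho_\gamma>0$ everywhere. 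Since $V>0$ outside a large ball, $\Tr(V\gamma)>0$, which is the contradiction the paper uses.

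\textbf{Three smaller points.} The first two rest on a fact you never invoke: $f\mapsto\int(f*|x|^{-2})f\,dx$ is a positive semidefinite quadratic form, because the kernel has positive Fourier transform.

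First, this resolves your projection obstacle. Fix the orbitals and set $\rho=\sum_i n_i|u_i|^2$. Then $n\mapsto\int(\rho*|x|^{-2})\rho\,dx\big/\sum_i n_i\|\nabla u_i\|_2^2$ is convex on $[0,1]^N$ (quadratic over linear, extended by $0$ at the origin). So its maximum is attained at a nonzero vertex, i.e.\ some projection does at least as well in \eqref{eq:critical}. Linearity of the numerator alone would not give this; convexity of the denominator is what is needed. The paper instead reads this off Theorem~\ref{thm:GNS}(iii): for $\alpha=2$, $q=\infty$ all $k_i$ coincide.

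Second, the swap in Part (1). Inside $\Upsilon_N$ there is no admissible first-order variation of occupation numbers, and an orthonormal rotation changes the energy only at second order. What makes the swap work is the sign of the Hartree correction, not its size:
$\int(\rho'*|x|^{-2})\rho'\ge\int(\rho*|x|^{-2})\rho+2\int(\rho*|x|^{-2})(\rho'-\rho)$.
For the full swap this gives $E_K(\gamma')\le E_K(\gamma)+\mu_1-\mu_{k_1}$.

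Third, for $\mathcal K^{(2m)}<\mathcal K^{(m)}$, use two translates of an exact optimizer, not near-optimizers. The gain is a cross Hartree term of order $R^{-2}$. It has to beat the $O(R^{-\infty})$ overlap and L\"owdin corrections, which come from the exponential decay in Proposition~\ref{decay}. With near-optimizers, a fixed optimality defect would eventually dominate a gain that vanishes as $R\to\infty$.
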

	
	Theorem~\ref{thm:existence} shows that the sharp constant \eqref{eq:critical} is not merely an auxiliary quantity: it is the exact threshold separating the existence and non-existence of confined ground states. In the subcritical regime, the proof proceeds by combining the coercivity from \eqref{eq:critical} with the compactness induced by the trapping potential, which leads to the existence result. At and above the critical value, the principal difficulty is to construct trial states which preserve orthogonality while retaining the same critical scaling for the kinetic and Hartree terms. In addition, the rank statement near $K=0$ is perturbative, whereas the rank behavior near $\mathcal{K}^{(N)}$ is governed by strict inequalities among the finite-rank best constants as functions of $N$-a phenomenon that will be made precise through a sharp finite-rank Gagliardo-Nirenberg-Sobolev(GNS) inequality, which we now introduce.
	
	This motivates us to establish the following Hartree-type analogue of the finite-rank GNS inequality, which will be the main tool in the proof of Theorem \ref{thm:existence}. Closely related local problems have been studied for nonlinear Schr\"odinger systems under orthogonality constraints. Gontier, Lewin and Nazar \cite{Lewin} considered the variational problem
	\[
	J(N)=\inf\left\{\Tr(-\Delta\gamma)-\frac1p\int_{\R^d}\rho_\gamma^p\,dx:\
	\gamma=\sum_{i=1}^{N}\ketbra{u_i},\ (u_i,u_j)=\delta_{ij}\right\}
	\]
	in the mass-subcritical regime. Using concentration--compactness principle and binding inequalities, they proved the existence of minimizers and obtained the associated orthonormal nonlinear Schr\"odinger system. Their work highlights the importance of GNS inequalities in studying fermionic ground-state problems.
	
	Frank, Gontier and Lewin \cite{FGL} subsequently developed the corresponding finite-rank GNS inequality. In their local setting, the interaction is expressed through an $L^p$-norm of $\rho_\gamma$. The present problem requires an analogue in which the local nonlinearity is replaced by the nonlocal Hartree interaction. Nonlocal variational structures of Hartree or Hartree--Fock type have also appeared in the analysis of fermionic systems; see, for instance, \cite{ChenGuoNamOuYang,LenzmannLewin,WuZengZhang}. These works further motivate the study of sharp finite-rank inequalities in a Hartree setting.
	
	Moreover, motivated by the duality between GNS and Lieb--Thirring
	inequalities in related finite-rank problems \cite{FGL,WuZengZhang},
	the Hartree GNS inequality established here also leads to a natural
	dual Hartree-type Lieb--Thirring inequality. Although this inequality
	is not needed in the present work, we include it in Appendix~B as a complement.
	
	For this purpose, we introduce a more general family of best constants. For $0<\alpha<3$ and $1\le q\le\infty$, define
	\begin{equation}
		\mathcal{K}_{\alpha,q}^{(N)}
		:=\inf_{\substack{0<\gamma=\gamma^*\\1\le\rank\gamma\le N}}
		\frac{\|\gamma\|_{\mathcal{S}^q}^{\frac{4-\alpha}{\alpha}}\Tr(-\Delta\gamma)}
		{\left(\displaystyle\int_{\R^3}(\rho_\gamma*|x|^{-\alpha})\rho_\gamma\,dx\right)^{\frac2\alpha}}.
		\label{eq:GNS}
	\end{equation}
	The constant in \eqref{eq:critical} is precisely the special case
	\[
	\mathcal{K}^{(N)}=\mathcal{K}_{2,\infty}^{(N)}.
	\]
	The following theorem provides a complete characterization of all finite-rank best constants.
	
	Set
	\[
	q_\alpha^*:=\frac{4-\alpha}{(2-\alpha)_+}
	=\begin{cases}
		\dfrac{4-\alpha}{2-\alpha},&0<\alpha<2,\\[0.8ex]
		+\infty,&2\le\alpha<3.
	\end{cases}
	\]
	
	\begin{theorem}\label{thm:GNS}
		Let $1\le N<\infty$, $0<\alpha<3$, and $1\le q\le q_\alpha^*$. Then the infimum $\mathcal{K}_{\alpha,q}^{(N)}$ in \eqref{eq:GNS} is attained by a positive operator $\gamma$ with $1\le\rank\gamma\le N$. Moreover, up to the natural scaling:
		\begin{enumerate}[label=\textup{(\roman*)},leftmargin=2.2em]
			\item If $1\le q<q_\alpha^*$, every minimizing sequence for \eqref{eq:GNS} is compact up to scaling.
			\item Define
			\[
			H_{\alpha,\gamma}
			:=-\Delta-\frac4\alpha\bigl(\rho_\gamma*|x|^{-\alpha}\bigr).
			\]
			An optimizer can be written as
			\[
			\gamma=\sum_{i=1}^{M}k_i\ketbra{u_{n_i}},\qquad
			k_i>0,\quad 1\le M\le N,
			\]
			where $(\mu_{n_i},u_{n_i})$ are eigenpairs of $H_{\alpha,\gamma}$ with $\mu_{n_i}\le0$. If $M<N$, then $H_{\alpha,\gamma}$ has at most $M$ negative eigenvalues.
			\item If $1<q<q_\alpha^*$, then $\mu_{n_i}<0$ and
			\[
			k_i=\frac{4-\alpha}{\alpha}
			\left(\sum_{k=1}^{M}|\mu_{n_k}|^{\frac{q}{q-1}}\right)^{-1}
			|\mu_{n_i}|^{\frac1{q-1}}.
			\]
			For $\alpha=2$ and $q=\infty$,
			\[
			k_i=\left(\sum_{k=1}^{M}|\mu_{n_k}|\right)^{-1}.
			\]
			Especially, $\mu_{n_i}$ are the first $M$ negative eigenvalues of $H_{\alpha,\gamma}$ provided that either $0<\alpha\leq2$, or $2<\alpha<3$ and rank $\gamma < N$.
			\item If $0<\alpha\le2$ and $q$ is sufficiently close to $q_\alpha^*$, there exists a strict increasing sequence $\{N_j\}$ such that
			\[
			\mathcal{K}_{\alpha,q}^{(N_j)}<\mathcal{K}_{\alpha,q}^{(N_j-1)}.
			\]
			In particular,
			\begin{equation}
				\mathcal{K}_{\alpha,q}^{(2N)}<\mathcal{K}_{\alpha,q}^{(N)},\qquad N\in\N^+.
				\label{2n<n}
			\end{equation}
		\end{enumerate}
	\end{theorem}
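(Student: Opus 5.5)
We follow the strategy of Frank--Gontier--Lewin \cite{FGL}, replacing the local term $\int\rho_\gamma^p$ by the Hartree term $D_\alpha(\rho_\gamma):=\int(\rho_\gamma*|x|^{-\alpha})\rho_\gamma$.

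\textbf{Step 1: reduction and finiteness.} The quotient in \eqref{eq:GNS} is invariant under $\gamma\mapsto t\gamma$ and under $L^2$-dilations, so we may normalize $\|\gamma\|_{\mathcal{S}^q}=1$ and $\Tr(-\Delta\gamma)=1$. To see that $\mathcal{K}^{(N)}_{\alpha,q}>0$, write $\gamma=\sum_{i\le N}n_i\ketbra{u_i}$. By Hardy--Littlewood--Sobolev, $D_\alpha(\rho)\lesssim\|\rho\|_{6/(6-\alpha)}^2$. Interpolating $\|\rho_\gamma\|_{6/(6-\alpha)}$ between $\|\rho_\gamma\|_1=\Tr\gamma$ and $\|\rho_\gamma\|_3\lesssim\sum_i n_i\|\nabla u_i\|_2^2$ (Sobolev), and using that H\"older in the finitely many $n_i$ costs only an $N$-dependent constant, gives the bound for every $q$. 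The restriction $q\le q_\alpha^*$ is exactly the range in which the exponent $(4-\alpha)/\alpha$ together with the rank-$N$ constraint makes the quotient scale-consistent, so that concentration of mass cannot drive it to $0$.

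\textbf{Step 2: existence and compactness.} Take a normalized minimizing sequence $\gamma_k=\sum_{i\le N}n_i^k\ketbra{u_i^k}$. By finite rank we may pass to subsequences with $n_i^k\to n_i$ and $u_i^k$ bounded in $H^1$. If $\rho_{\gamma_k}$ vanished in the sense of Lions, the HLS estimate would force $D_\alpha(\rho_{\gamma_k})\to0$, which is impossible; so after translation we get a nonzero weak limit $\gamma$.

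For dichotomy, write $\gamma_k=\gamma+(\gamma_k-\gamma)$ with orthogonality asymptotically preserved. The kinetic energy splits additively, and the Hartree term satisfies the Brezis--Lieb splitting $D_\alpha(\rho_k)=D_\alpha(\rho)+D_\alpha(\rho_k-\rho)+o(1)$, since cross terms vanish for $|x|^{-\alpha}$ with $\alpha>0$ as the pieces separate. Combining this with the strict superadditivity of $t\mapsto t^{2/\alpha}$ and the subadditivity of $\|\cdot\|_{\mathcal{S}^q}^{q}$ yields a strict binding inequality when $q<q_\alpha^*$. This rules out splitting and proves (i).

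The endpoint $q=q_\alpha^*$ is the main obstacle, since there the inequalities become equalities and only existence is claimed. In that case I would argue by induction on $N$: either the minimizing sequence is compact, or a piece escapes, in which case $\mathcal{K}^{(N)}_{\alpha,q}$ is attained by one of the pieces, which has lower rank and has an optimizer by the induction hypothesis.

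\textbf{Step 3: Euler--Lagrange equations, items (ii)--(iii).} Perturb the optimizer by $\gamma+\epsilon\delta$ and by rotations within the eigenbasis. Stationarity in the $u_i$ gives $H_{\alpha,\gamma}u_i=\mu_iu_i$ with multiplier $\frac4\alpha$, after the normalization $D_\alpha=\Tr(-\Delta\gamma)$. Stationarity in the $n_i$, with the derivative of $\|\gamma\|_q^q$ equal to $qn_i^{q-1}$, gives $n_i^{q-1}\propto|\mu_i|$, which forces $\mu_i<0$ when $q>1$; normalizing then gives the formula for $k_i$. For $q=\infty$, $\alpha=2$, homogeneity gives $k_i=1/\sum_k|\mu_k|$.

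If $M<N$ and $H_{\alpha,\gamma}$ had $M+1$ negative eigenvalues, adding $t\ketbra{v}$ with $v$ an unoccupied negative eigenvector would decrease the quotient to first order in $t$ (second order when $q>1$, since the $\mathcal{S}^q$ cost is $O(t^q)$), a contradiction. The statement that the $\mu_{n_i}$ are the lowest eigenvalues comes from swapping an occupied level with a lower unoccupied one. For $\alpha\le2$ this swap is controlled because $|x|^{-\alpha}$ is positive-definite with suitable convexity.

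\textbf{Step 4: item (iv).} For $q$ near $q_\alpha^*$, take an optimizer for $N$ and a far translate of it, and form the rank-$2N$ trial state $\gamma+\tau_R\gamma$. As $R\to\infty$ the cross Hartree term contributes $+cR^{-\alpha}$ with $c>0$, while the $\mathcal{S}^q$ norm grows only by $2^{1/q}$. Near $q_\alpha^*$ the exponents balance, so the gain from the positive interaction term yields $\mathcal{K}^{(2N)}_{\alpha,q}<\mathcal{K}^{(N)}_{\alpha,q}$. To get the increasing sequence $N_j$, iterate this bound and use monotonicity of $N\mapsto\mathcal{K}^{(N)}$: strict drops must occur infinitely often.
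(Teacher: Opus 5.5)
Your architecture matches the paper's (non-vanishing, localization/dichotomy, Euler--Lagrange, doubling trial state). However, the inequality that drives Step~2 is misidentified, and it is the crux of both existence and compactness.

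\textbf{The binding inequality.} Superadditivity of $t\mapsto t^{2/\alpha}$ is not the mechanism. That map is additive at $\alpha=2$ and subadditive for $\alpha>2$. Where it is superadditive, it sits on the Hartree term in the quotient, where it favours dichotomy rather than excluding it. For the Schatten norm you need superadditivity, not subadditivity: $\Tr\gamma_n^q\ge\Tr(\chi_n\gamma_n\chi_n)^q+\Tr(\eta_n\gamma_n\eta_n)^q$ for IMS cutoffs with $\chi_n^2+\eta_n^2=1$. The paper writes the inequality in the form $D_\alpha(\rho_\gamma)\le(\mathcal{K}^{(N)}_{\alpha,q})^{-\alpha/2}\|\gamma\|_{\mathcal{S}^q}^{\frac{4-\alpha}{2}}\Tr(-\Delta\gamma)^{\frac{\alpha}{2}}$, which is linear in $D_\alpha$. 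It then combines the localized pieces via $a^sc^t+b^sd^t\le(a+b)^s(c+d)^t$ with $s=\frac{4-\alpha}{2q}$, $t=\frac{\alpha}{2}$. This holds exactly when $s+t\ge1$, i.e.\ $q\le q_\alpha^*$, and is strict when $s+t>1$ and both pieces are nontrivial. Your HLS argument gives $\mathcal{K}^{(N)}_{\alpha,q}>0$ for every $q$; the restriction $q\le q_\alpha^*$ enters only here.

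\textbf{Localize, do not split.} Localizing rather than splitting $\gamma_k=\gamma+(\gamma_k-\gamma)$ is essential. The remainder $\gamma_k-\gamma$ is not positive and $\rho_k-\rho$ need not be nonnegative, so the GNS inequality cannot be applied to it. By contrast, $\eta_n\gamma_n\eta_n\ge0$ has rank $\le N$, and the Hartree cross term is killed by an annulus carrying vanishing mass.

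\textbf{The endpoint needs no induction.} With the correct inequality, its non-strict form gives $\|\gamma\|_{\mathcal{S}^q}^{\frac{4-\alpha}{2}}\Tr(-\Delta\gamma)^{\frac{\alpha}{2}}\le(\mathcal{K}^{(N)}_{\alpha,q})^{\frac{\alpha}{2}}D_\alpha(\rho_\gamma)$ for the nonzero translated weak limit $\gamma$. That limit is therefore an optimizer for every $q\le q_\alpha^*$. If ``attained by one of the pieces'' means this, your induction is superfluous. If it means $\mathcal{K}^{(N)}_{\alpha,q}=\mathcal{K}^{(N')}_{\alpha,q}$ for some $N'<N$, it rests on the unproved claim that a piece has lower rank. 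That claim fails in general (split every orbital between two distant regions), and rescuing it would need an equality-case analysis you do not give.

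\textbf{Step~4.} The non-orthonormalized trial state $\gamma+\tau_R\gamma$ is admissible and simpler than the paper's Gram-matrix construction. Kinetic energy and density are exactly additive, and starting from an optimizer of rank $M\le N$ gives $\mathcal{K}^{(2N)}_{\alpha,q}<\mathcal{K}^{(N)}_{\alpha,q}$ directly, without the paper's reduction to a rank-$N$ optimizer. But $\|\gamma+\tau_R\gamma\|_{\mathcal{S}^q}^q=2\|\gamma\|_{\mathcal{S}^q}^q$ holds only up to an error governed by the overlaps $(u_i,u_j(\cdot-Re_1))$. You must show this error is $o(R^{-\alpha})$ so that the cross Hartree gain $cR^{-\alpha}$ dominates; this is where the paper uses $\mu_{n_i}<0$ and the exponential decay of Proposition~\ref{decay}. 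Moreover, the prefactor $2^{\frac{1}{\alpha}(\frac{4-\alpha}{q}-(2-\alpha))}$ equals $1$ only at $q=q_\alpha^*$. For $q<q_\alpha^*$ a fixed factor $>1$ must be beaten at finite $R$, which needs control of optimizers uniform in $q$ (the paper is equally terse here).

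\textbf{Step~3.} For $\alpha=2$, $q=\infty$, homogeneity alone does not give $k_i=(\sum_k|\mu_{n_k}|)^{-1}$. You first need $\mu_{n_i}<0$ (remove $\varepsilon\ketbra{u_{n_i}}$). You also need all occupations to equal $\|\gamma\|$: raising a non-maximal occupation of a negative level lowers the quotient at no cost in operator norm. Only then does $\sum_ik_i\mu_{n_i}=\Tr(-\Delta\gamma)-2D_2(\rho_\gamma)=-1$ give the formula. The swap argument for $2<\alpha<3$, $\rank\gamma<N$ is missing. Your unoccupied-level argument in (ii) genuinely needs $q>1$, since at $q=1$ the $\mathcal{S}^1$ cost is also first order. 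In fact the claim fails there for $\alpha<2$, $N\ge2$: by the Hoffmann-Ostenhof inequality the optimizers are rank one, while $\rho_\gamma*|x|^{-\alpha}\gtrsim|x|^{-\alpha}$ at infinity produces infinitely many negative eigenvalues. The paper's proof carries the same implicit restriction.
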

	
	The main new feature of Theorem~\ref{thm:GNS} is the nonlocal Hartree interaction. In contrast with a local $L^p$ term, a spatial decomposition of $\rho_\gamma$ do not interact, the nonlocal Hartree term retains cross-interactions between them through the convolution kernel. Consequently, the compactness argument has to control both the localization of the kinetic energy and the decay of the Riesz cross term. This theorem is one of the key analytical ingredients in the proof of Theorem~\ref{thm:existence}.

	Having identified the sharp existence threshold, we return to the main variational problem and ask what happens to ground states as
	\[
	K\nearrow \mathcal{K}^{(N)}.
	\]
	This is the second main theme of the paper. In the local mass-critical Fermi model, Guo and Li \cite{GuoLi} proved that ground states exist exactly below a critical interaction strength and, as the critical value is approached, concentrate at the flattest minimum points of the trapping potential. Their analysis shows that the critical existence threshold and the blow-up behavior of minimizing states should be viewed as two parts of the same variational problem.
	
	Our next theorem gives the general critical concentration behavior under \eqref{V1}.
	
	\begin{theorem}\label{thm:concentration}
		Assume that $V$ satisfies \eqref{V1}, and let
		\[
		\gamma_n=\sum_{i=1}^{m}|u_i^n\rangle\langle u_i^n|
		\]
		be a minimizer of $e_{K_n}(N)$, where
		$\lfloor N/2 \rfloor+1\leq m\leq N$, $K_n\nearrow \mathcal{K}^{(N)}$, and
		$\{u_i^n\}_{i=1}^{m}\subset\mathcal H$ satisfies \eqref{eq:EL}.
		Set
		\[
		\varepsilon_n^{-2}:=\Tr(-\Delta\gamma_n).
		\]
		Then the following statements hold.
		
		\begin{enumerate}[label=\textup{(\roman*)},leftmargin=2.2em]
			\item
			\begin{equation}
				\varepsilon_n\rightarrow 0
				\qquad\text{as }K_n\nearrow \mathcal{K}^{(N)}.
				\label{3:e}
			\end{equation}
			
			\item
			There exist a sequence $\{y_{\varepsilon_n}\}\subset\mathbb R^3$ such that, defining
			\begin{equation}
				\bar w_i^n(x)
				:=\varepsilon_n^{3/2}
				u_i^n(\varepsilon_n x+\varepsilon_n y_{\varepsilon_n}),
				\qquad
				\bar\gamma_n:=\sum_{i=1}^{m}
				|\bar w_i^n\rangle\langle\bar w_i^n|,
				\qquad
				\rho_{\bar\gamma_n}:=\sum_{i=1}^{m}|\bar w_i^n|^2,
				\label{3:2}
			\end{equation}
			and $\bar\gamma_n$ be a optimizer of (\ref{eq:mainvar}) for each $K_n\nearrow \mathcal{K}^{(N)}$, we have
			\[
			\Tr(-\Delta\bar\gamma_n)=1,
			\qquad
			\int_{\mathbb R^3}
			(\rho_{\bar\gamma_n}*|x|^{-2})\rho_{\bar\gamma_n}\,dx
			\rightarrow \frac{1}{\mathcal{K}^{(N)}}, \qquad \bar\gamma_{n}\overset{\star}\rightharpoonup \bar\gamma \quad \text{in}\ \mathcal{S}^{1},
			\]
			where $\bar\ga:=\sum_{i=1}^M|\bar w_i\ra\la \bar w_i| (M\leq m)$ is a optimizer of $\mathcal{K}^{(N)}$ defined by \eqref{eq:critical}.
			
			\item
			Let
			\[
			\bar x_n:=\varepsilon_n y_{\varepsilon_n},
			\qquad
			\mathcal A:=\{x\in\mathbb R^3:V(x)=0\}.
			\]
			Then
			\begin{equation}
				\lim_{K_n\nearrow \mathcal{K}^{(N)}}
				\dist(\bar x_n,\mathcal A)=0.
				\label{03-1}
			\end{equation}
			Moreover, when $M=m$, we get $\bar\ga_n\rightarrow \bar\ga$ in $\mathcal{S}^1\cap \mathcal{S}^\infty$, and up to a subsequence, there exists a point $x_0\in\mathcal{A}$ such that
			\begin{equation}\label{3-1}
				\bar x_n\stackrel{n}{\longrightarrow}
				x_0\ \ \hbox{and}\ \ \bar w_i^{n}(x):=\va_{n}^{\f32}u_i^{n}(\va_{n} x+\bar x_n)\stackrel{n}{\longrightarrow} \bar w_i(x)
			\end{equation}
			strongly in $H^1(\R^3)$, for all $i=1, \cdots, M$.
		\end{enumerate}
	\end{theorem}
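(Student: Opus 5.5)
We argue as in the blow-up analysis of Guo and Li \cite{GuoLi}, now with the finite-rank Hartree GNS inequality of Theorem~\ref{thm:GNS} (case $\alpha=2$, $q=\infty$) in place of the scalar one.

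\textbf{Step 1: energy upper bound.} We first show $e_{K_n}(N)\to 0$. For a lower bound, note $\|\gamma\|_{\mathcal S^\infty}\le1$ on $\Upsilon_N$. Then \eqref{eq:critical} gives
\[
E_K(\gamma)\ge \Bigl(1-\tfrac{K}{\mathcal K^{(N)}}\Bigr)\Tr(-\Delta\gamma)+\int V\rho_\gamma\ge0 .
\]
For the upper bound, take an optimizer $\gamma^*=\sum_{i}|w_i\rangle\langle w_i|$ of $\mathcal K^{(N)}$ from Theorem~\ref{thm:GNS}. We may take its occupation numbers equal to $1$; the scaling/normalization step needs justification, and otherwise we use Theorem~\ref{thm:existence}(3)'s trial states. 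Fix $x_1\in\mathcal A$ and a cutoff $\varphi$. The trial state is built from $\tau^{3/2}\varphi(x-x_1)w_i(\tau(x-x_1))$, re-orthonormalized by Gram--Schmidt; the exponential decay of the $w_i$ makes the errors $O(e^{-c\tau})$. This gives
\[
E_{K_n}\le (\mathcal K^{(N)}-K_n)\tau^2/\mathcal K^{(N)}+o(1)\quad\text{as }\tau\to\infty .
\]
Choosing $\tau$ suitably yields $e_{K_n}(N)\to0$.

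\textbf{Step 2: blow-up, part (i).} Suppose $\varepsilon_n\not\to0$. Then $\Tr(-\Delta\gamma_n)$ and $\int V\rho_{\gamma_n}$ are bounded, and the compact embedding $\mathcal H\hookrightarrow L^2$ gives strong convergence of each $u_i^n$ in $L^2$. Since the Hartree term is continuous under $H^1$-bounded, $L^2$-strongly convergent sequences (Hardy--Littlewood--Sobolev with interpolation), the limit $\gamma$ is orthonormal of rank $m$ and satisfies $E_{\mathcal K^{(N)}}(\gamma)\le0$. On the other hand, $E_{\mathcal K^{(N)}}(\gamma)\ge\int V\rho_\gamma>0$ because $\rho_\gamma\not\equiv0$ and $V>0$ a.e.\ off $\mathcal A$. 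If $\mathcal A$ has positive measure, we instead use that $e_{\mathcal K^{(N)}}(N)=0$ is not attained (Theorem~\ref{thm:existence}(3)). Either way we get a contradiction, so $\varepsilon_n\to0$.

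Moreover $0\le E_{K_n}(\gamma_n)\to0$ forces
\[
\varepsilon_n^2\int(\rho_{\gamma_n}*|x|^{-2})\rho_{\gamma_n}\to 1/\mathcal K^{(N)}, \qquad \int V\rho_{\gamma_n}\to0 .
\]

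\textbf{Step 3: profile, part (ii).} After rescaling, $\bar\gamma_n$ is a minimizing sequence for $\mathcal K^{(N)}=\mathcal K^{(N)}_{2,\infty}$ with $\|\bar\gamma_n\|_{\mathcal S^\infty}=1$. Note that $q=\infty=q_2^*$ is the endpoint, so item (i) of Theorem~\ref{thm:GNS} does not apply directly; we run its concentration-compactness argument instead. Vanishing is excluded because the Hartree term stays positive (Lieb's lemma applied to $\rho_{\bar\gamma_n}$, which is bounded in $L^1\cap L^3$). This yields translations $y_{\varepsilon_n}$ with $\bar\gamma_n\overset{\star}\rightharpoonup\bar\gamma\ne0$.

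To see that $\bar\gamma$ is optimal, split into the weak limit and a remainder. Kinetic energy splits exactly. The Hartree term splits up to cross terms, which vanish by the decay of the Riesz kernel, and positivity of the kernel gives superadditivity of the ratio. Since the $\mathcal S^\infty$-norm of each piece is at most $1$, dichotomy is excluded unless $\bar\gamma$ itself attains $\mathcal K^{(N)}$. We must also rule out loss of occupation below $1$ becoming favorable, which uses the homogeneity in $\|\gamma\|_{\mathcal S^\infty}$. Thus $\bar\gamma=\sum_{i\le M}|\bar w_i\rangle\langle\bar w_i|$ is an optimizer.

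\textbf{Step 4: location, part (iii).} From $\int V\rho_{\gamma_n}=\int V(\varepsilon_nx+\bar x_n)\rho_{\bar\gamma_n}\to0$ and local nonvanishing of $\rho_{\bar\gamma_n}$ on a fixed ball, Fatou's lemma gives $V(\bar x_n)\to0$, hence $\dist(\bar x_n,\mathcal A)\to0$. Since $V\to\infty$ at infinity, $\bar x_n$ is bounded, so a subsequence converges to some $x_0\in\mathcal A$.

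When $M=m$ there is no mass loss: $\Tr\bar\gamma_n=m=\Tr\bar\gamma$. Weak convergence plus norm convergence gives $L^2$-strong convergence of the orbitals, after diagonalizing $\bar\gamma_n$ consistently. Because $\bar\gamma$ is optimal, kinetic energies also converge, $\Tr(-\Delta\bar\gamma)=1$, which upgrades the convergence to strong $H^1$.

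\textbf{Main obstacle.} We expect Step 3 to be hardest: ruling out dichotomy at the endpoint $q=\infty$, where the orthogonality constraints interact with the nonlocal cross terms. Handling it requires the strict binding-type inequalities behind Theorem~\ref{thm:GNS} together with the rank restriction $m\ge\lfloor N/2\rfloor+1$ from Theorem~\ref{thm:existence}(2).
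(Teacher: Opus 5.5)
Your proposal is correct and follows essentially the same route as the paper: (i) via compactness of $\mathcal H\hookrightarrow L^2$ and non-attainment of $e_{\mathcal K^{(N)}}(N)=0$; (ii) via the Hoffmann-Ostenhof bound, Lions-type non-vanishing, and a splitting into weak limit plus remainder, each with $\mathcal S^\infty$-norm at most $1$, which forces $\bar\gamma$ to be an optimizer with $\|\bar\gamma\|_{\mathcal S^\infty}=1$; and (iii) via the Fatou argument on $\int_{\R^3}V(\varepsilon_nx+\bar x_n)\rho_{\bar\gamma_n}\,dx\to0$ together with the absence of mass loss when $M=m$. One remark: excluding a second bubble (your ``main obstacle'') is not needed for the stated conclusions, since for $M=m$ you get $\Tr(-\Delta\bar\gamma)=1$ directly from optimality and $L^1$-convergence of the densities; the paper does exclude it, but the key input there is that every optimizer of $\mathcal K^{(N)}$ has rank at least $\lfloor N/2\rfloor+1$ (from $\mathcal K^{(2M)}<\mathcal K^{(M)}$), so two bubbles would carry trace exceeding $N\ge m$, rather than the lower bound on $m$.
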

	
	Theorem~\ref{thm:concentration} gives a general description of the critical profile: the kinetic energy diverges, the rescaled states converge to an optimizer of the sharp inequality, and concentration takes place near the zero set of the trap. It does not yet determine the precise rate of concentration or, when the zero set is degenerate, which part of it is selected. These finer questions depend on the geometry of $V$.
	
	The influence of the trapping geometry on critical concentration has been studied extensively in related nonlinear problems. For attractive Bose--Einstein condensates, Guo and Seiringer \cite{GuoSeiringer} analyzed mass concentration near criticality. Guo, Zeng and Zhou \cite{GuoZengZhou} treated ring-shaped trapping potentials, while Guo, Wang, Zeng and Zhou \cite{GuoWangZengZhou} studied multi-well potentials. Together with the fermionic concentration result of Guo and Li \cite{GuoLi}, these works suggest that the next-order asymptotics should distinguish isolated polynomial minima, ring-shaped minima, and traps whose zero set has nonempty interior.
	
	Motivated by those paper, we impose three more specific assumptions on $V$. 
	
	(I) We first focus on potentials of polynomial type, and take 
	\begin{equation*}
		\tag{V2}\label{V2}
		V(x)=g(x)\prod_{j=1}^{\ell}|x-x_j|^{p_j},
		\qquad C^{-1}<g(x)<C,
	\end{equation*}
	where $x_j\ne x_k$ for $j\ne k$, $g\in C_{\rm loc}^{\kappa}(\R^3)$ for some $\kappa\in(0,1)$, and the limits of $g$ at the $x_j$ exist. Put
	\[
	p:=\max_{1\le j\le\ell}p_j,
	\qquad
	\mathcal Z:=\{x\in\R^3:V(x)=0\}=\{x_1,\cdots,x_\ell\},
	\]
	and
	\begin{equation}
		\Lambda_1:=\{x_j\in\mathcal Z:\alpha_j=\alpha_0\},
		\qquad
		\alpha_0:=\min_{1\le j\le\ell}\alpha_j,
		\qquad
		\alpha_j:=\lim_{x\to x_j}\frac{V(x)}{|x-x_j|^p}.
		\label{eq:lambda1}
	\end{equation}
	Thus $\Lambda_1$ contains the effective flattest minima at the blow-up scale.
	
	(II) We next restrict our attention to the so-called ring-shaped potential, namely
	\begin{equation*}
		\tag{V3}\label{V3}
		V(x)=(|x|-A)^2,\qquad A>0,
	\end{equation*}
	whose zero set is
	\[
	\Lambda_2:=\{x\in\R^3:|x|=A\}.
	\]
	
	(III) We finally study the completely degenerate case, and assume that
	\begin{equation*}
		\tag{V4}\label{V4}
		\overline\Omega=\{x\in\R^3:V(x)=0\},
		\qquad
		\Omega=\bigcup_{j=1}^{m_0}\Omega_j,
	\end{equation*}
	where $\Omega_1,\cdots,\Omega_{m_0}$ are mutually disjoint bounded domains and $V>0$ on $\R^3\setminus\overline\Omega$. Denote
	\begin{equation*}
		R_j:=\max_{x\in\Omega_j}\dist(x,\partial\Omega_j),
		\qquad
		R:=\max_{1\le j\le m_0}R_j,
	\end{equation*}
	and let $\mu_1<\cdots\le\mu_M<0$ be the occupied negative eigenvalues of the limiting effective operator. The relevant components satisfy the geometric condition
	\[
	\Lambda_3
	:=\Big\{\Omega_j:\ R_j\ge\sqrt{\frac{\mu_M}{\mu_1}}\,R\Big\}.
	\]
	The assumptions \eqref{V2}--\eqref{V4} correspond to different geometries of the zero set of V. For \eqref{V2}, the zeros are isolated and the main issue is to determine the selected minimum through the local flatness of V. For \eqref{V3}, the zero set is a sphere, so a finer expansion is needed to identify the concentration scale. For \eqref{V4}, the zero set has nonempty interior, and the asymptotics are governed instead by the decay of the limiting profile and the distance to the boundary. Accordingly, the asymptotic behavior of minimizers depends essentially on the geometry of the zero set of V.

	Under these assumptions, the general concentration statement can be sharpened to precise energy and spatial asymptotics.
	
	\begin{theorem}\label{thm:rates}
		Let $K_n\nearrow \mathcal{K}^{(N)}$ and assume the hypotheses of Theorem~\ref{thm:concentration}. Under any of \eqref{V2}--\eqref{V4}, for different $x_0$, $\bar w_i^{n}(x)\rightarrow \bar w_i(x)$ in $H^1(\R^3)$ to an optimizer of $\mathcal{K}^{(N)}$. More precisely:
		\begin{enumerate}[label=\textup{(\roman*)},leftmargin=2.2em]
			\item Suppose that $V$ satisfies \eqref{V2}. Then
			\begin{equation*}
				e_{K_n}(N)
				=(1+o(1))\frac{p+2}{p\mathcal{K}^{(N)}}
				M_1^{-\frac{2}{p+2}}
				\bigl(\mathcal{K}^{(N)}-K_n\bigr)^{\frac{p}{p+2}}
			\end{equation*}
			as $K_n\nearrow \mathcal{K}^{(N)}$, where $\alpha_0>0$ is defined in \eqref{eq:lambda1} and
			$$M_1=\frac{2 }{ \al_0 p\mathcal{K}^{(N)}\inte |x+\bar x|^{p}\rho_\ga dx }.$$
			Moreover,
			\[
			\varepsilon_n
			=\Tr(-\Delta\gamma_n)^{-1/2}
			=(1+o(1))\left[M_1\bigl(\mathcal{K}^{(N)}-K_n\bigr)\right]^{\frac1{p+2}},
			\]
			
			Furthermore, after passing to a subsequence, there exist
		$x_0\in\Lambda_1$ and $\bar x\in\mathbb R^3$ such that
		\[
		\frac{\bar x_n-x_0}{\varepsilon_n}\to\bar x,
		\qquad
		\bar w_i^n(x)
		:=\varepsilon_n^{3/2}
		u_i^n(\varepsilon_n x+\bar x_n)
		\to\bar w_i(x)
		\quad\text{strongly in }H^1(\mathbb R^3),
		\]
		here and in what follows, $\gamma_n$, $\bar x_n$, $\bar w_i^n$, and $\bar w_i$
		are as introduced in Theorem~\ref{thm:concentration}.
			\item Suppose that $V$ satisfies \eqref{V3}, 
			without loss of generality, after a rotation, assume that 
			\[
			x_0=x_A:=(A,0,0)\in\Lambda_2.
			\]
			Writing $x=(x_1,x_2,x_3)\in\mathbb R^3$, define
			\[
			M_2
			:=
			\int_{\mathbb R^3}x_1^2\rho_{\bar\gamma}(x)\,dx
			-
			\frac{
				\left(\int_{\mathbb R^3}x_1\rho_{\bar\gamma}(x)\,dx\right)^2
			}{
				\int_{\mathbb R^3}\rho_{\bar\gamma}(x)\,dx
			}.
			\]
			Then
			\[
			e_{K_n}(N)
			=(2+o(1))\sqrt{\frac{M_2}{\mathcal{K}^{(N)}}}\bigl(\mathcal{K}^{(N)}-K_n\bigr)^{1/2}
			,
			\]
			and
			\[
			\varepsilon_n
			=(1+o(1))\left(\frac{\mathcal{K}^{(N)}-K_n}{\mathcal{K}^{(N)}M_2}\right)^{1/4}.
			\]
			Furthermore, for some  $\bar x\in\R^3$,
			\[
			\frac{\bar x_{n}-x_A}{\varepsilon_n}\rightarrow\bar x,
			\qquad
			\bar{w}_i^n(x):=\varepsilon_n^{3/2}u_i^n(\varepsilon_n x+\bar x_{n})
			\rightarrow \bar{w}_i(x)
			\quad\text{strongly in }H^1(\R^3).
			\]
			\item Suppose that $V$ satisfies \eqref{V4}. Then, as $K_n\nearrow \mathcal{K}^{(N)}$,
			\[
			\frac{1+o(1)}{4|\mu_1|R_0^2\mathcal{K}^{(N)}}
			\bigl(\mathcal{K}^{(N)}-K_n\bigr)|\ln(\mathcal{K}^{(N)}-K_n)|^2
			\le e_{K_n}(N)
			\]
			\[
			\le
			\frac{1+o(1)}{4|\mu_M|R^2\mathcal{K}^{(N)}}
			\bigl(\mathcal{K}^{(N)}-K_n\bigr)|\ln(\mathcal{K}^{(N)}-K_n)|^2.
			\]
			
			More precisely, 
			after passing to a subsequence, there exist a component
			$\Omega_{j_0}\in\Lambda_3$, a point
			$x_0\in\overline{\Omega}_{j_0}$, and concentration points
			$\bar x_{n}$ such that
			\[
			\bar x_{n}\rightarrow x_0 ,
			\]
			and
			\begin{equation}
				R_0
				:=
				\lim_{n\to\infty}
				\dist(\bar x_{n},\partial\Omega_{j_0})
				\geq
				\sqrt{\frac{\mu_M}{\mu_1}}\,R .
				\label{1:R0}
			\end{equation}
			
			Moreover, 
			\[
			\varepsilon_n
			:=
			\Tr(-\Delta\gamma_n)^{-1/2}
			=
			(1+o(1))(C^*
			\bigl|\ln(\mathcal{K}^{(N)}-K_n)\bigr|)^{-1}
			,
			\] 
			and for some $\bar x\in\mathbb R^3$,
			\[
			\frac{\bar x_{n}-x_0}{\varepsilon_n}
			\rightarrow \bar x,\qquad
			\text{and}\qquad
			\bar{w}_i^n(x)
			:=
			\varepsilon_n^{3/2}
			u_i^n(\varepsilon_n x+\bar x_{n})
			\rightarrow
			\bar w_i(x)
			\]
			strongly in $H^1(\mathbb R^3)$ for every $i=1,\cdots,M$, 
			where
			$
			C^*
			\in
			\big(
			\frac{1}
			{2\sqrt{|\mu_1|}\,R_0},
			\frac{1}
			{2\sqrt{|\mu_M|}\,R}
			\big) .
			$
		\end{enumerate}
	\end{theorem}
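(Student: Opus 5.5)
The three cases share one skeleton: a sharp upper bound for $e_{K_n}(N)$ from a trial state, a matching lower bound from Theorem~\ref{thm:concentration}, and then an identification of $\varepsilon_n$ and of the concentration point from the near-equality.

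\textbf{Upper bound.} Take an optimizer $\bar\gamma=\sum_{i=1}^M|\bar w_i\rangle\langle\bar w_i|$ of $\mathcal K^{(N)}$, normalized so that $\Tr(-\Delta\bar\gamma)=1$ and $\int(\rho_{\bar\gamma}*|x|^{-2})\rho_{\bar\gamma}=1/\mathcal K^{(N)}$; it exists by Theorem~\ref{thm:GNS}. Use the trial state
\[
\gamma_\tau=\sum_i|\varphi\, w_i^\tau\rangle\langle \varphi\, w_i^\tau|,\qquad w_i^\tau(x)=\tau^{3/2}\bar w_i(\tau(x-x_0-\bar x/\tau)),
\]
with a cutoff $\varphi$ and a Gram--Schmidt correction that costs only exponentially small errors. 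The exponential decay of the $\bar w_i$ comes from the Euler--Lagrange equation in Theorem~\ref{thm:GNS}(ii), whose eigenvalues satisfy $\mu_i<0$. This gives
\[
E_{K_n}(\gamma_\tau)\le \frac{\mathcal K^{(N)}-K_n}{\mathcal K^{(N)}}\tau^2+\int V\rho_{\gamma_\tau}+\text{error}.
\]
The potential term behaves differently in each case.
\begin{itemize}
\item Under \eqref{V2}, with $x_0\in\Lambda_1$, the potential term is $\tau^{-p}\alpha_0\int|x+\bar x|^p\rho_{\bar\gamma}(1+o(1))$. Optimizing in $\tau$ gives the stated $(\mathcal K^{(N)}-K_n)^{p/(p+2)}$ law with constant $M_1$.
\item Under \eqref{V3}, Taylor expansion gives $(|x|-A)^2\approx (x_1-A)^2$ near $x_A$. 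The potential term is then $\tau^{-2}\int(x_1+\bar x_1)^2\rho_{\bar\gamma}$. Minimizing over the shift $\bar x_1$ produces exactly the variance $M_2$, using $\int\rho_{\bar\gamma}=M$. Optimizing $\tau^2 a+\tau^{-2}M_2$ yields $2\sqrt{M_2 a}$.
\item Under \eqref{V4}, put $x_0$ at a point of $\Omega_j$ at distance $R$ from the boundary. The potential term is bounded by the tail of $\rho_{\bar\gamma}$ outside the ball of radius $\tau R$, which is $\lesssim e^{-2\sqrt{|\mu_M|}\tau R}$ (the slowest decay rate among occupied orbitals). Choosing $\tau\sim |\ln(\mathcal K^{(N)}-K_n)|/(2\sqrt{|\mu_M|}R)$ gives the upper bound with $|\ln|^2$.
\end{itemize}

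\textbf{Lower bound.} Write $E_{K_n}(\gamma_n)\ge \frac{\mathcal K^{(N)}-K_n}{\mathcal K^{(N)}}\varepsilon_n^{-2}+\int V\rho_{\gamma_n}$ using \eqref{eq:critical}. After rescaling, $\int V\rho_{\gamma_n}=\int V(\varepsilon_n x+\bar x_n)\rho_{\bar\gamma_n}$, and $\bar\gamma_n\to\bar\gamma$ by Theorem~\ref{thm:concentration}. Comparing with the upper bound, I first show that $(\bar x_n-x_0)/\varepsilon_n$ stays bounded; otherwise Fatou's lemma makes the potential term too large relative to the upper bound. Then, under \eqref{V2}, Fatou gives $\int V\rho_{\gamma_n}\ge(1+o(1))\varepsilon_n^p\alpha_j\int|x+\bar x|^p\rho_{\bar\gamma}$. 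Minimizing in $\varepsilon_n$ gives matching bounds, which forces $x_0\in\Lambda_1$ (only the maximal $p$ and minimal $\alpha_j$ can achieve the optimum) and forces $\varepsilon_n$ to equal the optimizing scale. Case \eqref{V3} is the same argument with quadratic expansion in the radial direction. The strong $H^1$ convergence then follows from the near-equality in the GNS quotient: the limit has full mass, so $M=m$ and Theorem~\ref{thm:concentration} applies.

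\textbf{Main obstacle: case \eqref{V4}.} The lower bound needs a pointwise lower decay estimate for $\rho_{\gamma_n}$ outside $\Omega$, of the form $\rho_{\bar\gamma_n}(x)\gtrsim e^{-(2+\delta)\sqrt{|\mu_1|}|x|}$. I would obtain it from comparison principles applied to \eqref{eq:EL}, using $\mu_i^n\varepsilon_n^2\to\mu_i$ and the fact that the nonlocal potential $\rho*|x|^{-2}$ decays. This yields $\int V\rho_{\gamma_n}\gtrsim e^{-2\sqrt{|\mu_1|}R_0/\varepsilon_n(1+o(1))}$, where $R_0=\lim\dist(\bar x_n,\partial\Omega_{j_0})$. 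Optimizing gives the lower bound with $R_0$ and $\mu_1$. Comparing it with the upper bound forces \eqref{1:R0}, hence $\Omega_{j_0}\in\Lambda_3$, and pins $\varepsilon_n|\ln(\mathcal K^{(N)}-K_n)|$ into the stated interval for $C^*$. Throughout, the Gram--Schmidt step and the Hartree cross terms are handled using exponential decay; this is routine but tedious.
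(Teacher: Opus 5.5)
Your outline follows the paper's proof closely. The common steps are: trial states built from the limiting optimizer $\bar\gamma$ and limiting shift $\bar x$ (these must be the subsequential limits, as in the paper's \eqref{4.12}, for the constants in (i) to match); Fatou's lemma once $(\bar x_n-x_0)/\varepsilon_n$ is shown to be bounded; minimization over the shift giving $M_2$ in (ii); and in (iii) a ball of radius $R$ inside $\Omega$ played against a lower exponential bound on the positive first orbital. In (iii) you are more careful than the paper. The paper invokes Proposition~\ref{decay}, which is proved only for the limiting equation \eqref{eq:effectiveA}, directly for the rescaled minimizers. What is actually needed is a comparison argument uniform in $n$, with rate $\sqrt{|\mu_1|}+o(1)$, as you write. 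For that lower bound the relevant inputs are $\bar w_1^n>0$, $\rho_{\bar\gamma_n}*|x|^{-2}\ge0$, and $\varepsilon_n^2V(\varepsilon_nx+\bar x_n)=O(\varepsilon_n^2)$ on $\{|x|\le 2r_2/\varepsilon_n\}$; the decay of the Hartree potential plays no role.

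The step that does not work as justified is ``the limit has full mass, so $M=m$'', which you attribute to near-equality in the GNS quotient. Near-equality, together with the rank count of Theorem~\ref{thm:concentration}, only identifies $\bar\gamma$ as an optimizer and rules out loss of kinetic energy. It cannot see mass spreading at a scale $L_n\to\infty$ in the rescaled variables. For example, take $M$ orbitals converging to an optimizer plus one extra orthonormalized orbital $L_n^{-3/2}f(\cdot/L_n)$. This changes the kinetic and Hartree terms only by $O(L_n^{-2})$, so the quotient still tends to $\mathcal K^{(N)}$ while $\Tr\bar\gamma=M<m=\Tr\bar\gamma_n$.

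Since Theorem~\ref{thm:concentration}(iii) gives strong $H^1$ convergence only when $M=m$, full mass has to come from the trap. This is the third alternative in the paper's claim \eqref{m=M}, which the paper obtains from the potential energy. Under \eqref{V2} you can argue as follows.
\begin{itemize}
\item Your bounds $\frac{\mathcal K^{(N)}-K_n}{\mathcal K^{(N)}}\varepsilon_n^{-2}\le e_{K_n}(N)\le C(\mathcal K^{(N)}-K_n)^{p/(p+2)}$ give $\Tr(V\gamma_n)\le C\varepsilon_n^{p}$.
\item The IMS-localized part $\gamma_n^{\mathrm{out}}$ outside a large rescaled ball has mass bounded below by a positive constant. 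By \eqref{3.30} its kinetic energy is $o(\varepsilon_n^{-2})$.
\item Use $\Tr\big((-\Delta+tV)\gamma_n^{\mathrm{out}}\big)\ge c\,t^{2/(p+2)}\Tr\gamma_n^{\mathrm{out}}$ with $t=s\varepsilon_n^{-(p+2)}$. This gives $\Tr(V\gamma_n^{\mathrm{out}})\ge c\,s^{-p/(p+2)}\varepsilon_n^{p}$ for large $n$, which contradicts the upper bound once $s$ is small.
\end{itemize}
Case \eqref{V3} is the same with $t^{1/2}$ and $\varepsilon_n^2$. The energy and $\varepsilon_n$ asymptotics do not depend on this step, since Fatou and the rank-$M$ trial state work regardless; the $H^1$ statements do. Under \eqref{V4} the trap gives no such control inside $\Omega$. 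Neither your sketch nor the paper, which only says ``similar to (V2)'', settles $M=m$ there.
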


	Theorem \ref{thm:rates} shows that the geometry of the zero set of V gives rise to different concentration scales. The main difficulty in deriving the refined asymptotics is to strengthen the general concentration result to strong convergence of the rescaled minimizers, while simultaneously determining the concentration points and the precise blow-up scales. The analysis depends essentially on the structure of the zero set of V: isolated minima can be treated by local expansions of V; the spherical case requires a finer analysis due to the degeneracy of the minimum set; whereas, when V vanishes on bounded domains, the leading-order behavior is determined by the exponential decay of the limiting profiles and their distance to the boundary. More recently, Guo, Li and Wu \cite{GLW} obtained refined blow-up results for a local Fermi model with a ring-shaped trapping potential. In contrast, the present problem involves a nonlocal Hartree interaction, for which the convolution structure produces additional interaction terms under localization and makes the blow-up analysis substantially more delicate. Moreover, we treat several different trapping geometries, including the multi-well case, which, to the best of our knowledge, has not been considered previously in this nonlocal fermionic setting.

	The main purpose of this paper is to study the existence and asymptotic behavior of minimizers for \eqref{eq:mainvar}. The finite-rank GNS problem in Theorem~\ref{thm:GNS} arises as a key ingredient in the proof of Theorem~\ref{thm:existence}, while Theorems~\ref{thm:concentration} and \ref{thm:rates} describe the concentration of minimizers as $K\nearrow \mathcal{K}^{(N)}$ under general and more specific assumptions on $V$, respectively.
	
	The paper is organized as follows. Section~\ref{sec:2} is devoted to the finite-rank Hartree GNS inequality. Section~\ref{sec:3} proves the existence, rank and non-existence result for $e_K(N)$. Section~\ref{sec:4} establishes the general critical concentration theorem.  Section~\ref{sec:5} derives the refined energy estimates and concentration behavior under \eqref{V2}--\eqref{V4}. The regularity and decay properties of the limiting optimizers, which are needed in the paper, are established in Appendix~\ref{sec:A}. For completeness, a dual Hartree-type Lieb--Thirring inequality is also included in Appendix~\ref{sec:B}.

	\section{Finite rank Hartree Gagliardo--Nirenberg--Sobolev inequality}\label{sec:2}
	In this section, we intend to establish the finite-rank Hartree
	Gagliardo--Nirenberg--Sobolev inequality stated in Theorem \ref{thm:GNS}, which use concentration-compactness principle to finish the proof.
	The main point is to prove the existence and compactness of optimizers
	in the presence of the nonlocal Hartree interaction.
	
	\begin{proof}[Proof of Theorem \ref{thm:GNS}]
		At first, we will prove that there exist a minimizer of \eqref{eq:GNS}. Assume there exists a minimizing sequence $\{\gamma_n\}$, define 
		$$ \gamma_n:=\sum_{i=1}^N a_{in}|u_i\ra\la u_i|,\quad a_{in}\geq0, \quad\mathrm{rank}(\ga_n)\leq N \ \mbox{for all} \ n,$$
		where $a_{in}=0$ when $i>\mathrm{rank}(\ga_n)$, which provided that $\mathrm{rank}(\ga_n)\leq N$.
		Normalizing the sequence such that
		\begin{equation}\label{norma1}
			\Tr(-\triangle\gamma_n)=1,\quad \|\gamma_n\|_{\mathcal{S}^q}=1.
		\end{equation}
		Then, denote $\rho_n:=\rho_{\gamma_n}$,
		\begin{equation*}
			\int_{\R^3}\int_{\R^3}\dfrac{\rho_n(x)\rho_n(y)}{|x-y|^\alpha}dxdy\rightarrow(\mathcal{K}^{(N)}_{\alpha,q})^{-\frac{\alpha}{2}}.
		\end{equation*}
		It follow from $\|\gamma_n\|\leq\|\gamma_n\|_{\mathcal{S}^q}=1$ that
		\begin{equation*}
			\int_{\R^3}\rho_n(x)dx=\|\gamma\|_{S^1}=\sum_{i=1}^N a_{in}\leq N.
		\end{equation*}
		In addition, we get from Hoffmann-Ostenhof inequality,
		\begin{equation*}
			1=\Tr(-\triangle\gamma_n)\geq \int_{\R^3}|\nabla\sqrt{\rho_n(x)}|^2dx.
		\end{equation*}
		Combining the above estimates, $\{\sqrt{\rho_n}\}$ is bounded in $H^1(\R^3)$, then there exist a subsequence, still denote $\{\sqrt{\rho_n}\}$,
		$$\sqrt{\rho_n}\rightharpoonup \sqrt{\rho}\quad \mbox{in}\quad H^1(\R^3),\  \ \mbox{and}\ \ \sqrt{\rho_n}\rightharpoonup \sqrt{\rho} \quad \mbox{in}\quad L^p(\R^3), \quad\mbox{where}\quad 2\leq p<6.$$
		If $\rho=0$, by vanishing lemma, we have $\sqrt{\rho_n}\rightarrow \sqrt{\rho}\equiv0 \ \mbox{in}\ L^p(\R^3), \ 2< p<6$.
		$$(\mathcal{K}^{(N)}_{\alpha,q})^{-\frac{\alpha}{2}}\leftarrow \int_{\R^3}\int_{\R^3}\dfrac{\rho_n(x)\rho_n(y)}{|x-y|^\alpha}dxdy\leq C\|\rho\|_{\frac{6-\alpha}{6}}^2 \rightarrow0,$$
		which is a contradiction, therefore $\rho\neq 0$. Applying Lion's method, there exists $\{R_n\}$ satisfying $R_n\rightarrow\infty\  \mbox{as}\ n\rightarrow\infty$, so that
		\begin{equation}\label{A3}
			\lim_{n\rightarrow\infty}\int_{|x|\leq R_n}\rho_n(x)dx=\int_{\R^3}\rho(x)dx, \quad \mbox{and}\quad\lim_{n\rightarrow\infty}\int_{R_n\leq|x|\leq 6R_n}\rho_n(x)dx=0.
		\end{equation}
		Set cutting function $\chi(x)\in C_c^\infty(\R^3,[0,1])$ satisfying
		$$\chi(x)\equiv 1, \ \forall\ |x|\leq1,\ \chi(x)\equiv 0, \ \forall\ |x|\geq2.$$
		Defined $\chi_n(x):=\chi(\frac{x}{R_n}),\ \eta_n(x):=\sqrt{1-\chi_n^2}$. Following the IMS formula \cite[Theorem 3.2]{Cycon} and Fatou's lemma \cite[Theorem 2.35]{Simon}, we derive that
		\begin{equation}\label{eq-trace1}
			\begin{aligned}
				\mathrm{Tr}(-\Delta\gamma_n)&=\mathrm{Tr}(-\Delta\chi_n\gamma_n\chi_n)
				+\mathrm{Tr}(-\Delta\eta_n\gamma_n\eta_n)
				+o_n(1)\\
				&\geq \mathrm{Tr}(-\Delta \gamma)+\mathrm{Tr}(-\Delta\eta_n\gamma_n\eta_n)+o(1),
			\end{aligned}
		\end{equation}
		and
		\begin{equation*}
			\begin{aligned}
				\int_{\R^3}(\rho_n*|x|^{-\alpha})\rho_n dx&=\int_{\R^3}(\chi_n^2\rho_n*|x|^{-\alpha})\chi_n^2\rho_n dx+\int_{\R^3}(\eta_n^2\rho_n*|x|^{-\alpha})\eta_n^2\rho_n dx\\
				&+2\int_{\R^3}(\chi_n^2\rho_n*|x|^{-\alpha})\eta_n^2\rho_n dx.
			\end{aligned}		
		\end{equation*}
		Defined $\tilde{\chi}_n(x)=\tilde{\chi}(\frac{x}{3R_n}),\ \tilde{\eta}_n(x)=\sqrt{1-\tilde{\chi}_n^2}$, then
		$\eta_n^2(x)=(\eta_n^2(x)-\tilde{\eta}_n^2(x))+\tilde{\eta}_n^2(x)$. We have
		\begin{equation*}
			\chi_n^2(x)|x-y|^{-\alpha}\tilde{\eta}_n^2(y)\leq\dfrac{\chi_n^2\tilde{\eta}_n^2}{||x|-|y||^{\alpha}}\leq\dfrac{1}{R_n^\alpha},
		\end{equation*}
		and
		\begin{equation*}
			\chi_n^2(x)|x-y|^{-\alpha}(\eta_n^2(y)-\tilde{\eta}_n^2(y))\leq I_{R_n<|y|<6R_n}|x-y|^{-\alpha}.
		\end{equation*}
		 Combining \eqref{A3} with the two estimates above, we obtain that
		$$\int_{\R^3}(\chi_n^2\rho_n*|x|^{-\alpha})\eta_n^2\rho_n dx\leq \frac{\|\rho_n\|_1^2}{R_n^\alpha}+\int_{\R^3}\int_{R_n\leq|x|\leq 6R_n}\dfrac{\rho_n(x)\rho_n(y)}{|x-y|^\alpha}dxdy=o_n(1).$$
		which means
		\begin{equation}\label{rnx*}
			\int_{\R^3}(\rho_n*|x|^{-\alpha})\rho_n dx=\int_{\R^3}(\chi_n^2\rho_n*|x|^{-\alpha})\chi_n^2\rho_ndx+\int_{\R^3}(\eta_n^2\rho_n*|x|^{-\alpha})\eta_n^2\rho_ndx+o_n(1).		
		\end{equation}
		Next, we will prove the two cases: (a) $0<\alpha<2$ and  $1\le q\le \frac{4-\alpha}{2-\alpha}$; (b) $2\leq\alpha<3$ and  $ q\in [1,\infty]$.
		
		For Case (a), 
		from $\|\gamma_n\|_{\mathcal{S}^q}=\Tr(-\Delta\gamma_n)=1$, we get
		\begin{equation}\label{2.10}
			1=\|\gamma_n\|_{\mathcal{S}^q}^{\frac{4-\alpha}{2}}(\Tr(-\Delta\gamma_n))^{\frac{\alpha}{2}}=\big(\Tr(\gamma_n^q)\big)^{\frac{4-\alpha}{2q}}\big(\Tr(-\Delta\gamma_n)\big)^{\frac{\alpha}{2}}.
		\end{equation}
		Recall that if $1<q<\infty$, then,
		\begin{equation}\label{2.11}
			\begin{aligned}
				\Tr(\gamma_n^q)&=\Tr\Big((\chi_n^2+\eta_n^2)\gamma_n^q\Big)\ge \Tr(\chi_n^{2q}\gamma_n^q) +\Tr(\eta_n^{2q}\gamma_n^q) = \Tr(\chi_n^q\gamma_n^q\chi_n^q)+\Tr(\eta_n^q\gamma_n^q\eta_n^q)\\
				&\ge \Tr\big((\chi_n\gamma_n\chi_n)^q\big)+\Tr\big((\eta_n\gamma_n\eta_n)^q\big)\ge \Tr(\gamma^q)+\Tr\big((\eta_n\gamma_n\eta_n)^q\big)+o_n(1).
			\end{aligned}
		\end{equation}
		Moreover, since  for $\alpha+\theta\ge 1$, we have that
		\begin{equation*}
			a^\alpha c^\theta+b^\alpha d^\theta\le (a+b)^\alpha(c+d)^\theta \text{ for  any }a,b,c,d\geq0.
		\end{equation*}
		Since $0<\alpha<2$ and  $1\le q\le \frac{4-\alpha}{2-\alpha}$, we deduce from the definition of $\mathcal{K}_{\alpha,q}^{(N)}$, \eqref{eq-trace1}, \eqref{rnx*} and  \eqref{2.11} that 
		\begin{equation*}
			\begin{aligned}
				1&\ge \big[\Tr(\gamma^q)\big]^{\frac{4-\alpha}{2q}}\big[\Tr(-\Delta\gamma)\big]^{\frac{\alpha}{2}}+\big[\Tr\big((\eta_n\gamma_n\eta_n)^q\big)\big]^{\frac{4-\alpha}{2q}}\big[\Tr(-\Delta\eta_n\gamma_n\eta_n)\big]^{\frac{\alpha}{2}}+o_n(1)\\
				&\ge [\Tr(\gamma^q)]^{\frac{4-\alpha}{2q}}[\Tr(-\Delta\gamma)]^{\frac{\alpha}{2}}+(\mathcal{K}_{\alpha,q}^{(N)})^{\frac{\alpha}{2}}\int_{\mathbb{R}^3}\big((\eta_n^2\rho_n)*{|x|^{-\alpha}}\big)\eta_n^2\rho_n dx+o_n(1)\\
				&\ge [\Tr(\gamma^q)]^{\frac{4-\alpha}{2q}}[\Tr(-\Delta\gamma)]^{\frac{\alpha}{2}}+1-(\mathcal{K}_{\alpha,q}^{(N)})^{\frac{\alpha}{2}}\int_{\mathbb{R}^3}\big((\chi_n^2\rho_n)*{|x|^{-\alpha}}\big)\chi_n^2\rho_n dx+o_n(1).
			\end{aligned}
		\end{equation*}
		Rearranging the inequality and letting $n\to \infty$, we get
		$$\mathcal{K}^{(N)}_{\alpha,q}\geq\f{\|\ga\|_{\mathcal{S}^q}^{\frac{4-\alpha}{\alpha}}\mathrm{Tr}(-\Delta\ga)}{(\inte(\rho_\ga*|x|^{-\alpha})\rho_\ga dx)^{\frac{2}{\alpha}}}.$$
		
		For Case (b), We infer from $\|\gamma_n\|_{\SQ}=1$ that $\|\gamma\|_{\SQ}\le 1$ and $\|\eta_n\gamma_n\eta_n\|_{\mathcal{S}^q}\le1$. Therefore
		\begin{equation*}
			\begin{aligned}
				1=\Big(\text{Tr}(-\Delta\gamma_n)\Big)^{\frac{\alpha}{2}} &\ge \Big(\text{Tr}(-\Delta\gamma)\Big)^{\frac{\alpha}{2}}+\Big(\text{Tr}(-\Delta\eta_n\gamma_n\eta_n)\Big)^{\frac{\alpha}{2}}+o_n(1)\\
				&\ge \|\gamma\|_{\mathcal{S}^q}^{\frac{4-\alpha}{2}}\Big(\text{Tr}(-\Delta\gamma)\Big)^{\frac{\alpha}{2}}+\|\eta_n\gamma_n\eta_n\|_{\mathcal{S}^q}^{\frac{4-\alpha}{2}}\Big(\text{Tr}(-\Delta\eta_n\gamma_n\eta_n)\Big)^{\frac{\alpha}{2}}+o_n(1)\\
				&\ge \|\gamma\|_{\mathcal{S}^q}^{\frac{4-\alpha}{2}}\Big(\text{Tr}(-\Delta\gamma)\Big)^{\frac{\alpha}{2}}+(\mathcal{K}_{\alpha,q}^{(N)})^{\frac{\alpha}{2}}\int_{\mathbb{R}^3}\Big((\eta_n^2\rho_n)*{|x|^{-\alpha}}\Big)\eta_n^2\rho_n dx+o_n(1)\\
				&\ge \|\gamma\|_{\mathcal{S}^q}^{\frac{4-\alpha}{2}}\Big(\text{Tr}(-\Delta\gamma)\Big)^{\frac{\alpha}{2}}+1-(\mathcal{K}_{\alpha,q}^{(N)})^{\frac{\alpha}{2}}\int_{\mathbb{R}^3}\Big((\chi_n^2\rho_n)*{|x|^{-\alpha}}\Big)\chi_n^2\rho_n dx+o_n(1).
			\end{aligned}
		\end{equation*}
		Combining with Case (a), 
		this implies that $\gamma\neq 0$ is an optimizer for $\mathcal{K}_{\alpha,q}^{(N)}$.
		
		\textbf{(i).} For $1\le q< \frac{4-\alpha}{(2-\alpha)_+}$, we prove that the minimizing sequence satisfying \eqref{norma1} is sequentially compact. Specifically, let $1\leq M:=\Rank \gamma\leq N$,  we intend to show that   $\|\gamma_n-\gamma\|_{\mathcal{S}^q}\overset{n}{\to}0$,
		furthermore, by rearranging the order of $\{i\}$ if necessary, there exists a subsequence such that 
		\begin{equation}\label{2.12}
			a_i:=\lim_{n\to\infty}a_{in}>0 \text{ and } u_{in}\overset{n}{\to}u_i \text{ in } H^1(\mathbb{R}^3) \text{ for  }1\le i\le M;\  a_i=0 \text{ for }M+1\le i\le N,
		\end{equation}
		and 
		\begin{equation}\label{2.13}
			\gamma=\sum_{i=1}^M a_i |u_i\rangle \langle u_i|,\ \ \text{where}\ (u_i,u_j)_{L^2}=\delta
			_{ij} \ \text{ for }\ i,j=1,2,\cdots, M.
		\end{equation}
		By \eqref{eq-trace1}, \eqref{2.10}, \eqref{2.11}, and H\"older inequality,
		\begin{equation}\label{2.14}
			\begin{aligned}
				1\ge& \Big(\Tr(\gamma^q)+\Tr((\eta_n\gamma_n\eta_n)^q)\Big)^{\frac{4-\alpha}{2q}}\Big(\Tr(-\Delta\gamma)+\Tr(-\Delta\eta_n\gamma_n\eta_n)\Big)^{\frac{\alpha}{2}}+o_n(1)\\
				=&\Big[\Big(\Tr(\gamma^q)+\Tr((\eta_n\gamma_n\eta_n)^q)\Big)^{\frac{4-\alpha}{4-\alpha+q\alpha}}\Big(\Tr(-\Delta\gamma)+\Tr(-\Delta\eta_n\gamma_n\eta_n)\Big)^{\frac{q\alpha}{4-\alpha+q\alpha}}\Big]^{\frac{4-\alpha+q\alpha}{2q}}+o_n(1)\\
				\geq&\Big\{\big[\Tr(\gamma^q)\big]^{\frac{4-\alpha}{4-\alpha+q\alpha}}\big[\Tr(-\Delta\gamma)\big]^{\frac{q\alpha}{4-\alpha+q\alpha}}
                \\
		&\quad+\big[\Tr((\eta_n\gamma_n\eta_n)^q)\big]^{\frac{4-\alpha}{4-\alpha+q\alpha}}\big[\Tr(-\Delta\eta_n\gamma_n\eta_n)\big]^{\frac{q\alpha}{4-\alpha+q\alpha}}\Big\}^{\frac{4-\alpha+q\alpha}{2q}}+o_n(1).
			\end{aligned}
		\end{equation}
		Next, we claim that 
		\begin{equation*}
			\lim_{n\to\infty}\|\eta_n\gamma_n\eta_n\|_{\mathcal{S}^q}=0.
		\end{equation*}
		Otherwise 
		\begin{equation}\label{2.16}
			\liminf_{n\to\infty}\|\eta_n\gamma_n\eta_n\|_{\mathcal{S}^q}>C_1>0.
		\end{equation}
		If $\Tr(-\Delta\eta_n\gamma_n\eta_n)\overset{n}\to0$, we then deduce from \eqref{eq:GNS} that 
		$$\int_{\mathbb{R}^3}\int_{\mathbb{R}^3}\frac{\eta_n^2\rho_n(x)\eta_n^2\rho_n(y)}{|x-y|^{\alpha}}dxdy\overset{n}\to0.$$
		It then follows from \eqref{rnx*} that 
		\begin{equation*}
			\lim_{n\to\infty}\int_{\mathbb{R}^3}\big(\rho_n*|x|^{-\alpha}\big)\rho_ndx=\int_{\mathbb{R}^3}\big(\rho_\gamma*|x|^{-\alpha}\big)\rho_\gamma dx=\big(\mathcal{K}_{\alpha,q}^{(N)}\big)^{-\frac{\alpha}{2}}.
		\end{equation*}
		Taking \eqref{2.16} into the first inequality in \eqref{2.14}, and using the GNS inequality \eqref{eq:GNS}, we get that, for some $C_2>0$,
		\begin{equation*}
			\begin{aligned}
				1&\geq \Big(\Tr(\gamma^q)+C_1^q\Big)^{\frac{4-\alpha}{2q}}\big[\Tr(-\Delta\gamma)\big]^{\frac{\alpha}{2}}\ge \big[\Tr(\gamma^q)\big]^{\frac{4-\alpha}{2q}}\big[\Tr(-\Delta\gamma)\big]^{\frac{\alpha}{2}}+C_2\\
				&\geq (\mathcal{K}_{\alpha,q}^{(N)})^{\frac{\alpha}{2}}\int_{\mathbb{R}^3}\big(\rho_\gamma*|x|^{-\alpha}\big)\rho_\gamma dx+C_2>1.
			\end{aligned} 
		\end{equation*}
		This leads to a contradiction. 
		If $\Tr(-\Delta\eta_n\gamma_n\eta_n)>C_3>0$ for some $C_3>0$.  Since  $\frac{4-\alpha+q\alpha}{2q}>1$, we  deduce from \eqref{2.14} and \eqref{2.16} that 
		\begin{equation*}
			\begin{aligned}
				1\ge& [\Tr(\gamma^q)]^{\frac{4-\alpha}{2q}}[\Tr(-\Delta\gamma]^{\frac{\alpha}{2}}+[\Tr((\eta_n\gamma_n\eta_n)^q)]^{\frac{4-\alpha}{2q}}[\Tr(-\Delta\eta_n\gamma_n\eta_n)]^{\frac{\alpha}{2}}+o_n(1)\\
				&+\frac{4-\alpha+q\alpha}{2q}[\Tr(\gamma^q)]^{\frac{4-\alpha}{2q}-\frac{4-\alpha}{4-\alpha+q\alpha}}[\Tr(-\Delta\gamma)]^{\frac{\alpha}{2}-\frac{q\alpha}{4-\alpha+q\alpha}}[\Tr((\eta_n\gamma_n\eta_n)^q)]^{\frac{4-\alpha}{4-\alpha+q\alpha}}[\Tr(-\Delta\eta_n\gamma_n\eta_n)]^{\frac{q\alpha}{4-\alpha+q\alpha}}\\
				\ge& (\mathcal{K}_{\alpha,q}^{(N)})^{\frac{\alpha}{2}}\int_{\mathbb{R}^3}\big(\rho_\gamma*|x|^{-\alpha}\big)\rho_\gamma dx+(\mathcal{K}_{\alpha,q}^{(N)})^{\frac{\alpha}{2}}\int_{\mathbb{R}^3}\big[(\eta_n^2\rho_n)*{|x|^{-\alpha}}\big]\eta_n^2\rho_n dx+o_n(1)\\
				&+\frac{4-\alpha+q\alpha}{2q}[\Tr(\gamma^q)]^{\frac{4-\alpha}{2q}-\frac{4-\alpha}{4-\alpha+q\alpha}}[\Tr(-\Delta\gamma)]^{\frac{\alpha}{2}-\frac{q\alpha}{4-\alpha+q\alpha}}[\Tr((\eta_n\gamma_n\eta_n)^q)]^{\frac{4-\alpha}{4-\alpha+q\alpha}}[\Tr(-\Delta\eta_n\gamma_n\eta_n)]^{\frac{q\alpha}{4-\alpha+q\alpha}}\\
				>&1,
			\end{aligned}
		\end{equation*}
		where we use \eqref{rnx*} in the last inequality. This indicates that \eqref{2.16} cannot hold, thus the claim is obtained. 
		This implies from GNS inequality
		that
		\begin{equation*}
			\int_{\mathbb{R}^3}\big[(\eta_n^2\rho_n)*{|x|^{-\alpha}}\big]\eta_n^2\rho_n dx=o_n(1)
		\end{equation*}
		and
		\begin{equation*}
			\int_{\mathbb{R}^3}\big(\rho_n*{|x|^{-\alpha}}\big)\rho_n dx=\int_{\mathbb{R}^3}\big(\rho_\gamma*|x|^{-\alpha}\big)\rho_\gamma dx+o_n(1).
		\end{equation*}
		As a consequence, we have  
		\begin{equation*}
			\begin{aligned}
				1=&\|\gamma_n\|_{\mathcal{S}^q}^{\frac{4-\alpha}{2}}(\Tr(-\Delta\gamma_n))^{\frac{\alpha}{2}}\ge \|\gamma\|_{\mathcal{S}^q}^{\frac{4-\alpha}{2}}(\Tr(-\Delta\gamma))^{\frac{\alpha}{2}}+o_n(1)\\
				\ge& (\mathcal{K}_{\alpha,q}^{(N)})^{\frac{\alpha}{2}}\int_{\mathbb{R}^3}\big(\rho_\gamma*|x|^{-\alpha}\big)\rho_\gamma dx+o_n(1)=1+o_n(1)
			\end{aligned}
		\end{equation*}
		and
		\begin{equation*}
			1=\Tr(\gamma_n^q)=\Tr(\gamma^q)+o_n(1)\ \text{and}\ 1=\Tr(-\Delta\gamma_n)=\Tr(-\Delta\gamma)+o_n(1).
		\end{equation*}
		It follows from Theorem 2.16 in \cite{Simon} that $\|\gamma_n-\gamma\|_{\mathcal{S}^q}\overset{n}\to 0$. In addition, from the definition of $\{\gamma_{n}\}$ and \eqref{norma1}, There exists a subsequence such that 
		\begin{equation*}
			a_i:=\lim_{n\to\infty}a_{in}\geq0 \text{ and } u_{in}\overset{n}{\rightharpoonup }u_i \text{ in } H^1(\mathbb{R}^3).
		\end{equation*}
		Thus, $\gamma^q$ can be expressed as 
		\begin{equation}\label{2.19}
			\gamma^q=\sum_{i=1}^N a_i^q |u_i\rangle \langle u_i|.
		\end{equation}
		Combining the above analysis, we have
		\begin{equation*}
			\begin{aligned}
				1=\Tr(\gamma_n^q)=\sum_{i=1}^N a_{in}^q\stackrel{n}\to\sum_{i=1}^N a_i^q=\Tr(\gamma^q)
				\leq \sum_{i=1}^N a_i^q \langle u_i|u_i\rangle \le 1.
			\end{aligned}
		\end{equation*}
		This implies that
		\begin{equation*}
			(u_i,u_i)_{L^2}=1 \text{ provided }a_i>0, \text{ for }\ i=1,2,\cdots,N. 
		\end{equation*}
		Recall $\Rank \gamma=\Rank \gamma^q=M$, rearranging the order of $\{i\}$ if necessary, we   obtain  that
		\begin{equation*}
			a_i>0 \ \text{ and }\ u_i\not=0 \ \text{ for }\ i=1,2,\cdots,M.
		\end{equation*}
		This combine with \eqref{2.19} gives that  $(u_i,u_i)_{L^2}=1$ for $i=1,2,\cdots,M$. This further implies 
		\begin{equation}\label{2.22}
			u_{in}\overset{n}\to u_i \text{ in } L^2(\mathbb{R}^3;\mathbb{C}),\ \text{for all }i=1,2,\cdots,M.
		\end{equation}
		Recall that $(u_{in},u_{jn})_{L^2}=\delta_{ij}$ for all $i,j=1,2,\cdots,N,$  and $n\in\mathbb{N}^+$, we thus deduce that
		\begin{equation*}
			(u_i,u_j)_{L^2}=\delta
			_{ij} \ \text{ for }\ i,j=1,2,\cdots, M.
		\end{equation*}
		We claim that if $M<N$, then 
		\begin{equation*}
			a_i=0 \ \text{for $i=M+1,\cdots, N$}.
		\end{equation*}
		For otherwise, if $a_{M+1}>0$, then $u_{M+1}\not=0$ .  This together with \eqref{2.22} indicates that 
		$(u_i,u_{M+1})_{L^2}=0$ for $i=1,2,\cdots, M$. It then follows from \eqref{2.19} that $\Rank \gamma\geq M+1$, which however contradicts that $\Rank \gamma^q=M$. This proves the claim. Together with \eqref{2.22} and the orthogonality established
		above, this yields \eqref{2.13}.  Moreover, recalling the above convergence of the Hartree term, we obtain
		\begin{equation*}
			\int_{\mathbb{R}^3}\Big(\rho_n*{|x|^{-\alpha}}\Big)\rho_n dx\overset{n}\to \int_{\mathbb{R}^3}\Big(\rho_\gamma*{|x|^{-\alpha}}\Big)\rho_\gamma dx={\big(\mathcal{K}_{\alpha,q}^{(N)}\big)^{-\frac{\alpha}{2}}}.
		\end{equation*}
		Therefore, 
		\begin{equation*}
			1=[\Tr (-\Delta\gamma_n)]^{\frac{\alpha}{2}}\geq[\Tr (-\Delta\gamma)]^{\frac{\alpha}{2}} \ge \big(\mathcal{K}_{\alpha,q}^{(N)}\big)^{\frac{\alpha}{2}} \frac{\int_{\mathbb{R}^3}(\rho_\gamma*|x|^{-\alpha})\rho_\gamma dx}{[\Tr(\gamma^q)]^{4-\alpha} }= 1.
		\end{equation*}
		This together with \eqref{2.22} and  the fact that
		$a_i=0$ for $i>M$ indicates that $ u_{in} \overset{n}\to  u_i $ in $H^1(\mathbb{R}^3)$  for $1\leq i\leq M$.  Hence \eqref{2.12} follows.
		
		Subsequently, we will reveal the explicit expression of the optimizer $\gamma$. 
		We start with a minimiser normalised so that
		\begin{equation}\label{nor2}
			\mathrm{Tr}(-\Delta\gamma)=\int_{\mathbb{R}^3}(\rho_\gamma*|x|^{-\alpha})\rho_\gamma dx=1,\ \mbox{and}\ \|\gamma\|_{\mathcal{S}^q}=(\mathcal{K}_{\alpha,q}^{(N)})^\frac{\alpha}{4-\alpha}.
		\end{equation} 
		We will prove this in two parts: case (A):  $0<\alpha<3$ and $1\le q<\frac{4-\alpha}{(2-\alpha)_+}(<\infty)$,  and case (B): $2\le \alpha<3$ and $q=\infty$.
		{\em Case (A) }.	Given a smooth curve of operator
		\begin{equation*}
			0\leq \gamma^*(t)=\gamma(t)=\gamma+t\delta+o(t), \quad {\rm rank~}\gamma(t)\le N
		\end{equation*}
		Substituting $\gamma(t)$ into \eqref{eq:GNS} yields
		\begin{equation}\label{2.27}
			\begin{aligned}
				(\mathcal{K}_{\alpha,q}^{(N)})^{\frac{\alpha}{2}}&\le \frac{\big[\text{Tr}(\gamma^q(t))\big]^{\frac{4-\alpha}{2q}}[\Tr(-\Delta\gamma(t))]^{\frac{\alpha}{2}}}{\int_{\mathbb{R}^3}\big(\rho_{\gamma(t)}*|x|^{-\alpha}\big)\rho_{\gamma(t)}dx}\\
				&\le \frac{\big[\Tr(\gamma^q)+qt\text{Tr}(\delta\gamma^{q-1})+o(t)\big]^{\frac{4-\alpha}{2q}}[\text{Tr}(-\Delta\gamma)+t\text{Tr}(-\Delta\delta)+o(t)]^{\frac{\alpha}{2}}}{\int_{\mathbb{R}^3}\int_{\mathbb{R}^3}[\rho_\gamma(x)+t\rho_\delta(x)+o(t)(x)][\rho_\gamma(y)+t\rho_\delta(y)+o(t)(y)]|x-y|^{-\alpha}dxdy}\\
				&= [\Tr(\gamma^q)]^{\frac{4-\alpha}{2q}}\frac{[1+qt\frac{\text{Tr}(\delta\gamma^{q-1})}{\Tr(\gamma^q)}+o(t)]^{\frac{4-\alpha}{2q}}[1+t\Tr(-\Delta\delta)+o(t)]^{\frac{\alpha}{2}}}{1+2t\int_{\mathbb{R}^3}\int_{\mathbb{R}^3}\rho_{\gamma}(x)\rho_{\delta}(y)|x-y|^{-\alpha}dxdy+o(t)}\\
				&\le (\mathcal{K}_{\alpha,q}^{(N)})^{\frac{\alpha}{2}}\Big[1+t\Tr\Big(\delta\alpha\big[-\Delta-\frac{4}{\alpha}\rho_\gamma*|x|^{-\alpha}+\Big(\frac{4-\alpha}{\alpha}\Big)\frac{\gamma^{q-1}}{\text{Tr}(\gamma^q)}\big]\Big)+o(t)\Big],
			\end{aligned}
		\end{equation}
		As seen from the above equation,
		\begin{equation}\label{2.28}
			0\le t\Tr\Big(\delta\alpha\big[-\Delta-\frac{4}{\alpha}\rho_\gamma*|x|^{-\alpha}+\Big(\frac{4-\alpha}{\alpha}\Big)\frac{\gamma^{q-1}}{\text{Tr}(\gamma^q)}\big]\Big)+o(t),\quad \text{for $|t|$ small\ enough}.
		\end{equation}
		We deduce by taking $\gamma(t):=e^{{\rm i}tH}\gamma e^{-{\rm i}tH}=\gamma+{ \text{i}}t[H,\gamma]+o(t)$, for some self-adjoint (smooth and finite rank) operator $H$,
		\begin{equation*}
			\begin{aligned}
				0&=\text{Tr}\Big([H,\gamma]\Big(-\Delta-\frac{4}{\alpha}\rho_\gamma*|x|^{-\alpha}+\Big(\frac{4-\alpha}{\alpha}\Big)\frac{\gamma^{q-1}}{\text{Tr}(\gamma^q)}\Big)\Big)\\
				&=\text{Tr}\Big(H\Big[\gamma,-\Delta-\frac{4}{\alpha}\rho_\gamma*|x|^{-\alpha}+\Big(\frac{4-\alpha}{\alpha}\Big)\frac{\gamma^{q-1}}{\text{Tr}(\gamma^q)}\Big]\Big),
			\end{aligned}
		\end{equation*}
		where $[\cdot,\cdot]$ is Lie bracket and $[a,b]=ab-ba$. From the arbitrariness of $H$, it follows that
		$$\Big[\gamma,-\Delta-\frac{4}{\alpha}\rho_\gamma*|x|^{-\alpha}+\Big(\frac{4-\alpha}{\alpha}\Big)\frac{\gamma^{q-1}}{\text{Tr}(\gamma^q)}\Big]=0.$$
		Hence $\gamma$ commutes with operator $H_{\alpha,\gamma}:=-\Delta-\frac{4}{\alpha} \rho_\gamma*|x|^{-\alpha}$. We can therefore write the explicit expression of $\gamma=\sum_{i=1}^M k_i|u_{n_i}\rangle \langle u_{n_i}|, \ \text{for some}\ u_{n_i}\ \text{ satisfying } \ H_{\alpha,\gamma} u_{n_i}=\mu_{n_i}u_{n_i},$ and some $k_i>0$. In particular $H_{\alpha,\gamma}$ admits at least $M$ eigenvalues, $1\le M\le N$.
		
		Using now $\gamma (t) = \gamma + t\delta$ for a $\delta$ supported on the range of $\gamma$ and for $t$ small enough in \eqref{2.28}, thus
		\begin{equation*}
			-\Delta u_{n_i}-\frac{4}{\alpha}\rho_\gamma*|x|^{-\alpha}u_{n_i}+\frac{4-\alpha}{\alpha}(\text{Tr}(\gamma^q))^{-1}\gamma^{q-1}u_{n_i}\equiv 0.
		\end{equation*}
		We infer that
		\begin{equation*}
			\mu_{n_i}=-\frac{4-\alpha}{\alpha}\frac{k_i^{q-1}}{\Tr(\gamma^q)}<0 \text{\  and thus \ }k_i=\big[\frac{\alpha}{4-\alpha}\Tr\big(\gamma^q)\big]^{\frac{1}{q-1}}|\mu_{n_i}|^{\frac{1}{q-1}}.
		\end{equation*}
		By the identity $\Tr(\gamma^q)=\sum\limits_{i=1}^Mk_i^q$, we have 
		$$\big(\Tr(\gamma^q)\big)^\frac{1}{q-1}=\big(\frac{4-\alpha}{\alpha}\big)^\frac{q}{q-1}\Big(\sum\limits_{k=1}^M|\mu_{n_k}|^{\frac{q}{q-1}}\Big)^{-1}.$$ Eventually,  
		for $0<\alpha<2$ and $1< q<q_{\alpha}^*$, we get that
		\begin{equation}\label{i3A}
			\gamma=\sum_{i=1}^M k_i|u_{n_i}\rangle \langle u_{n_i}|,\quad \text{ where }\quad
			k_i=\frac{4-\alpha}{\alpha}\Big(\sum_{k=1}^M|\mu_{n_k}|^{\frac{q}{q-1}}\Big)^{-1}|\mu_{n_i}|^{\frac{1}{q-1}}.
		\end{equation}
		{\em Case (B) }.  
		As in Case (A), consider the same admissible variation
		$\gamma(t)=\gamma+t\delta+o(t)$, where $\delta$ is chosen  such that $\|\gamma(t)\|\le \|\gamma\|$.  For $|t|$ small enough,
		\begin{equation*}
			\begin{aligned}
				(\mathcal{K}_{\alpha,\infty}^{(N)})^{\frac{\alpha}{2}}&\le \frac{\|\gamma(t)\|^{\frac{4-\alpha}{2}}[\Tr(-\Delta\gamma(t))]^{\frac{\alpha}{2}}}{\int_{\mathbb{R}^3}\big(\rho_{\gamma(t)}*|x|^{-\alpha}\big)\rho_{\gamma(t)}dx}
				\le (\mathcal{K}_{\alpha,\infty}^{(N)})^{\frac{\alpha}{2}}\frac{1+\alpha t\Tr(-\Delta\delta)+o(t)}{1+2t\int_{\mathbb{R}^3}(\rho_\gamma*|x|^{-\alpha})\rho_\delta dx+o(t)}\\
				&\le (\mathcal{K}_{\alpha,\infty}^{(N)})^{\frac{\alpha}{2}}\big\{1+\alpha t\Tr[\delta(-\Delta-\frac{4}{\alpha}\rho_\gamma*|x|^{-\alpha})]+o(t)\big\}.
			\end{aligned}
		\end{equation*}
		Taking $\delta={\rm i}[H,\gamma]$ for some self-adjoint and finite rank operator $H$, then we obtain that
		\begin{equation*}
			\Big[\gamma,\ -\Delta-\frac{4}{\alpha}\rho_\gamma*|x|^{-\alpha}\Big]=0.
		\end{equation*}
		Similar we have 
		\begin{equation*}
			\gamma=\sum_{i=1}^M k_i|u_{n_i}\rangle \langle u_{n_i}|, \quad \text{for some}\ u_{n_i}\ \text{ satisfying } \ H_{\alpha,\gamma} u_{n_i}=\mu_{n_i}u_{n_i}.
		\end{equation*}
		
		We claim that for all $1\leq i\le R$,
		\begin{equation}\label{2.30}
			\text{$\mu_{n_i}\leq0$ if $2<\alpha<3$, and  $\mu_{n_i}<0$ if $\alpha=2$}.
		\end{equation}
		Actually, for $0<\varepsilon <k_i$ is small enough, let
		\begin{equation*}
			\gamma'=\gamma-\varepsilon |u_{n_i}\rangle \langle u_{n_i}|.
		\end{equation*}
		Then, $\|\gamma'\|\leq \|\gamma\|=(\mathcal{K}_{\alpha,q}^{(N)})^\frac{\alpha}{4-\alpha}$, $\rho_{\gamma'}=\rho_\gamma-\varepsilon |u_{n_i}|^2$ and 
		\begin{equation*}
			\Tr(-\Delta\gamma')=\Tr(-\Delta\gamma)-\varepsilon\int_{\mathbb{R}^3}|\nabla u_{n_i}|^2dx=1-\varepsilon\int_{\mathbb{R}^3}|\nabla u_{n_i}|^2dx.
		\end{equation*}
		Moreover, we can deduce from $H_{\alpha,\gamma} u_{n_i}=\mu_{n_i}u_{n_i}$ that 
		$$\int_{\mathbb{R}^3}|\nabla u_{n_i}|^2dx=\mu_{n_i}+\frac{4}{\alpha}\int_{\mathbb{R}^3}(\rho_{\gamma}*|x|^{-\alpha})|u_{n_i}|^2dx.$$
		By the definition of $\mathcal{K}_{\alpha,\infty}^{(N)}$, we have $$(\mathcal{K}_{\alpha,\infty}^{(N)})^{\frac{\alpha}{2}}\le \frac{\|\gamma'\|^{\frac{4-\alpha}{2}}\big(\Tr(-\Delta\gamma')\big)^{\frac{\alpha}{2}}}{\int_{\mathbb{R}^3}(\rho_{\gamma'}*|x|^{-\alpha})\rho_{\gamma'}dx},$$ 
		Using \eqref{nor2}, and the above two
		estimates, we arrive at 
		\begin{equation*}
			1-2\varepsilon\int_{\mathbb{R}^3}(\rho_{\gamma}*|x|^{-\alpha})|u_{n_i}|^2dx+\varepsilon^2\int_{\mathbb{R}^3}(|u_{n_i}|^2*|x|^{-\alpha})|u_{n_i}|^2dx\le \big(1-\varepsilon \mu_{n_i}-\frac{4\varepsilon}{\alpha}\int_{\mathbb{R}^3}(\rho_{\gamma}*|x|^{-\alpha})|u_{n_i}|^2dx\big)^{\frac{\alpha}{2}}.
		\end{equation*}
		Applying Taylor's expansion of $(1-x)^\alpha$, we can obtain that 
		\begin{equation*}
			\mu_{n_i}\leq -\varepsilon\int_{\mathbb{R}^3}(|u_{n_i}|^2*|x|^{-\alpha})|u_{n_i}|^2dx \text{\ for \ } \alpha=2,
		\end{equation*}
		and 
		\begin{equation*}
			\alpha\mu_{n_i}\leq -\varepsilon\int_{\mathbb{R}^3}(|u_{n_i}|^2*|x|^{-\alpha})|u_{n_i}|^2dx+O(\varepsilon) \text{\ for \ } \alpha\in(2,3).
		\end{equation*}
		These two estimates  indicate  that claim \eqref{2.30} holds.

		Next, we point out when $2\le \alpha<3$ and $q=\infty$, then, $k_i\equiv k_j, \forall i,j\in\{1\leq l\leq M: \mu_{n_l}<0\}$. We agree by negative, there exists $k_m,\ \varepsilon,$ such that $\ k_m+\varepsilon<\|\gamma\|$. Then we construct
		$\gamma(\varepsilon)=\gamma+\varepsilon|u_i\ra\la u_i|.$
		Similar to the discussion above, by $\mu_i<0$, we get
		\begin{equation*}
			\begin{aligned}
				(\mathcal{K}_{\alpha,\infty}^{(N)})^{\frac{\alpha}{2}}&\le \frac{\|\gamma(\varepsilon)\|^{\frac{4-\alpha}{2}}(\Tr(-\Delta\gamma(\varepsilon)))^{\frac{\alpha}{2}}}{\int_{\mathbb{R}^3}(\rho_{\gamma(\varepsilon)}*|x|^{-\alpha})\rho_{\gamma(\varepsilon)}dx}
				\leq(\mathcal{K}_{\alpha,\infty}^{(N)})^{\frac{\alpha}{2}}\frac{1+\frac{\varepsilon\alpha}{2}\mu_{m}+2\varepsilon\int_{\mathbb{R}^3}(\rho_\gamma*|x|^{-\alpha})|u_m|^2dx+o(\varepsilon)}{\int_{\mathbb{R}^3}\big[(\rho_\gamma+\varepsilon|u_m|^2)*|x|^{-\alpha}\big](\rho_\gamma+\varepsilon|u_m|^2)dx}\\
				&\le (\mathcal{K}_{\alpha,\infty}^{(N)})^{\frac{\alpha}{2}}(1+\frac{\varepsilon\alpha}{2}\mu_m+o(\varepsilon))<(\mathcal{K}_{\alpha,\infty}^{(N)})^{\frac{\alpha}{2}}.
			\end{aligned}
		\end{equation*}
		Which is a contradiction, so that $k_i\equiv k_j$.
		
		Finally, we prove that $H_{\alpha, \gamma}$ has at most $M $ negative eigenvalue, where $M<N$. If not, there exists $M+1\le N$ and $\mu_{n_{M+1}}<0$, then for $\varepsilon>0$ small enough, we construct $\gamma(\varepsilon)=\gamma+\varepsilon|u_{n_{M+1}}\ra\la u_{n_{M+1}}|.$
		Similar with \eqref{2.27}, this gives
		\begin{equation*}
			\begin{aligned}
				(\mathcal{K}_{\alpha,q}^{(N)})^{\frac{\alpha}{2}} &\le (\mathcal{K}_{\alpha,q}^{(N)})^{\frac{\alpha}{2}}\Big(1+\alpha \varepsilon\Big(u_{n_{M+1}},\big(-\Delta-\frac{4}{\alpha}\rho_\gamma*|x|^{-\alpha}\big)u_{n_{M+1}}\Big)+o(\varepsilon)\Big)\\
				&\le (\mathcal{K}_{\alpha,q}^{(N)})^{\frac{\alpha}{2}}(1+\varepsilon\mu_{M+1}+o(\varepsilon))<(\mathcal{K}_{\alpha,q}^{(N)})^{\frac{\alpha}{2}}.
			\end{aligned}
		\end{equation*}
		a contradiction. Thus $H_{\alpha, \gamma}$ has at most $M$ negative eigenvalue, where $M<N$.
		
		In the previous section, we have provided the specific expressions for $k_i$ in \eqref{i3A} for the case $0<\alpha<3$ and $1< q<\frac{4-\alpha}{(2-\alpha)_+}$. For case $\alpha=2,\ q=\infty$, $k_i$ can express by \eqref{2.30} and $k_i\equiv k_j, \forall i,j\in\{1\leq l\leq M: \mu_{n_l}<0\}$.
		
		It still remains to prove that $\{\mu_{n_i}\}_{1\le i\le R}$ are the $R$ first negative eigenfunctions of $H_{\alpha,\gamma}$ under either of the following conditions $0<\alpha\le 2$, or $2<\alpha<3$ and $\Rank \gamma<N$.
		We proceed by contradiction. Suppose that the $i$-th eigenfunction of $H_{\alpha,\gamma}$ (which corresponds to the eigenvalue satisfying $\mu_i<\mu_{n_i}<0$) does not lie in the range of $\gamma$.
		
		For the first condition: $0<\alpha\le 2$,
		we define
		\begin{equation*}
			\gamma':=\gamma-k_i|u_{n_i}\rangle \langle u_{n_i}|+k_i|u_i\rangle \langle u_i|:=\gamma+\delta.
		\end{equation*}
		Then, $\|\gamma\|_{\mathcal{S}^q}=\|\gamma'\|_{\mathcal{S}^q}$, and we have  
		\begin{equation*}
			\begin{aligned}
				0<\text{Tr}(-\Delta\gamma')&=\text{Tr}(-\Delta\gamma)+k_i\langle u_i,-\Delta u_i\rangle-k_i\langle u_{n_i},-\Delta u_{n_i}\rangle\\
				&=1+\frac{4k_i}{\alpha}\int_{\mathbb{R}^3}(\rho_\gamma*|x|^{-\alpha})(|u_i|^2-|u_{n_i}|^2)+(\mu_i-\mu_{n_i})k_i\\
				&<1+\frac{4k_i}{\alpha}\int_{\mathbb{R}^3}(\rho_\gamma*|x|^{-\alpha})(|u_i|^2-|u_{n_i}|^2).
			\end{aligned}
		\end{equation*}
		We deduce from the non-negativity properties of the Hartree-type convolution \cite{LenzmannLewin} that
		\begin{equation*}
			\begin{aligned}
				\int_{\mathbb{R}^3}(\rho_{\gamma'}*|x|^{-\alpha})\rho_{\gamma'}dx=&\int_{\mathbb{R}^3}(\rho_{\gamma}*|x|^{-\alpha})\rho_{\gamma}dx+2k_i\int_{\mathbb{R}^3}(\rho_{\gamma}*|x|^{-\alpha})(|u_i|^2-|u_{n_i}|^2)dx\\
				&+k_i^2\int_{\mathbb{R}^3}\big[(|u_i|^2-|u_{n_i}|^2)*|x|^{-\alpha}\big](|u_i|^2-|u_{n_i}|^2)dx\\
				\ge&1+2k_i\int_{\mathbb{R}^3}(\rho_{\gamma}*|x|^{-\alpha})(|u_i|^2-|u_{n_i}|^2)dx>1-\f\alpha2\ge0.
			\end{aligned}
		\end{equation*}
		
		Since $\alpha\leq2$, we see that
		\begin{equation*}
			\begin{aligned}
				(\mathcal{K}_{\alpha,q}^{(N)})^{\frac{\alpha}{2}}\leq \frac{\|\gamma'\|_{\mathcal{S}^q}^{\frac{4-\alpha}{2}}(\Tr(-\Delta\gamma'))^{\frac{\alpha}{2}}}{\int_{\mathbb{R}^3}\rho_{\gamma'}*|x|^{-\alpha}\rho_{\gamma'}dx}&<(\mathcal{K}_{\alpha,q}^{(N)})^{\frac{\alpha}{2}}\frac{\Big(1+\frac{4k_i}{\alpha}\int_{\mathbb{R}^3}(\rho_\gamma*|x|^{-\alpha})(|u_i|^2-|u_{n_i}|^2)dx\Big)^{\frac{\alpha}{2}}}{1+2k_i\int_{\mathbb{R}^3}(\rho_{\gamma}*|x|^{-\alpha})(|u_i|^2-|u_{n_i}|^2)dx}<(\mathcal{K}_{\alpha,q}^{(N)})^{\frac{\alpha}{2}}.
			\end{aligned}
		\end{equation*}
		Which is a contradiction, hence $\mu_{n_i}=\mu_i$, for all $i$.
		
		For the second condition: $2<\alpha<3$ and rank $\gamma<N$,  we set
		\begin{equation*}
			\gamma':=\gamma-\varepsilon|u_{n_i}\rangle \langle u_{n_i}|+\varepsilon |u_{i}\rangle \langle u_{i}|,
		\end{equation*}
		where $0<\varepsilon<k_i$ is small enough. Then, $\Rank \gamma'\leq N$ and  $\|\gamma'\|_{\mathcal{S}^q}\leq\|\gamma\|_{\mathcal{S}^q}=(\mathcal{K}_{\alpha,q}^{(N)})^\frac{4-\alpha}{\alpha}$ for $q\geq1$. Moreover, by using $\mu_i<\mu_{n_i}<0$, a contradiction can still be derived:
		\begin{equation*}
			\begin{aligned}
				(\mathcal{K}_{\alpha,q}^{(N)})^{\frac{\alpha}{2}}
				&\leq \frac{\|\gamma'\|_{\mathcal{S}^q}^{\frac{4-\alpha}{2}}(\Tr(-\Delta\gamma'))^{\frac{\alpha}{2}}}{\int_{\mathbb{R}^3}\rho_{\gamma'}*|x|^{-\alpha}\rho_{\gamma'}dx}
				<(\mathcal{K}_{\alpha,q}^{(N)})^{\frac{\alpha}{2}}\frac{\Big(1+\frac{4\varepsilon}{\alpha}\int_{\mathbb{R}^3}(\rho_\gamma*|x|^{-\alpha})(|u_i|^2-|u_{n_i}|^2)dx+(\mu_i-\mu_{n_i})\varepsilon\Big)^{\frac{\alpha}{2}}}{1+2\varepsilon\int_{\mathbb{R}^3}(\rho_{\gamma}*|x|^{-\alpha})(|u_i|^2-|u_{n_i}|^2)dx}\\
				<&(\mathcal{K}_{\alpha,q}^{(N)})^{\alpha}\frac{1+2\varepsilon\int_{\mathbb{R}^3}(\rho_{\gamma}*|x|^{-\alpha})(|u_i|^2-|u_{n_i}|^2)dx+\frac{\alpha}{2}\varepsilon (\mu_i-\mu_{n_i})+o(\varepsilon)}{1+2\varepsilon\int_{\mathbb{R}^3}(\rho_{\gamma}*|x|^{-\alpha})(|u_i|^2-|u_{n_i}|^2)dx} 
				<(\mathcal{K}_{\alpha,q}^{(N)})^{\alpha}.
			\end{aligned}
		\end{equation*}
		
		(III) Inspired by \cite{Lewin}, we first claim that when $0<\alpha\le2$, $q$ is close to $q_\alpha^*$ sufficiently, then 
		\begin{equation}\label{iii}
			\mathcal{K}_{\alpha,q}^{(2N)}<\mathcal{K}_{\alpha,q}^{(N)} \ \text{ provided $\mathcal{K}_{\alpha,q}^{(N)}$ has an optimizer $\gamma$ of rank $N$.}
		\end{equation}
		Indeed, let $\gamma:=\sum_{j=1}^N k_j|u_j\rangle \langle u_j|$ be a minimizer  for $\mathcal{K}_{\alpha,q}^{(N)}$ of rank $N$,  such that
		\begin{equation*}
			\text{Tr}(-\Delta \gamma)=\int_{\mathbb{R}^3}(\rho_\gamma*|x|^{-\alpha})\rho_\gamma dx=1
		\end{equation*}
		and $(\mathcal{K}_{\alpha,q}^{(N)})^{\frac{\alpha}{4-\alpha}}=(\Tr (\gamma^q))^{\frac{1}{q}}$, if $q\not =\infty$, $\mathcal{K}_{2,\infty}^{(N)}=\|\gamma\|$, if $q=\infty$.
		Furthermore, there exist $N$ functions $u_1,\ u_2,...,u_N$ satisfy
		\begin{equation*}
			-\Delta u_j-\frac{4}{\alpha}(\rho_\gamma*|x|^{-\alpha})u_j=\mu_j u_j,\quad \forall\ 1\le j\le N.
		\end{equation*}
		
		For $R>0$, we define $u_{j,R}(x):=u_j(x-Re_1)$ with $e_1=(1,0,...,0)$, and the Gram matrix
		\begin{equation*}
			G_{R}= \left( \begin{array}{cc}
				E_{N\times N} & E^{R} \\
				(E^{R})^* & E_{N\times N}
			\end{array}\right)
		\end{equation*}
		with $E_{N\times N}$ is $N\times N$ identity matrix and $E^{R}=(E_{ij}^{R})_{N\times N}$ where
		\begin{equation*}
			E_{ij}^{R}:=( u_i, u_{j,R})_{L^2}=\int_{\mathbb{R}^3}u_i(x)u_j(x-Re_1)dx.
		\end{equation*}
		Next, we estimate $|E_{ij}^{R}|$. By Proposition \ref{decay}, we get
		\begin{equation*}
			\begin{aligned}
				|E_{ij}^{R}|&\le \int_{\mathbb{R}^3}|u_i(x)||u_{j}(x-Re_1)|dx\le  \int_{\mathbb{R}^3} C_i |x|^{-2}e^{-\sqrt{-\mu_i}\abs{x}}e^{-\sqrt{-\mu_j}\abs{x-Re_1}}dx=O({R^{-\infty}}).
			\end{aligned}
		\end{equation*}
		We  construct
		\begin{equation*}
			\left(\begin{array}{cc}
				\phi_{1,R}\\
				\vdots\\
				\phi_{N,R}\\
				\phi_{N+1, R}\\
				\vdots\\
				\phi_{2N,R}
			\end{array}\right)=(G_{R})^{-\frac{1}{2}}
			\left(\begin{array}{cc}
				u_1\\
				\vdots\\
				u_N\\
				u_{1,R}\\
				\vdots\\
				u_{N,R}
			\end{array}\right)
		\end{equation*}
		and
		\begin{equation*}
			\gamma_{R}=\sum_{i=1}^Nk_i\Big(|\phi_{i,R}\rangle\langle\phi_{i,R}|+|\phi_{N+i,R}\rangle \langle \phi_{N+i,R}|\Big).
		\end{equation*}
		We have
		\begin{equation*}
			\text{Tr}(\gamma_{R}^q)=2\text{Tr}(\gamma^q).
		\end{equation*}
		Substitute $\gamma_{R}$ into the inequality (\ref{eq:GNS}), which yields
		\begin{equation*}
			(\mathcal{K}_{\alpha,q}^{(2N)})^{\frac{\alpha}{2}}\le (\mathcal{K}_{\alpha,q}^{(N)})^{\frac{\alpha}{2}}\frac{2^{\frac{4-\alpha}{2q}}(\text{Tr}(-\Delta\gamma_{R}))^{\frac{\alpha}{2}}}{\int_{\mathbb{R}^3}(\rho_{\gamma_{R}}*|x|^{-\alpha})\rho_{\gamma_{R}}dx}
		\end{equation*}
		and
		\begin{equation*}
			\mathcal{K}_{2,\infty}^{(2N)}\le \mathcal{K}_{2,\infty}^{(N)}\frac{\text{Tr}(-\Delta\gamma_{R})}{\int_{\mathbb{R}^3}(\rho_{\gamma_{R}}*|x|^{-2})\rho_{\gamma_{R}}dx}.
		\end{equation*}
		By Taylor expansion of the Gram matrix $G_{R}$ as in \cite{Lewin}, we obtain that
		\begin{equation*}
			(G_{R})^{-\frac{1}{2}}=\left( \begin{array}{cc}
				E_{N\times N} &0 \\
				0 & E_{N\times N}
			\end{array}\right)-\frac{1}{2}\left( \begin{array}{cc}
				0 & E^{R} \\
				(E^{R})^* & 0
			\end{array}\right)+o((\max_{i,j}E_{ij}^{R})^2),
		\end{equation*}
		where
		\begin{equation*}
			\max_{i,j}E_{ij}^{R}=\max_{i,j}\int_{\mathbb{R}^3}|u_i(x)||u_j(x-Re_1)|dx=O(R^{-\infty}).
		\end{equation*}
		Then
		\begin{equation*}
			\gamma_{R}=\gamma+\gamma_{R}'-\frac{1}{2}\sum_{i=1}^N\sum_{j=1}^N (k_i+k_j)E_{ij}^{R}(|u_{i}\rangle\langle u_{j,R}|+|u_{j,R}\rangle\langle u_{i}|)+O(R^{-\infty}),
		\end{equation*}
		where $\gamma_{R}'=\sum_{i=1}^N k_i|u_{i,R}\rangle \langle u_{i,R}|.$
		It can be obtained through simple calculations that
		\begin{equation}\label{2.35}
			\begin{aligned}
				\text{Tr}(-\Delta{\gamma_{R}})=&\text{Tr}(-\Delta{\gamma})+\text{Tr}(-\Delta{\gamma_{R}'})\\
                -&\sum_{i=1}^N\sum_{j=1}^N (k_i+k_j)E_{ij}^{R}\Big(\mu_iE_{ij}^{R}
				+\frac{4}{\alpha}\int_{\mathbb{R}^3}(\rho_\gamma*|x|^{-\alpha})u_iu_{j,R}dx\Big)
				+O(R^{-\infty})\\
				\le& 2+\frac{4}{\alpha}\sum_{i=1}^N\sum_{j=1}^N(k_i+k_j)E_{ij}^{R}\|\rho_\gamma*|x|^{-\alpha}\|_{\infty}\int_{\mathbb{R}^3}u_iu_{j,R}dx+O(R^{-\infty})\\
				\le& 2+O(R^{-\infty}),
			\end{aligned}
		\end{equation}
		and
		\begin{equation*}
			\rho_{\gamma_{R}}=\rho_\gamma+\rho_{\gamma_{R}'}-\sum_{i=1}^N\sum_{j=1}^N(k_i+k_j)E_{ij}^{R}u_i u_{j,R}+O(R^{-\infty}).
		\end{equation*}
		Substituting this expansion into the Hartree term, we have
		\begin{equation*}
			\begin{aligned}
				\int_{\mathbb{R}^3}(\rho_{\gamma_{R}}*|x|^{-\alpha})\rho_{\gamma_{R}}=&2\int_{\mathbb{R}^3}(\rho_\gamma*|x|^{-\alpha})\rho_\gamma dx+2\sum_{i=1}^N\sum_{j=1}^N k_i k_j\int_{\mathbb{R}^3}(u_i^2*|x|^{-\alpha})u_{j,R}^2dx\\
				&-2\sum_{i=1}^N\sum_{j=1}^N(k_i+k_j)E_{ij}^{R}\int_{\mathbb{R}^3}\big((\rho_\gamma+\rho_{\gamma_R'})*|x|^{-\alpha}\big)u_i u_{j,R}dx+O(R^{-\infty})\\
				\ge&2\int_{\mathbb{R}^3}(\rho_\gamma*|x|^{-\alpha})\rho_\gamma dx+2k_1^2\int_{\mathbb{ R}^3}(u_1^2*|x|^{-\alpha})u_{1,R}^2dx+		O({R}^{-\infty}).
			\end{aligned}
		\end{equation*}
		By Proposition \ref{decay}, it follows from
		\begin{equation*}
			\int_{\mathbb{R}^3}\int_{\mathbb{R}^3}\frac{u_1^2(x)u_1^2(y-Re_1)}{|x-y|^{\alpha}}dx 
			\ge  \frac{C}{R^\alpha}
		\end{equation*}
		that
		\begin{equation*}
			\int_{\mathbb{R}^3}(\rho_{\gamma_{R}}*|x|^{-\alpha})\rho_{\gamma_{R}}\ge2\int_{\mathbb{R}^3}(\rho_\gamma*|x|^{-\alpha})\rho_\gamma dx+\frac{C}{R^\alpha}.
		\end{equation*}
		Therefore, using
		$\Tr(\gamma_R^q)=2\Tr(\gamma^q)$,
		\eqref{2.35} and the above inequality, we obtain
		\begin{equation*}
			\begin{aligned}
				(\mathcal{K}_{\alpha,q}^{(2N)})^{\frac{\alpha}{2}}&\le (\mathcal{K}_{\alpha,q}^{(N)})^{\frac{\alpha}{2}}\frac{2^{\frac{4-\alpha}{2q}}(2+O(R^{-\infty}))^{\frac{\alpha}{2}}}{2+2k_1^2CR^{-\alpha}}=2^{\frac{4-\alpha}{2q}+\frac{\alpha}{2}-1}(\mathcal{K}_{\alpha,q}^{(N)})^{\frac{\alpha}{2}}(1-k_1^2CR^{-\alpha}+O(R^{-2\alpha}))< (\mathcal{K}_{\alpha,q}^{(N)})^{\frac{\alpha}{2}},
			\end{aligned}
		\end{equation*}
		for $0<\alpha\le2$, $q$ is close to $q_\alpha^*$ sufficiently, and $R$ is large enough. Hence, \eqref{iii} is obtained. The proof of case $q=q_\alpha^*$  is similar, and we omit the proof for this part.
		
		Finally, we will prove \eqref{2n<n}. 
		By contradictiony, if $\mathcal{K}_{\alpha,q}^{(N)}=\mathcal{K}_{\alpha,q}^{(2N)}$ for some $N\in \mathbb{N}^+$. Then, there exists a $\gamma$ with ${\rm rank~}\gamma=M\leq N$ such that $\gamma$ is a minimizer of (\ref{eq:GNS}) with $\mathcal{K}_{\alpha,q}^{(M)}=\mathcal{K}_{\alpha,q}^{(N)}=\mathcal{K}_{\alpha,q}^{(2N)}$. Thus we have $\mathcal{K}_{\alpha,q}^{(M)}=\mathcal{K}_{\alpha,q}^{(2M)}\ $, which however contradicts  \eqref{iii}. 
		
	\end{proof}
	
	
	
	\section{Existence and non-existence of minimizers}\label{sec:3}
	In this section, we study the variational problem \eqref{eq:mainvar}
	and determine the role of the critical coupling constant
	$\mathcal K^{(N)}$. The finite-rank GNS inequality obtained in
	Section~\ref{sec:2} provides lower boundedness of \eqref{eq:energy} and also controls the possible ranks of minimizers. We shall establish Theorem \ref{thm:existence} on the existence and non-existence of minimizers for $e_K(N)$ defined by \eqref{eq:mainvar}.
	
	\begin{proof}[Proof of Theorem \ref{thm:existence}]
		\textbf{1.} We first prove that energy has a lower bound, where $0<K<\mathcal{K}^{(N)}$.
		Let $\ga=\sum_{i=1}^M|u_i\ra\la u_i|$ be an operator, since $V(x)\geq 0$, we obtain from \eqref{eq:critical} that for any $0<K<\mathcal{K}^{(N)}$,
		\begin{equation}\label{2.1}
			E_k(\ga)=\mathrm{Tr}\big(-\Delta+V(x)\big)\ga-K\inte(\rho_{\gamma}*|x|^{-2})\rho_{\gamma}dx\geq\l(1-\f{K}{\mathcal{K}^{(N)}}\r)\mathrm{Tr}(-\Delta\ga)+\inte V(x)\rho_\ga(x)dx\geq 0,
		\end{equation}
		
		Let $\{\ga_n\}$ be a minimizing sequence of $e_K(N)$ satisfying  $\ga_n=\sum_{i=1}^{m_n}|u_i^n\ra\la u_i^n|$, $u_i^n\in\mathcal{H}$, $(u_i^n,u_j^n)=\delta_{ij}$ for $i,j=1,\cdots,m_n$, and $\lim_{n\to\infty}E_K(\ga_n)=e_K(N)$.  Since $1\leq m_n\leq N$ is an integer, up to a subsequence, we may assume that $m_n\equiv M \in [1,N]$. We derive from  \eqref{2.1} that $\{u_i^n\}$ is bounded uniformly in $\mathcal{H}$ for $1\leq i \leq M$. Following compactness result (see e.g. \cite[Theorem XIII.67]{Reed1}), we obtain that there exists a function $u_i(x)\in\mathcal{H}$ such that for $1\leq i \leq M$,
		\begin{equation*}
			u_i^n\rightharpoonup u_i\ \ \hbox{weakly in}\ \ \mathcal{H}\ \ \hbox{and}\ \ u_i^n\to u_i\ \ \hbox{strongly in}\ \ L^q(\R^3)\ \ \hbox{as}\ \ n\to\infty,\ \ 2\leq q<6.
		\end{equation*}
		Therefore, we have
		\begin{equation}\label{2.3}
			\rho_{\ga_n}
			=\sum\limits_{i=1}^{M}|u_i^n|^2\to\rho_\ga=\sum\limits_{i=1}^M|u_i|^2\ \ \hbox{strongly in}\ \ L^r(\R^3)\ \ \hbox{as}\ \ n\to\infty,\ \ 1\leq r<3,
		\end{equation}
		where $\ga:=\sum\limits_{i=1}^M|u_i\ra\la u_i|$, and $u_i\in\mathcal{H}$ satisfies $(u_i,u_j)=\delta_{ij}$ for $i,j=1,\cdots,M$. 
		Moreover, by the weak lower semi-continuity, we obtain from \eqref{2.3} that
		\begin{equation*}
			e_K(N)=\lim\limits_{n\to\infty} E_K(\ga_n)\geq E_k(\ga).
		\end{equation*}
		Combined with the definition of $e_K(N)$, it can be explained that $\ga$ is a minimizer of $e_K(N)$ for $0<K<\mathcal{K}^{(N)}$.
		We then conclude that there exists at least one minimizer of $e_K(N)$ for any  $0<K<\mathcal{K}^{(N)}$.

		For any $0<K<\mathcal{K}^{(N)}$, assume  $\ga$ is a minimizer of $e_K(N)$. Similar to the argument of \cite[Appendix A]{Sol}, $\ga$ can be written in the form $\ga=\sum_{i=1}^M|u_{k_i}\ra\la u_{k_i}|$, where $u_{k_i}$ is an eigenfunction of the operator $$H_V=-\Delta+V(x)-2K(\rho_\ga*|x|^{-2})\ \ \mbox{in} \ \ \R^3,$$
		and corresponds to the $k_i$-th eigenvalue $\mu_{k_i}$. This gives that $u_{k_i}$ satisfies
		\begin{equation}\label{3.3}
			-\Delta u_{k_i}+V(x)u_{k_i}-2K(\rho_\ga*|x|^{-2})u_{k_i}=\mu_{k_i}u_{k_i},\ \ 1\leq i \leq M,
		\end{equation}
		where $\rho_\ga(x)=\sum_{j=1}^M|u_{k_j}|^2$.
		In the following, we prove that $(u_{k_1},u_{k_2})$ is a ground state of \eqref{eq:EL}.
		Noting from \cite[Theorem 11.8]{Lieb} that $\mu_1< \cdots < \mu_M$, it suffices to show that $\mu_{k_i}=\mu_i$ holds for $1\leq i \leq M$, which are the i-first eigenvalues of the operator $H_V$.
		
		We first prove that $\mu_{k_1}=\mu_1$.
		If $\mu_{k_1}\neq\mu_1$, then $\mu_1<\mu_{k_1}\leq\mu_{k_2}$. Hence,  there is an eigenfunction $u_1\in\mathcal{H}$ of $H_V$ in $\R^3$, which corresponds to the first eigenvalue $\mu_1$.
		Define the operator
		\begin{equation*}
			\ga':=\ga-|u_{k_1}\ra\la u_{k_1}|+|u_1\ra\la u_1|
			=|u_1\ra\la u_1|+\sum_{i=2}^{M}|u_{k_i}\ra\la u_{k_i}|.
		\end{equation*}
		We then calculate from \eqref{3.3} that
		\begin{equation*}
			\begin{aligned}
				\mathrm{Tr}(-\Delta \ga')=&\mathrm{Tr}(-\Delta\ga)-\inte|\nabla u_{k_1}|^2dx+\inte |\nabla u_1|^2dx\\
				=&\mathrm{Tr}(-\Delta\ga)+\inte V(x)\big(|u_{k_1}|^2-|u_1|^2\big)dx
				+2K\inte(\rho_\ga*|x|^{-2})\big(|u_1|^2-|u_{k_1}|^2\big)dx+\mu_1-\mu_{k_1},
			\end{aligned}
		\end{equation*}
		and
		\begin{equation*}
			\mathrm{Tr}(V(x)\ga')=\mathrm{Tr}(V(x)\ga)+\inte V(x)\big(|u_1|^2-|u_{k_1}|^2\big)dx.
		\end{equation*}
		Moreover
		\begin{equation*}
			\inte (\rho_\ga'*|x|^{-2})\rho_\ga'dx
			\geq \inte(\rho_\ga*|x|^{-2})\rho_\ga dx+2\inte(\rho_\ga*|x|^{-2})\big(|u_1|^2-|u_{k_1}|^2\big)dx.
		\end{equation*}
		Since $\mu_1<\mu_{k_1}$, we now conclude from above that
		\begin{equation*}
			\begin{aligned}
				e_K(N)\leq E_K(\ga')
				\leq E_k(\ga)+\mu_1-\mu_{k_1}<E_k(\ga)=e_K(N),
			\end{aligned}
		\end{equation*}
		a contradiction. We hence obtain that $\mu_{k_1}=\mu_1$.
		
		Similar with the above analysis,  considering the following operator
		\begin{equation*}
			\ga'_i:=\ga-|u_{k_i}\ra\la u_{k_i}|+|u_i\ra\la u_i|,\ \ 1\leq i \leq M,
		\end{equation*}
		yields again a contradiction. This proves that $\mu_{k_i}=\mu_i$ for $1\leq i \leq M$.
		
		At last, we prove that if $\mu_{i}<0$, for $1\leq i \leq r_K < N$, then $\mu_{r_K+1}>0$. Indeed, if $\mu_{r_K+1}\leq 0$, then we can define $\ga_i:=\ga+|u_{r_k+1}\ra\la u_{r_k+1}|$. Similar with the above analysis, we can obtain that 
		$$e_K(N)\leq E_K(\ga')
		\leq E_k(\ga)+\mu_{r_K+1}-K\inte(|u_{r_k+1}|^2*|x|^{-2})|u_{r_k+1}|^2dx<e_K(N).$$

		\textbf{2.} In this part, we first prove that if $K\searrow 0$, then then $\Rank \gamma_K\equiv1$. Let $\mu_{v}>0$ be the first  eigenvalue of $-\Delta+V(x)$,  and $v(x)$ be the corresponding eigenfunctions. Define $\gamma_1:=|v\rangle\langle v|$, thus we have
		\begin{equation*}
			e_K(N)\le E(\gamma_1)= \mu_v-K\int_{\mathbb{R}^3}(v^2*|x|^{-2})v^2dx\le \mu_v+O(K) \quad \text{ as\quad } K\searrow0.
		\end{equation*}
		If there exist a minimizer $\gamma_K$ satisfies $\Rank \gamma_K>1$, by \eqref{eq:critical}, we know that there exists $C>0$ independent of $K$ such that 
		\begin{equation*}
			\int_{\mathbb{R}^3}(\rho_{\gamma_K}*|x|^{-2})\rho_{\gamma_K} dx\le C\quad \text{as\quad } K\searrow 0. 
		\end{equation*}
		Then we obtain that
		\begin{equation*}
			e_K(N)= \Tr(-\Delta\gamma_K)+\int_{\mathbb{R}^3}V(x)\rho_{\gamma_K} dx+O(K)\ge 2\mu_V+O(K) \quad \text{as\quad } K\searrow0.
		\end{equation*}
		Which is a contradiction, Thus,  $\Rank\gamma_K\equiv1$ for $K>0$ is small enough. Moreover, by \cite[Appendix A]{GuoZengZhou}, we know that the  minimizer for  (\ref{eq:mainvar})  is unique when $K>0$ is small enough. 
		
		Next, we consider the case $K\nearrow \mathcal{K}^{(N)}$. Let $\gamma_0=\sum_{i=1}^m |Q_i\rangle \langle Q_i|$ with rank $\gamma_0=m$ be a minimizer  of $\mathcal{K}^{(N)}$, where $Q_i$ satisfies
		\begin{equation*}
			-\Delta Q_i-2\mathcal{K}^{(N)}(\rho_{\gamma_0}*|x|^{-2})Q_i=\mu_iQ_i,\quad \forall 1\le i\le r.
		\end{equation*}
		Take a non-negative function $\varphi(x)\in C_0^\infty(\R^3,[0,1])$, such that $\varphi(x)\equiv 1$ for $|x|\leq1$ and $\varphi(x)\equiv0$ for $|x|\geq 2$.
		For any $x_0\in\R^3$ and $\tau>0$, define
		\begin{equation}\label{3.4}
			Q_i^\tau(x):=A_i^\tau\tau^{\f32}\varphi(x-x_0)Q_i\big(\tau(x-x_0)\big),\ \ 1\leq i \leq m,
		\end{equation}
		where $A_i^\tau>0$ is chosen such that $\inte |Q_i^\tau|^2dx=1$ for $1\leq i \leq M$. By the exponential decay of $|Q_i|$ and $(Q_i,Q_j)_{L^2}=\delta_{ij}$ for $i,j=1,\cdots,m$, we obtain that
		\begin{equation}\label{3.5}
			(A_i^\tau)^2=1+Ce^{-2\sqrt{|\mu_i|}\tau}, \ \hbox{and}\ \ E_{ij}:=(Q_i^\tau,Q_j^\tau)_{L^2}=O(\tau^{-\infty})\ \ \hbox{as}\ \ \tau\to\infty,\ \mbox{for}\ \ 1\leq i, j \leq N.
		\end{equation}
		It follows from \eqref{3.5} that the Gram matrix
		\begin{equation*}
			G_{\tau}:=\left( \begin{matrix}
				1 &(Q_1^{\tau}, Q_2^{\tau})_{L^2}& \cdots & (Q_1^{\tau}, Q_m^{\tau})_{L^2} \\
				(Q_2^{\tau},Q_1^{\tau})_{L^2} & 1& \cdots & (Q_2^{\tau}, Q_m^{\tau})_{L^2} \\
				\vdots & \vdots& \ddots  & \vdots\\
				(Q_m^{\tau}, Q_1^{\tau})_{L^2} & (Q_m^{\tau}, Q_2^{\tau})_{L^2} & \cdots & 1
			\end{matrix}\right)_{m\times m}
		\end{equation*}
		is positive definite for $\tau>0$ large enough.
		Hence, for such $\tau$, defining
		\begin{equation}\label{3.7}
			\left(
			\widetilde Q_1^\tau, \cdots,  \widetilde Q_m^\tau 
			\right)
			:=\left(
			Q_1^\tau ,\cdots, Q_2^\tau 
			\right)
			G_\tau^{-\f12},\quad \widetilde\ga_\tau:=\sum\limits_{i=1}^m|\widetilde Q_i^\tau\ra\la \widetilde Q_i^\tau|,
		\end{equation}
		by the definition of $G_\tau$, we have
		\begin{equation*}
			\big(\widetilde Q_i^\tau,\widetilde Q_j^\tau\big)=\delta_{ij},\ \ i,j=1,\cdots,m.
		\end{equation*}
		Using the Taylor's expansion, thus we deduce from \eqref{3.7} that
		\begin{equation}\label{3.8}
			(\tilde Q_1^{\tau}, \cdots ,\tilde Q_m^{\tau})=(Q_1^{\tau}, \cdots ,Q_m^{\tau}) -\frac{1}{2}\big(\sum_{i=2}^mE_{i1}Q_i^{\tau}, \cdots,\sum_{i=1,i\not =j}^m E_{ij}Q_i^{\tau},\cdots,\sum_{i=1}^{m-1}E_{im}Q_i^{\tau}\big)+O({\tau}^{-\infty}).
		\end{equation}
		Following  Proposition \ref{decay}, one can  derive from  \eqref{3.4}, \eqref{3.5} and \eqref{3.8} that for $\tau>0$ large enough,
		\begin{equation*}
			\begin{aligned}
				\inte V(x)|\widetilde Q_i^\tau|^2dx&=
				\inte V(x)\Big[Q_i^{\tau}-\sum_{i\not=j}\frac{1}{2}E_{ji} Q_j^{\tau}+O(\tau^{-\infty})\Big]^2dx\\
				&=\inte V\Big(\f{x}{\tau}+x_0\Big)\varphi^2\Big(\f{x}{\tau}\Big)Q_i^2(x)dx+O(\tau^{-\infty})\\
				&\leq V(x_0)+O(\tau^{-\infty}), \quad 1\leq i \leq m,
			\end{aligned}
		\end{equation*}
		similarly, we get
		\begin{equation*}
			\inte|\nabla \widetilde Q_i^\tau|^2dx=\tau^2\inte|\nabla Q_i|^2dx+O(\tau^{-\infty})<\infty,\ \ 1\leq i \leq m,
		\end{equation*}
		and
		\begin{equation*}
			\begin{aligned}
				\inte (\rho_{\widetilde\ga_\tau}*|x|^{-2})\rho_{\widetilde\ga_\tau}dx
				&=\inte\inte\frac{\Big(\sum\limits_{i=1}^m|\widetilde Q_i^\tau|^2\Big)\Big(\sum\limits_{j=1}^m|\widetilde Q_j^\tau|^2\Big)}{|x-y|^2}dxdy\\
				&=\tau^2\inte (\rho_{\gamma_0}*|x|^{-2})\rho_{\gamma_0}dx+O(\tau^{-\infty}), 
			\end{aligned}
		\end{equation*}
		due to the fact that $Q_i\in H^1(\R^3)$ holds for $1\leq i \leq m$.
		This  implies that for $\tau>0$ large enough, $\widetilde Q_i^\tau\in \mathcal{H}$ holds for $1\leq i \leq m$.
		
		We next estimate each term of $E_K(\widetilde\ga_\tau)$.
		It follows from above calculation that
		\begin{equation}\label{3.9}
			\begin{aligned}
				E_K(\widetilde\ga_\tau)=&\mathrm{Tr}(-\Delta\widetilde\ga_\tau)+\mathrm{Tr}\big(V(x)\widetilde\ga_\tau\big)-K\inte (\rho_{\widetilde\ga_\tau}*|x|^{-2})\rho_{\widetilde\ga_\tau}dx\\
				=&\tau^2\Big[\mathrm{Tr}(-\Delta\ga_0)-K\inte(\rho_{\ga_0}*|x|^{-2})\rho_{\ga_0}dx\Big]
				+MV(x_0)+O(\tau^{-\infty})\\
				=&\frac{\mathcal{K}^{(N)}-K}{\mathcal{K}^{(N)}}\tau^2\mathrm{Tr}(-\Delta\ga_0)+MV(x_0)+O(\tau^{-\infty})\ \ \hbox{as}\ \ \tau\to\infty,
			\end{aligned}
		\end{equation}
		due to the fact that $\ga_0$ is a minimizer of $\mathcal{K}^{(N)}$ with $\|\ga_0\|=1$, and the function $x\mapsto V(x)\varphi^2(x-x_0)$ is bounded and has compact support. 
		Choosing $x_0\in \mathbb{R}^3$ such that $V(x_0)=0$ and $$\tau=(\mathcal{K}^{(N)}-K)^{-\f14}\to\infty \quad\text{ as $K\nearrow \mathcal{K}^{(N)}$},$$  then we deduce that 
		\begin{equation}\label{ktokn}
			e_K(N)\leq E_K(\tilde \gamma_\tau) \leq
			C(\mathcal{K}^{(N)}-K)^\frac12 \to 0\quad \text{as\ } K\nearrow \mathcal{K}^{(N)}. 
		\end{equation}
		If $\Rank \gamma_K\le \lfloor N/2 \rfloor$, from (iv) in Theorem \ref{thm:GNS} we know that $ \mathcal{K}^{(\lfloor N/2 \rfloor)}>\mathcal{K}^{(N)}$.   then we have
		\begin{equation*}
			\begin{split}
				E_K(\gamma_K)&\ge (1-\frac{K}{\mathcal{K}^{(\lfloor N/2 \rfloor)}})\Tr(-\Delta\gamma_K)+\int_{\mathbb{R}^3}V(x)\rho_{\gamma_K} dx\\
                &\ge \frac{\mathcal{K}^{(\lfloor N/2 \rfloor)}-\mathcal{K}^{(N)}}{\mathcal{K}^{(\lfloor N/2 \rfloor)}}\Big(\Tr(-\Delta\gamma_K)+\int_{\mathbb{R}^3}V(x)\rho_{\gamma_K} dx\Big)
				\geq \frac{\mathcal{K}^{(\lfloor N/2 \rfloor)}-\mathcal{K}^{(N)}}{\mathcal{K}^{(\lfloor N/2 \rfloor)}}\mu_V.
			\end{split}
		\end{equation*}
		This contradicts \eqref{ktokn}.
		
		\textbf{3.} Following \eqref{3.9}, one can obtain that for $K> \mathcal{K}^{(N)}$,
		\begin{equation*}
			e_K(N)\leq \lim\limits_{\tau\to\infty} E_K(\widetilde\ga_\tau)
			=\lim\limits_{\tau\to\infty}\Big\{\mathrm{Tr}(-\Delta+V(x))\widetilde\ga_\tau-K\inte (\rho_{\widetilde\ga_\tau}*|x|^{-2})\rho_{\widetilde\ga_\tau}dx\Big\}	=-\infty,
		\end{equation*}
		and hence there is no  minimizer of $e_K(N)$ for $K>\mathcal{K}^{(N)}$.

		As for the case $K=\mathcal{K}^{(N)}$, we choose $x_0\in\R^3$ such that $V(x_0)=0$, 
		it then follows from \eqref{2.1} and \eqref{3.9} that $e_{\mathcal{K}^{(N)}}(N)=0$. We assume that $\ga=\sum_{i=1}^M|u_i\ra\la u_i|$, where $u_i\in\mathcal{H}$ and  $(u_i,u_j)=\delta_{ij}$ for $i,j=1,\cdots,M$, is a minimizer of $e_{\mathcal{K}^{(N)}}(N)$. We then obtain from \eqref{eq:critical} that,
		\begin{equation}\label{3.10}
			\mathrm{Tr}(-\Delta\ga)=\mathcal{K}^{(N)}\inte(\rho_\ga*|x|^{-2})\rho_{\ga}dx,
		\end{equation}
		so that
		\begin{equation}\label{3.11}
			\inte V(x)\rho_{\ga}(x)dx=0.
		\end{equation}
		Following \eqref{3.10}, one can obtain that $u_1(x)$ is the first eigenfunctions of the operator $-\Delta-2\mathcal{K}^{(N)}(\rho_\ga*|x|^{-2})$ in $\R^3$. In view of the fact (cf. \cite[Theorem 11.8]{Lieb}) that $u_1^2(x)>0$ in $\R^3$. Combining with $\lim_{|x|\to\infty}V(x)=\infty$, this indicates that \eqref{3.11} does not hold, therefore there is no minimizer for $e_{\mathcal{K}^{(N)}}(N)$. This completes the proof of Theorem \ref{thm:existence}.
	\end{proof}
	
	\section{Asymptotic behavior of minimizers}\label{sec:4}
	We now turn to the behavior of minimizers as the coupling strength
	approaches the critical value $\mathcal K^{(N)}$ from below.
	The first objective is to identify the natural blow-up scale and to
	show that, after a suitable rescaling and translation, the minimizing
	sequence has a nontrivial limit. We then prove that every
	nonzero limit is an optimizer of the critical finite-rank
	GNS inequality and that the concentration points approach the zero
	set of the trapping potential.
	
	\begin{proof}[Proof of Theorem \ref{thm:concentration}]
		\textbf{(i).}
		If (\ref{3:e}) is false, then there exists a sequence $\{K_n\}$, where $K_n\nearrow \mathcal{K}^{(N)}$ as $n\to\infty$, such that  $\{u_i^{n}(x)\}$ is bounded uniformly in $\mathcal{H}$. By applying  compactness result (see e.g. \cite[Theorem XIII.67]{Reed1}),  there exist a subsequence (still denoted by $\{K_n\}$) of $\{K_n\}$ and $u_0\in \mathcal{H}$ such that
		$$u_i^{n}\overset{n}\rightharpoonup u_i^0 \quad \text{weakly in }\mathcal{H} \quad \text{and}\quad u_i^{n}\overset{n}\rightarrow u_i^0\quad \text{strongly in }L^2(\R^3)\,\quad \text{for}\ 1\leq i \leq N,$$
		and
		\begin{equation*}%
			\rho_{\ga_{n}}\to\rho_{\ga_0}:=\sum_{i=1}^{M}(u_i^{0})^2\ \ \hbox{strongly in} \ \ L^1_{loc}(\R^3)\ \ \hbox{as}\ \ n\to\infty,
		\end{equation*}
		Consequently,
		$$0=e_{\mathcal{K}^{(N)}}(N)\leq E_{\mathcal{K}^{(N)}}(\gamma_0)\leq\lim_{k\to\infty} E_{K_n}(\gamma_{K_n})=\lim_{k\to\infty} e_{K_n}(N)=0,$$
		where $\gamma_0=\sum_{i=1}^M|u_i^0\ra\la u_i^0|, \gamma_{n}=\sum_{i=1}^M|u_i^{n}\ra\la u_i^{n}|.$
		Since Theorem \ref{thm:existence}, $e_K(N)\to0$ as $K_n\nearrow \mathcal{K}^{(N)}$. This then indicates that $\gamma_0$ is a minimizer of $e_{\mathcal{K}^{(N)}}(N)$, which is impossible since Theorem \ref{thm:existence} shows that $e_{\mathcal{K}^{(N)}}(N)$ cannot be attained. So, part \textbf{(i)} is proved.

		\textbf{(ii).}
		Let $\bar w_i^n$ be defined by \eqref{3:2} and as $K_n\nearrow \mathcal{K}^{(N)}$, we have
		\begin{equation*}
			0\leq\mathrm{Tr}(-\Delta\ga_n)-K_n\int_{\R^3}(\rho_{\gamma_n}*|x|^{-2})\rho_{\gamma_n}dx=\varepsilon_n
			^{-2}-K_n\int_{\R^3}(\rho_{\gamma_n}*|x|^{-2})\rho_{\gamma_n}dx\leq
			e_K(N)\rightarrow 0.
		\end{equation*}
		We thus obtain from (\ref{3:e}) that
		\begin{equation*}
			K_n\int_{\R^3}(\rho_{\gamma_n}*|x|^{-2})\rho_{\gamma_n}dx\cdot\varepsilon^{2}_n \to 1 \quad \text{ as }\quad
			K_n\nearrow \mathcal{K}^{(N)}.
		\end{equation*}
		To complete the proof of (ii), we first claim that 
		\begin{equation}\label{1.12}				\liminf_{K_n\nearrow \mathcal{K}^{(N)}}\int_{B_{R_0}(0)}\rho_{\bar\ga_n}(x)dx\geq \eta>0,		
		\end{equation}
		Indeed, we follows from \eqref{3:2} that
		\begin{equation}\label{3.13}
			\mathrm{Tr}(-\Delta\bar\ga_n)= 1\ \ \hbox{and}\ \  \frac{1}{2K_n}\leq\int_{\R^3}(\rho_{\widetilde\gamma_n}*|x|^{-2})\rho_{\widetilde\gamma_n}dx\leq \frac{2}{K_n}\ \ \hbox{as}\ \ K_n\nearrow \mathcal{K}^{(N)}.
		\end{equation}
		We first claim that
		\begin{equation}\label{3.25}
			\mathrm{Tr}(-\Delta\bar\ga_n)=\mathcal{K}^{(N)}\inte(\rho_{\bar\ga_n}* |x|^{-2})\rho_{\bar\ga_n}dx+o(1)\ \ \hbox{as}\ \ K_n\nearrow \mathcal{K}^{(N)},
		\end{equation}
		where $\bar\ga_n$ is defined by \eqref{3:2}.
		Indeed, note that
		$\ga_n=\sum_{i=1}^m|u_i^n\ra\la u_i^n|$ is a minimizer of $e_K(N)$, where $\{u_i^n\}$ satisfies the following system 
		\begin{equation}\label{3.15}
			-\Delta u_i^n+V(x)u_i^n-2K_n\rho_{\ga_n} * |x|^{-2}u_i^n=\mu_i^nu_i^n\ \ \hbox{in}\ \ \R^3,\ \ 1\leq i \leq m.
		\end{equation}
		Here $\rho_{\ga_n}(x)=\sum_{i=1}^m|u_i^n(x)|^2$, and $\mu_i^n$ are the i-first eigenvalues of the operator $-\Delta+V(x)-2K_n\rho_{\ga_n} * |x|^{-2}$ in $\R^3$.
		We hence deduce from \eqref{3.15} and as $K_n\nearrow \mathcal{K}^{(N)}$ that
		\begin{equation}\label{3.17}
			\sum\limits_{i=1}^M\mu_i^n\van^2
			=\van^2E_K(N)-K_n\van^2\inte(\rho_{\ga_n}* |x|^{-2})\rho_{\ga_n}dx
			=-K_n\inte(\rho_{\bar\ga_n}* |x|^{-2})\rho_{\bar\ga_n}dx+o(1).
		\end{equation}

		On the other hand, we obtain from \eqref{3:2} and   \eqref{3.15}  that
		\begin{equation}\label{3.16}
			-\Delta \bar w_i^n+\van^2 V(\van x+\bar x_a)\bar w_i^n-2K\rho_{\bar\ga_n}*|x|^{-2}\bar w_i^n=\mu_i^n\van^2\bar w_i^n\ \ \hbox{in}\ \ \R^3,\ \ 1\leq i \leq M,
		\end{equation}
		which implies that
		\begin{equation*}
			\mathrm{Tr}(-\Delta\bar\ga_n)+\van^2\inte V(\van x+\bar x_a)\rho_{\bar\ga_n}(x)dx-2K_n\inte(\rho_{\bar\ga_n}* |x|^{-2})\rho_{\bar\ga_n}dx
			=\sum\limits_{i=1}^m\mu_i^n\van^2.
		\end{equation*}
		It follows from  $e_K(N) \to 0$ as $K_n\nearrow \mathcal{K}^{(N)}$ that
		\begin{equation*}
			\van^2\inte V(\van x+\bar x_a)\rho_{\bar\ga_n}(x)dx\to0\ \ \hbox{as}\ \ K_n\nearrow \mathcal{K}^{(N)},
		\end{equation*}
		which gives that
		\begin{equation*}
			\mathrm{Tr}(-\Delta\bar\ga_n)-2K_n\inte(\rho_{\bar\ga_n}* |x|^{-2})\rho_{\bar\ga_n}dx
			=\sum\limits_{i=1}^m\mu_i^n\van^2+o(1)\ \ \hbox{as}\ \ K_n\nearrow \mathcal{K}^{(N)}.
		\end{equation*}
		Combine  with \eqref{3.17} yields that
		\begin{equation*}
			1=\mathrm{Tr}(-\Delta\bar\ga_n)=\mathcal{K}^{(N)}\inte(\rho_{\bar\ga_n}* |x|^{-2})\rho_{\bar\ga_n}dx+o(1)\ ,\ \sum\limits_{i=1}^m\mu_i^n\van^2=-1\ \ \hbox{as}\ \ K_n\nearrow \mathcal{K}^{(N)},
		\end{equation*}
		and hence the claim \eqref{3.25} holds true.
		Otherwise, the Hoffmann-Ostenhof  inequality \cite{Hof} gives that
		\begin{equation*}
			\mathrm{Tr}(-\Delta \bar\ga_n)\geq \inte |\nabla\sqrt{\rho_{\bar\ga_n}}|^2dx.
		\end{equation*}
		Thus, we deduce from \eqref{3.13} that the sequence $\{\sqrt{\rho_{\bar\ga_n}}\}$ is bounded uniformly in $H^1(\R^3)$ as $K_n\nearrow \mathcal{K}^{(N)}$.
		
		We next claim that there exist a sequence $\{y_{\van} \}\subset\R^3$,  $R_0>0$ and $\eta>0$ such that
		\begin{equation}\label{3.22}
			\liminf_{K_n\nearrow \mathcal{K}^{(N)}}\int_{B_{R_0}(y_{\van})}\rho_{\bar\ga_n}(x)dx\geq \eta>0.
		\end{equation}
		Indeed, if \eqref{3.22} is not true, then for any $R>0$, there exists a sequence $\{K_n\}$, where $K_n\nearrow \mathcal{K}^{(N)}$ as $n\to\infty$, such that
		\begin{equation*}
			\lim\limits_{n\to\infty}\sup_{y\in\R^3}\int_{B_R(y)}\rho_{\widetilde\ga_{n}}(x) dx=0.
		\end{equation*}
		Since the sequence $\{\sqrt{\rho_{\widetilde\ga_{n}}}\}$ is bounded uniformly in $H^1(\R^3)$ as $n\to\infty$, we derive from \cite[Theorem 1.21]{Willem} that $\rho_{\widetilde\ga_{K_n}}(x)\to0$ strongly in $L^q(\R^3)$ as $n\to\infty$ for $1<q<3$. This is however a contradiction in view of \eqref{3.13}. We therefore obtain that the claim \eqref{3.22} holds true, which further yields that \eqref{1.12} holds true.

		\textbf{(iii).} We first prove that \eqref{03-1} holds true.
		On the contrary, assume that  \eqref{03-1} is not true. Then there exist   a sequence $\{K_n\}$, where $K_n\nearrow \mathcal{K}^{(N)}$ as $n\to\infty$, and a constant $\delta>0$ such that
		\begin{equation*}
			\mathrm{dist}(\bar x_n,\mathcal{A})\geq\delta>0\ \ \hbox{as}\ \ n\to\infty,
		\end{equation*}
		which yields that there exists a constant $C(\delta)>0$ such that
		\begin{equation*}
			V(\bar x_n)\geq C(\delta)>0\ \ \hbox{as}\ \ n\to\infty.
		\end{equation*}
		By Fatou's lemma, we therefore derive from \eqref{1.12} that
		\begin{equation*}\label{3.23}
			\begin{aligned}
				\liminf_{n\to\infty}\inte V(\va_n x+\bar x_n)\rho_{\bar\ga_n}(x)dx
				&\geq\int_{B_{R_0}(0)}\liminf_{n\to\infty}V(\va_n x+\bar x_n)\rho_{\bar\ga_n}(x)dx\geq \f{C(\delta)}{2}\eta>0.
			\end{aligned}
		\end{equation*}
		On the other hand, one can deduce from \eqref{eq:critical} and Theorem \ref{thm:existence} that
		\begin{equation*}
			0<\inte V(\va_n x+\bar x_n)\rho_{\bar\ga_n}(x)dx=\inte V(x)\rho_{\ga_n}(x)dx\leq e_{K_n}(N)\to0\ \ \hbox{as}\ \ n\to\infty,
		\end{equation*}
		which however contradicts with \eqref{3.13}, and hence  \eqref{03-1} holds true.
		We now focus on the proof of \eqref{3-1}. Towards this purpose, 
		
		Let $\{K_n\}$ be any sequence satisfying $K_n\nearrow \mathcal{K}^{(N)}$ as $n\to\infty$. It follows from \eqref{03-1} that there exist a subsequence, still denoted by $\{K_n\}$, of $\{K_n\}$ and a point $x_0\in\mathcal{A}$ such that
		\begin{equation*}\label{03-2}
			\bar x_{n}\to x_0\ \ \hbox{as}\ \ n\to\infty.
		\end{equation*}
		From \eqref{3.22} we see  that there exists a positive operator $0\not =\bar{\gamma}\in \mathcal{S}^1$ such that 
		\begin{equation*}
			\bar \gamma_n\overset{\star}\rightharpoonup \bar{\gamma} \ \text{ in } \mathcal{S}^1\ \text{ and }\  \rho_{\bar \gamma_n} \to \rho_{\bar{\gamma}} \text{ in } L_{\rm loc}^q (\mathbb{R}^3)\ \ \forall \ q\in[1,3),\  \text{ as } n\to\infty.
		\end{equation*}
		Thus we obtain that
		\begin{equation}\label{3.32}
			\mathrm{Tr}(-\Delta\bar\ga_n)=\mathrm{Tr}(-\Delta\bar\ga)
			+\mathrm{Tr}(-\Delta(\bar \gamma_n- \bar{\gamma}))
			+o(1)\ \ \hbox{as}\ \ n\to\infty.
		\end{equation}
		and
		\begin{equation}\label{3.33}
			\int_{\R^3}(\rho_{\bar\gamma_n}*|x|^{-2})\rho_{\bar\gamma_n} dx=\int_{\R^3}(\rho_{\bar\gamma}*|x|^{-2})\rho_{\bar\gamma} dx+\int_{\R^3}(\rho_{(\bar \gamma_n- \bar{\gamma})}*|x|^{-2})\rho_{(\bar \gamma_n- \bar{\gamma})} dx+o(1)
		\end{equation}
		Since $\|\bar\ga\|\leq\liminf_{n\to\infty}\|\bar\ga_n\|=1$ and $\|\eta_n\bar\ga_n\eta_n\|\leq \|\bar\ga_n\|=1$, we obtain from \eqref{eq:critical}, \eqref{3.32} and \eqref{3.33} that
		\begin{equation}\label{3.28}
			\begin{aligned}
				0=&\lim\limits_{n\to\infty}\Big\{\mathrm{Tr}(-\Delta\bar\ga_n)
				-K_n\inte(\rho_{\bar\gamma_n}*|x|^{-2})\rho_{\bar\gamma_n}dx\Big\}\\
				\geq&\mathrm{Tr}(-\Delta\bar\ga)-\mathcal{K}^{(N)}\inte(\rho_{\bar\gamma}*|x|^{-2})\rho_{\bar\gamma}dx\\
                &+\lim\limits_{n\to\infty}\Big\{\mathrm{Tr}(-\Delta(\bar \gamma_n- \bar{\gamma}))-K_n\int_{\R^3}(\rho_{(\bar \gamma_n- \bar{\gamma})}*|x|^{-2})\rho_{(\bar \gamma_n- \bar{\gamma})} dx\Big\}\\
				\geq& \|\bar\ga\|\mathrm{Tr}(-\Delta\bar\ga)-\mathcal{K}^{(N)}\inte(\rho_{\bar\gamma}*|x|^{-2})\rho_{\bar\gamma}dx\geq0,
			\end{aligned}
		\end{equation}
		which implies that $\bar\ga$ is a minimizer of $\mathcal{K}^{(N)}$ and $\|\bar\ga\|=1$. 
		This together with the fact that $\gamma$ is finite rank operator implies that $\gamma\in\mathcal{S}^1$ with $\|\gamma\|=1$     is an optimizer of (\ref{eq:critical}) and 
		\begin{equation}\label{3.29}
			\lim_{k\to\infty}\Big[\text{Tr}\big(-\Delta(\bar \gamma_n- \bar{\gamma})\big)-K_n\int_{\mathbb{R}^3}( \rho_{\bar \gamma_n- \bar{\gamma}})*|x|^{-2}\rho_{\bar \gamma_n- \bar{\gamma}}dx\Big]=0.
		\end{equation}
		We denote   
		$$\gamma=\sum_{i=1}^M|\bar w_i \rangle \langle \bar w_i|,\quad (\bar w_i,\bar w_j)=\delta_{ij},\ i,j=1,2,\cdots,M,  $$ where $ \lfloor N/2 \rfloor+1\le M\le m$  by \eqref{2n<n}, and $\bar w_i$ is $i$-th eigenfunction of $H_{2,\gamma}$.
		
		We claim that 		\begin{equation}\label{3.30}			\Tr\big(-\Delta(\bar \gamma_n- \bar{\gamma})\big)=\Tr(-\Delta\bar \gamma_n)-\Tr(-\Delta\gamma)=o_n(1).		\end{equation}		For otherwise, it follows  from \eqref{3.29} that there exists $C>0$ such that 		$$\Tr\big(-\Delta(\bar \gamma_n- \bar{\gamma})\big), \int_{\mathbb{R}^3}( \rho_{\bar \gamma_n- \bar{\gamma}})*|x|^{-2}\rho_{\bar \gamma_n- \bar{\gamma}}dx\geq C>0. $$		Proceeding the same arguments between \eqref{3.22} and \eqref{3.28},  one can derive that there exist $\{z'_n\}\subset\mathbb{R}^3$ satisfying $|z'_n|\overset{n}\to\infty$ and  $\bar \gamma\in \mathcal{S}^1$ such that, by passing to a subsequence,		\begin{equation*}			( \bar \gamma_n- \bar{\gamma}) (\cdot-z_n')\overset{\star}\rightharpoonup \bar \gamma\not=0,\  \text{in}\ \mathcal{S}^{1}, \text{   as}\ n\to \infty		\end{equation*}		and $\bar \gamma\in \mathcal{S}^1$ with  $\|\bar \gamma\|=1$  is an optimizer of (\ref{eq:critical}).	Then, we can use \eqref{2n<n} to deduce the follow contradiction:		\begin{equation*}			m\equiv \Tr(\bar \gamma_n)=\int_{\mathbb{R}^3}\bar \rho_ndx \ge \int_{\mathbb{R}^3}\rho_\gamma dx+ \int_{\mathbb{R}^3} \rho_{\bar\gamma}dx=\Tr(\gamma)+\Tr(\bar \gamma)> N\ge m.		\end{equation*}		From \eqref{3.33}, \eqref{3.29} and \eqref{3.30}, we deduce that  
		\begin{equation*}
			\lim_{n\to\infty}\Tr(-\Delta\bar \gamma_n)=\Tr(-\Delta\bar\gamma),\quad \lim_{n\to\infty}\int_{\mathbb{R}^3}(\rho_{\bar \gamma_n}*|x|^{-2})\rho_{\bar \gamma_n}dx=\int_{\mathbb{R}^3}(\rho_{\bar \gamma}*|x|^{-2})\rho_{ \bar\gamma}dx.
		\end{equation*}
		
		Finally, if $m=M$, it follows from  $\Tr \bar \gamma_n=\Tr \bar\gamma=m$ that  $\bar \gamma_n\overset{n}\to \bar\gamma $ in $\mathcal{S}^{1}$, and thus
		\begin{equation*}
			\bar w_{i}^n(x)\to \bar w_i(x)\ \text{in}\ L^2(\mathbb{R}^3) \ \text{ for all }i=1,2,\cdots,m, \  \text{   as}\ \ n\to \infty. 
		\end{equation*}
		Then, one can further use \eqref{3.29} to obtain \eqref{3-1}.
	\end{proof}

	\section{Refined blow-up behavior under different trapping potentials}\label{sec:5}
	 
	In this section, under the different assumptions on the potential $V$, we prove Theorem \ref{thm:rates} and investigate how the structure of the potential affects the concentration behavior of minimizers.

	\begin{proof}[Proof of Theorem \ref{thm:rates}]
		\textbf{(i).}
        In order to derive the upper bound of energy, we take the test function $\widetilde{\gamma}_\tau=\sum_{i=1}^M|\widetilde Q_i^\tau\ra\la \widetilde Q_i^\tau|$, where the point $x_0$ in \eqref{3.4} is chosen such that $x_0\in\mathcal{Z}$ defined in \eqref{V2}. Choose sufficiently small $ R>0$ so that
		\begin{equation*}
			V(x)\leq C_1|x-x_0|^p\ \ \hbox{for}\ \ |x-x_0|\leq R.
		\end{equation*}
		We therefore obtain from \eqref{3.4}, \eqref{3.5} and \eqref{3.8} that
		\begin{equation*}
			\begin{aligned}
				\mathrm{Tr}(V\widetilde\ga_\tau)
				&=\sum\limits_{i=1}^M\inte V(x)|\widetilde Q_i^\tau(x)|^2dx\\
				&=\sum\limits_{i=1}^M\inte V\l(\f{x}{\tau}+x_0\r)\varphi^2\l(\f{x}{\tau}\r)Q_i^2(x)dx+O(\tau^{-\infty})\\
				&\leq C_1\tau^{-p}\inte|x|^p\rho_{\ga}(x) dx+O(\tau^{-\infty})\ \ \hbox{as}\ \ \tau\to\infty,
			\end{aligned}
		\end{equation*}
		which then yields from \eqref{eq:critical} that
		\begin{equation*}
			E_K(\gamma)\leq E_K(\widetilde\ga_\tau)
			\leq (\mathcal{K}^{(N)}-K)\tau^2\inte(\rho_{\gamma}*|x|^{-2})\rho_{\gamma}dxdx +C_1\tau^{-p}\inte|x|^p\rho_{\ga}(x) dx+O(\tau^{-\infty})\ \hbox{as}\ \tau\to\infty.
		\end{equation*}
		Setting $\tau=(\mathcal{K}^{(N)}-K)^{-\f{1}{p+2}}>0$ into the above estimate, we get $$E_K(\gamma)\leq M (\mathcal{K}^{(N)}-K)^{\f{p}{p+2}}\ \ \hbox{as}\ \ K\nearrow \mathcal{K}^{(N)}.$$
		Define for $1\leq i \leq m,$
		\begin{equation*}\label{3.44}
			\bar w_i^{n}(x):=\va_{n}^{\f32}u_i^{n}(\va_{n} x+\bar x_{n}),\quad\mbox{and}\quad\bar\ga_{n}:=\sum\limits_{i=1}^m| \bar w_i^{n}\ra\la \bar w_i^{n}|,
		\end{equation*}
		where  $\bar w_i^{n}(x)$ and $\bar x_{n}\in\R^3$ are as in  \eqref{3-1}.
		By direct calculations, we deduce from \eqref{eq:critical} that
		\begin{equation}\label{4.3}
			\begin{aligned}
				e_{K_n}(N)&=\mathrm{Tr}\big(-\Delta+V(x)\big)\ga_n-K_n\inte(\rho_{\gamma_n}*|x|^{-2})\rho_{\gamma_n}dx\\
				&= \frac{\mathcal{K}^{(N)}-K_n}{\mathcal{K}^{(N)}}\varepsilon_n^{-2}+\inte V(\varepsilon_n x+\bar x_n)\rho_{\bar\ga_n}dx.
			\end{aligned}
		\end{equation}
		Next we claim that
		\begin{equation}\label{m=M}
			p_{j_0}=p,\quad \lim_{n\to\infty}\frac{|\bar x_n-x_{j_0}|}{\varepsilon_n}<\infty,\quad
			\lim_{n\to\infty}\inte \rho_{ \bar\gamma_n}dx=\inte \rho_{\gamma}dx.
		\end{equation}
		Indeed, suppose that \eqref{m=M} is false. Then, up to a subsequence, at least one of the following alternatives occurs:
		\begin{equation*}
			p_{j_0}<p,\qquad
			\frac{|\bar x_n-x_{j_0}|}{\varepsilon_n}\to\infty,\qquad\mbox{or}\qquad
			\liminf_{n\to\infty}\left(\inte\rho_{\bar\gamma_n}dx-\inte\rho_{\gamma}dx\right)>0.
		\end{equation*}
		By the local form of $V$ in \eqref{V2}, the local convergence of $\rho_{\bar\gamma_n}$ to $\rho_\gamma$, and the fact that $\rho_\gamma\not\equiv0$, in any one of the above cases we can deduce that, for every fixed constant $C>0$,
		\begin{equation}\label{4.14}
			\liminf_{n\to\infty}\varepsilon_n^{-p}\inte V(\varepsilon_nx+\bar x_n)\rho_{\bar\ga_n}dx\geq C.
		\end{equation}
		Indeed, if $p_{j_0}<p$, the factor $\varepsilon_n^{p_{j_0}-p}$ diverges to $+\infty$. If $p_{j_0}=p$ but $|\bar x_n-x_{j_0}|/\varepsilon_n\to\infty$, then the factor
		\begin{equation*}
			\Big|x+\frac{\bar x_n-x_{j_0}}{\varepsilon_n}\Big|^p
		\end{equation*}
		diverges on every fixed ball carrying a positive amount of the limiting density $\rho_\gamma$. Finally, if the first two alternatives are excluded but
		\begin{equation*}
			\liminf_{n\to\infty}\left(\inte\rho_{\bar\gamma_n}dx-\inte\rho_{\gamma}dx\right)>0,
		\end{equation*}
		then a positive amount of mass of $\rho_{\bar\gamma_n}$ escapes every fixed ball in the rescaled variables, and the same weighted-potential estimate yields \eqref{4.14}.
		On the other hand, by the definition of $\mathcal{K}^{(N)}$ and \eqref{4.14}, for all sufficiently large $n$,
		\begin{equation*}
			e_{K_n}(N)\geq \frac{\mathcal{K}^{(N)}-K_n}{\mathcal{K}^{(N)}}\varepsilon_n^{-2}+C\varepsilon_n^p.
		\end{equation*}
		Minimizing the right-hand side with respect to $\varepsilon_n>0$, we obtain
		\begin{equation*}
			e_{K_n}(N)\geq C_p C^{\frac{2}{p+2}}\big(\mathcal{K}^{(N)}-K_n\big)^{\frac{p}{p+2}},
		\end{equation*}
		where $C_p>0$ is independent of $C$ and $n$. Since $C>0$ can be chosen arbitrarily large, this contradicts the upper bound
		\begin{equation*}
			e_{K_n}(N)\leq M\big(\mathcal{K}^{(N)}-K_n\big)^{\frac{p}{p+2}}
		\end{equation*}
		derived above. Hence \eqref{m=M} holds. In particular,
		\begin{equation*}
			m=\inte\rho_{\bar\gamma_n}dx=\inte\rho_\gamma dx=M.
		\end{equation*}
		It follows from \eqref{m=M} that there exist a subsequence, still denoted by $\{K_n\}$, and a point $\bar x\in\R^3$ such that
		\begin{equation}\label{4.13}
			\lim\limits_{n\to\infty}\f{\bar x_n-x_{j_0}}{\varepsilon_n}=\bar x.
		\end{equation}
		We then obtain from \eqref{4.13} and \eqref{eq:lambda1} that
		\begin{equation}\label{4.5}
			\begin{aligned}
				&\liminf_{n\to\infty}\varepsilon_n^{-p}\inte V(\varepsilon_nx+\bar x_n)\rho_{\bar\ga_n}dx\\
				=&\liminf_{n\to\infty}\inte \f{V(\varepsilon_nx+\bar x_n)}{|\varepsilon_nx+\bar x_n-x_{j_0}|^{p}}\Big|x
				+\f{\bar x_n-x_{j_0}}{\varepsilon_n}\Big|^{p}\rho_{\bar\ga_n}dx\\
				\geq&\al_{j_0}\inte |x+\bar x|^{p}\rho_\ga dx\geq\al_0\inte |x+\bar x|^{p}\rho_\ga dx,
			\end{aligned}
		\end{equation}
		where $\ga=\sum_{i=1}^M|w_i\ra\la w_i|$, and all above identities hold, if and only if  $\al_{j_0}=\al_0$ is as in \eqref{eq:lambda1}.
		We thus deduce from \eqref{4.3} and \eqref{4.5} that
		\begin{equation}\label{4.11}
			\begin{aligned}
				\liminf_{n\to\infty}{e_{K_n}(N)}
				&\geq\frac{\mathcal{K}^{(N)}-K_n}{\mathcal{K}^{(N)}}\varepsilon_n^{-2}+ \al_0\inte |x+\bar x|^{p}\rho_\ga dx\cdot \varepsilon_n^p\\
				&\geq \frac{p+2}{p\mathcal{K}^{(N)}}
				M_1^{-\frac{2}{p+2}}
				(\mathcal{K}^{(N)}-K_n)^{\frac{p}{p+2}}.
			\end{aligned}
		\end{equation}
		For $M_1=\frac{2 }{ \al_0 p\mathcal{K}^{(N)}\inte |x+\bar x|^{p}\rho_\ga dx } $.
		
		
		On the other hand, defining
		\begin{equation*}
			u_i(x)=\varepsilon_n^{-\f32}w_{i}\Big(\f{x-x_m}{\varepsilon_n}-\bar x\Big),\ \ 1\leq i \leq M,
		\end{equation*}
		where $x_m\in\Lambda_1$ is as in \eqref{V2},
		choose $\ga_1=\sum_{i=1}^M|u_i\ra\la u_i|$ as a trail operator of $e_{K_n}(N)$, and assume $\ga=\sum_{i=1}^M|w_i\ra\la w_i|$ 
		and $\mathrm{Tr}(-\Delta\ga)=1$. 
		We then deduce that
		\begin{equation*}
			\begin{aligned}
				e_{K_n}(N)&\leq \mathrm{Tr}\big(-\Delta+V(x)\big)\ga_1-K_n\inte(\rho_{\gamma_1}*|x|^{-2})\rho_{\gamma_1}dx\\
				&=\varepsilon_n^{-2}\Big(\mathrm{Tr}(-\Delta\ga)-K_n\inte(\rho_{\gamma}*|x|^{-2})\rho_{\gamma}dx\Big)
				+\inte V\big(\varepsilon_n(x+\bar x)+x_m\big)\rho_\ga dx\\
				&\leq\frac{\mathcal{K}^{(N)}-K_n}{\mathcal{K}^{(N)}}\varepsilon_n^{-2}+\varepsilon_n^{p}\inte\f{V\big(\varepsilon_n(x+\bar x)+x_m\big)}{|\varepsilon_n(x+\bar x)+x_m-x_m|^p}|x+\bar x|^p\rho_\ga dx,
			\end{aligned}
		\end{equation*}
		which yields that
		\begin{equation}\label{4.12}
			\limsup_{n\to\infty}{e_{K_n}(N)}\leq \frac{p+2}{p\mathcal{K}^{(N)}}
			M_1^{-\frac{2}{p+2}}
			\bigl(\mathcal{K}^{(N)}-K_n\bigr)^{\frac{p}{p+2}}.
		\end{equation}
		We thus conclude from \eqref{4.11} and \eqref{4.12} that
		\begin{equation*}
			\lim\limits_{n\to\infty}e_{K_n}(N)=\frac{p+2}{p\mathcal{K}^{(N)}}
			M_1^{-\frac{2}{p+2}}
			\bigl(\mathcal{K}^{(N)}-K_n\bigr)^{\frac{p}{p+2}},
		\end{equation*}
		and 
		\begin{equation*}
			\varepsilon_n=(1+o(1))[M_1( \mathcal{K}^{(N)}-K_n)]^{\frac{1}{p+2}}.
		\end{equation*}
		Together with  \eqref{4.5}, this further implies that $\al_{j_0}=\al_0$. Finally, from \eqref{m=M} we know that 
		$$m\equiv\int_{\mathbb{R}^3}\rho_{\bar\gamma_n}dx=\int_{\mathbb{R}^3}\rho_{\gamma}dx=M$$ and  then \eqref{3-1}  follows.

			\textbf{(ii).}
			The upper bound estimate for energy is similar to \eqref{V2}, \begin{equation*}
				e_K(N)\leq E_K(\widetilde\ga_\tau)
				\leq (\mathcal{K}^{(N)}-K)\tau^2\inte(\rho_{\gamma}*|x|^{-2})\rho_{\gamma}dx+ C_1\tau^{-2}\inte|x|^2\rho_{\ga}(x) dx+O(\tau^{-\infty})\ \hbox{as}\ \tau\to\infty.
			\end{equation*}
			Setting $\tau=(\mathcal{K}^{(N)}-K)^{-\f{1}{4}}>0$, we get $$E_K(\gamma)\leq M (\mathcal{K}^{(N)}-K)^{\f{1}{2}}\ \ \hbox{as}\ \ K\nearrow \mathcal{K}^{(N)}.$$
            Similar to \eqref{4.3}, we have
			\begin{equation*}
				e_{K_k}(N)= \frac{\mathcal{K}^{(N)}-K_k}{\mathcal{K}^{(N)}}\varepsilon_{k}^{-2}+\inte V(\varepsilon_{k} x+\bar x_k)\rho_{\bar\ga_k}dx.
			\end{equation*}
			
			Similarly, our claim \eqref{m=M} holds.
			We assume that there exists a subsequence still denoted by $\{K_k\}$ such that
			\begin{equation*}
				\frac{|\bar x_k|-|x_A|}{\varepsilon_k} \to C_0 \quad \text{as}\quad
				k\rightarrow\infty \end{equation*} for some constant $C_0$. Since $\rho_{\bar\ga_k}$
			decays exponentially as
			$|x|\to\infty$, we then deduce  that
			\begin{align}\label{v4a}
				\lim_{k \to \infty} \frac{1}{\varepsilon_k^{2}}\Tr(V\ga_k)\geq
				&\lim_{k \to \infty} \frac{1}{\varepsilon_k^{2}} \inte \big(|\varepsilon_k x+\bar x_k
				|-|x_A|\big)^2 \rho_{\bar\ga_k} dx \nonumber\\
				&=\lim_{k \to \infty} \inte \Big(\frac{|\varepsilon_k x+\bar
					x_k |-|\bar x_k|}{\varepsilon_k}+\frac{|\bar x_k|-|x_A|}{\varepsilon_k}\Big)^2
				\rho_{\bar\ga_k}dx \nonumber\\
				&=\inte \Big(\frac{x_A\cdot
					x}{|x_A|}+C_0\Big)^2 \rho_{\bar\ga}dx ,
			\end{align}
			without loss of generality, we assume that $ x_A=(A, 0, 0)$, and define $f(t):=\inte ( x_1+t)^2 \rho_{\bar\ga}dx$, $f(t)\in C(\R)$, there exists a constant $M_2=\inte x_1^2 \rho_{\bar\ga}dx-\frac{(\inte  x_1 \rho_{\bar\ga}dx)^2}{\inte \rho_{\bar\ga}dx}$, independent of $K_k$, such that $M_2=\inf_{t\in\R}f(t)$, then
			\[
			\int_{\R^3}V(\varepsilon_k x+\bar x_k)\rho_{\bar\ga_k}dx \geq
			M_2\varepsilon_k^2 \quad\text{as}\quad K_k\nearrow \mathcal{K}^{(N)} .\]      
			We thus deduce from \eqref{4.3} and \eqref{v4a} that
			\begin{equation*}
				\begin{aligned}
					\liminf_{k\to\infty}{e_{K_k}(N)}
					&\geq\frac{\mathcal{K}^{(N)}-K_k}{\mathcal{K}^{(N)}}\varepsilon_{k}^{-2}+ M_2\varepsilon_k^2\geq 2\sqrt{\frac{ M_2 }{\mathcal{K}^{(N)}}}(\mathcal{K}^{(N)}-K_k)^{\frac{1}{2}}\quad\text{as}\quad K_k\nearrow \mathcal{K}^{(N)}.
				\end{aligned}
			\end{equation*}
			
			Similar with the situation of \eqref{V2}, we have
			\begin{equation*}\label{4.12v4}
				\liminf_{k\to\infty}e_{K_k}(N)=2\sqrt{\frac{ M_2 }{\mathcal{K}^{(N)}}}(\mathcal{K}^{(N)}-K_k)^{\frac{1}{2}}\quad\text{as}\quad K_k\nearrow \mathcal{K}^{(N)} .\end{equation*}
			and
			\begin{equation*}
				\varepsilon_{k}=(1+o(1))(\frac{\mathcal{K}^{(N)}-K_k}{\mathcal{K}^{(N)}M_2})^{\frac{1}{4}}
			\end{equation*}
			Moreover, we also get $\int_{\mathbb{R}^3}\rho_{\bar\gamma}dx=M$ and  then \eqref{3-1}  follows.
			
			\textbf{(iii).}
			First, we consider upper bound estimates for energy. Let $\varphi(x)\in
			C^\infty_0(\R^3)$ be
			a nonnegative smooth cut-off function such that $\varphi(x)=1$ if $|x|\leq1$
			and $\varphi(x)=0$ if $|x|\geq2$. Choose $\Omega_i\in \Lambda_3$ and  $x_0\in\Omega_i$ for some $i\in\{1,\cdots ,m\}$ such that
			\begin{equation*}
				{\rm dist} (x_0, \partial \Omega_i)=R_i=R,
				\end{equation*}
			where  $R$ and $\Lambda_3$ are defined by \eqref{V4}.
			Similar with Theorem \ref{thm:existence},
			for any $x_0\in\R^3$ and $\tau>0$, define
			\begin{equation*}\label{3.4r}
				Q_i^{\tau R}(x):=A_i^{\tau R}\tau^{\f32}\varphi(\frac{x-x_0}{R})Q_i\big(\tau(x-x_0)\big),\ \ 1\leq i \leq N,\ \ \mbox{and}\ \  \ga_{\tau R}^{(m)}:=\sum\limits_{i=1}^m|Q_i^{\tau R}\ra\la Q_i^{\tau R}|,
			\end{equation*}
			where $Q_i$ is chosen such that $\mathrm{Tr}(-\Delta \ga)=1$, and $A_i^{\tau R}>0$ is chosen such that $\inte |Q_i^{\tau R}|^2dx=1$ for $1\leq i \leq N$.
			We therefore obtain that
			\begin{equation}\label{3.5r}
				(A_i^{\tau R})^2=1+Ce^{-2\sqrt{|\mu_i|}R\tau},\ \ 1\leq i \leq N,\ \ \hbox{and}\ \ a_{ij}:=(Q_i^{\tau R},Q_j^{\tau R})=Ce^{-(\sqrt{|\mu_i|}+\sqrt{|\mu_j|})R\tau}\ \ \hbox{as}\ \ \tau\to\infty,
			\end{equation}
			Similar with $G_\tau$, for $\tau>0$ large enough, we define
			\begin{equation*}
				G_{\tau R}:=\left( \begin{matrix}
					1 & \cdots & (Q_1^{\tau R}, Q_m^{\tau R})_{L^2} \\
					\vdots &  \ddots  & \vdots\\
					(Q_m^{\tau R}, Q_1^{\tau R})_{L^2} &  \cdots & 1
				\end{matrix}\right)_{m\times m}, 
			\end{equation*}
			and
			\begin{equation*}
				\left(
				\widetilde Q_1^{\tau R}, \cdots,  \widetilde Q_m^{\tau R} 
				\right)
				:=\left(
				Q_1^{\tau R} ,\cdots, Q_m^{\tau R} 
				\right)
				G_{\tau R}^{-\f12},\quad \widetilde\ga_{\tau R}:=\sum\limits_{i=1}^m|\widetilde Q_i^{\tau R}\ra\la \widetilde Q_i^{\tau R}|.
			\end{equation*}
			Combining with \eqref{3.5r}, we obtain that as $\tau\to\infty$
			\begin{equation*}
				\mathrm{Tr}(-\Delta\widetilde \ga)\leq(1+Ce^{-2\sqrt{|\mu_M|}R\tau})^2\tau^2(\mathrm{Tr}(-\Delta\ga)+C e^{-2\sqrt{|\mu_M|}R\tau})+C\tau^2 e^{-2\sqrt{|\mu_M|}R\tau}
				,
			\end{equation*}
			and
			\begin{equation*}
				\int_{\R^3}(\rho_{\widetilde\gamma}*|x|^{-2})\rho_{\widetilde\gamma}dx\geq (1+Ce^{-2\sqrt{|\mu_1|}R\tau})^2  \tau^2(\int_{\R^3}(\rho_{\gamma}*|x|^{-2})\rho_{\gamma}dx-C e^{-3\sqrt{|\mu_M|}R\tau })+C\tau^2 e^{-2\sqrt{|\mu_1|}R\tau}.
			\end{equation*}
			where $\widetilde{\gamma}=\sum_{i=1}^M|\widetilde Q_i^{\tau R}\ra\la \widetilde Q_i^{\tau R}|$. Using \eqref{eq:critical}, \eqref{3.5r}, and the two estimates above, we obtain
			\begin{equation}\label{2:limt2}
				\begin{split}
					&\mathrm{Tr}(-\Delta\widetilde \ga)-K\int_{\R^3}(\rho_{\widetilde\gamma}*|x|^{-2})\rho_{\widetilde\gamma}dx\\
					&\leq \tau^2(\mathrm{Tr}(-\Delta\ga)-K\int_{\R^3}(\rho_{\gamma}*|x|^{-2})\rho_{\gamma}dx)+C\tau^2 e^{-2\sqrt{|\mu_M |}R\tau} \\
					&=\tau^2(1-\frac{K}{\mathcal{K}^{(N)}})\mathrm{Tr}(-\Delta\ga)+C\tau^2e^{-2\sqrt{|\mu_M|}R\tau}
					\quad\text{as}\quad \tau\to\infty\,.		
				\end{split}
			\end{equation}
			On the other hand, since
			$$V\big(\frac{x}{\tau}+x_0\big)\leq \sup_{y\in B_{2R}(x_0)}V(y) \quad \text{ for all }\tau>0 \text{ and }x\in B_{2\tau R}(0)\,,$$
			By the choice of $x_0$, we therefore have
			$$V\big(\frac{x}{\tau}+x_0\big)\equiv0\quad \text{ for all } x\in B_{\tau R}(0)\,.$$
			It then follows  from the exponential decay of $Q_i$ that
			\begin{equation*}
				\begin{split}
					\mathrm{Tr}(V(x) \widetilde{\gamma})&\leq\sum_{i=1}^M(A_{i}^{\tau R})^2\int_{B_{2\tau R}(0)}V\big(\frac{x}{\tau}+x_0\big)
					\varphi^2\big(\frac{x}{\tau R}\big)Q_i^2(x)dx\\
					&\leq \sum_{i=1}^MC\int_{B_{2\tau R}(0)\setminus B_{\tau R}(0)}|Q_i(x)|^2dx\leq Ce^{-2\sqrt{|\mu_M|}R\tau}\quad\text{as}\quad \tau\to\infty\,.
				\end{split}
			\end{equation*}
			Combining with (\ref{2:limt2}), this implies that
			\begin{equation*}
				0\le E_K(\gamma)\leq \tau^2(1-\frac{K}{\mathcal{K}^{(N)}})+C\tau^2e^{-2\sqrt{|\mu_M|}R\tau}\quad\text{as}\quad \tau\to\infty\,.
			\end{equation*}
			By setting $\tau=\frac{1}{2R\sqrt{|\mu_M| }}(|\ln(\mathcal{K}^{(N)}-K)|+\ln|\ln(\mathcal{K}^{(N)}-K)|)$, we then conclude from the above that
			\begin{equation}\label{upper}
				\begin{split}
					0\le E_K(\gamma)&\leq  \frac{(1+o(1))}{4|\mu_M|R^2 \mathcal{K}^{(N)}}(\mathcal{K}^{(N)}-K)\big(|\ln(\mathcal{K}^{(N)}-K)|\big)^2+\frac{C}{2R}(\mathcal{K}^{(N)}-K)|\ln(\mathcal{K}^{(N)}-K)|\\
					&=\frac{1+o(1)}{4|\mu_M|R^2 \mathcal{K}^{(N)}}(\mathcal{K}^{(N)}-K)\big(|\ln(\mathcal{K}^{(N)}-K)|\big)^2 \quad \mbox{as}\quad K\nearrow \mathcal{K}^{(N)},
				\end{split}
			\end{equation}
			Next, we consider lower bound estimates for energy. For any  sequence
			$\{K_k\}$ with $K_k\nearrow \mathcal{K}^{(N)}$ as $k\to\infty$, we still denote $u_k:=u_{K_k}$  a nonnegative minimizer of $e_{K_k}(N)$.  It then follows from Theorem~\ref{thm:concentration} that
			there exists a subsequence of $\{K_k\}$, still denoted by $\{K_k\}$,  such that each $u_i^k$ has a
			point $\bar x_n$, which satisfies $\bar x_n\overset{k}{\to} x_0$ for some $x_0\in\bar\Omega$.
			Moreover, we have
			\begin{equation*}
				\varepsilon_k:=\Big(\mathrm{Tr}(-\Delta\ga_a)\Big)^{-\frac{1}{2}}\to0\quad \text{ as }\ k\to\infty\,,\end{equation*}
			and
			\begin{equation}\label{4.28}
				\bar w_i^k(x):=\varepsilon_k^{\f32} u_i^{k}\big(\varepsilon_k x+\bar x_n
				\big) \to Q_i(x)\ \text{ strongly in }\ H^1(\R^3)\ \text{as}\ k\to\infty.
			\end{equation}
			
			Since $x_0\in\bar\Omega$,  without loss of generality we may assume that $x_0\in\bar\Omega_{j_0}$ for some $j_0\in\{1,\cdots,m\}$. We first prove that (\ref{1:R0}) holds. Indeed, since $\bar x_n\to x_0$ as $k\to\infty$, where $x_0\in\bar\Omega_{j_0}$, we  always have
			\begin{equation*}\label{3:rel}
				R_0:=\lim_{k\to\infty}{\rm dist}(\bar x_n, \partial \Omega_{j_0})\leq R_{j_0}\leq R\,.\end{equation*}
			For any fixed $0<\delta<\min\{\frac{R_{j_0}}{5},\frac{d}{5}\}$, where $d>0$ is defined by
			\begin{equation*}
				d=\inf\big\{{\rm dist}(\Omega_i,\Omega_j);\, i\neq j\,\text{ and } \, i,j=1,\cdots, m\big\}\,.
			\end{equation*}
			Since $\bar x_n\to x_0\in\bar \Omega_{j_0}$ as $k\to\infty$,  we obtain  $\bar x_n\in \Omega_{j_0}^\delta$ for large $k$ and
			\begin{equation*}
				{\rm dist}(x_0, \partial \Omega_{j_0})=R_0,
				\end{equation*}
			where $\Omega_{j_0}^\delta:=\{x\in\R^3: {\rm dist}(x,\Omega_{j_0})<\delta\}$.
			Choosing $r_1=R_0+3\delta, \ r_2=R_0+4\delta,$
			By the definition of $R_0$, we then have
			$$\Big|\big(B_{r_2}(x_0)\setminus B_{r_1}(x_0)\big)\cap (\Omega^\delta)^c\Big|>0\,.$$
			Recalling that $\bar x_n\to x_0$ as $k\to\infty$, then for large $k>0$,
			\begin{equation*}
				\Big|\big(B_{r_2}(\bar x_n)\setminus B_{r_1}(\bar x_n)\big)\cap (\Omega^\delta)^c\Big|\geq\frac{1}{2}\Big|\big(B_{r_2}(x_0)\setminus B_{r_1}(x_0)\big)\cap (\Omega^\delta)^c\Big|>0\,.
				\end{equation*}
			In view of (\ref{3.22})  and  $\bar w_k(x)$ satisfies (\ref{3.16}) and (\ref{4.28}),   we can apply  Proposition \ref{decay} to see that 
			\begin{equation*}
				\bar w_k(x)\geq C|x|^{-1} e^{-\sqrt{|\mu_1|}|x|}\quad \text{in }\ B_{\frac{r_2}{\varepsilon_k}}(0)\setminus B_{\frac{r_1}{\varepsilon_k}}(0)
				\end{equation*}
			holds for large $k>0$.
			Moreover, note that there exists $C=C(\delta)>0$ such that
			\begin{equation*}
				V(x)\geq C\quad \text{for}\quad x\in\big(B_{r_2}(\bar x_n)\setminus B_{r_1}(\bar x_n)\big)\cap (\Omega^\delta)^c\,.
			\end{equation*}
			with the above geometric estimate, the lower decay bound, and the positivity of $V$ on
			$(\Omega^\delta)^c$, we obtain
			\begin{equation}\label{vxxj}
				\begin{split}
					&\mathrm{Tr}(V(x) \gamma)\geq \sum_{i=1}^MC(\delta)\int_{\big(B_{r_2}(\bar x_n)\setminus B_{r_1}(\bar x_n)\big)\cap (\Omega^\delta)^c}u_k^2dx\\
					&=\sum_{i=1}^MC(\delta)\int_{\big(B_{\frac{r_2}{\varepsilon_k}}(0)\setminus B_{\frac{r_1}{\varepsilon_k}}(0)\big)\cap (\Omega_{\varepsilon_k}^\delta)^c}\bar w_k^2dx\geq C(\delta,\sigma)e^{-2\sqrt{|\mu_1|}\frac{r_2}{\varepsilon_k}}\,,
				\end{split}
			\end{equation}
			where $\Omega_{\varepsilon_k}^\delta=\{x\in\R^3: \varepsilon_k x+\bar x_n\in \Omega^\delta\}$.
			It then follows from the inequality (\ref{eq:critical}) and (\ref{vxxj}) that
			\begin{equation}\label{3:fin}
				\begin{split}
					e_{K_k}(N)=E_{K_k}(\bar\gamma_k)
					&\geq
					(\frac{\mathcal{K}^{(N)}-K_k}{\mathcal{K}^{(N)}})\mathrm{Tr}(-\Delta\ga_a)+\mathrm{Tr}(V(x)  \gamma)\\
					&\geq(\frac{\mathcal{K}^{(N)}-K_k}{\mathcal{K}^{(N)}})\varepsilon_k^{-2}+Ce^{-2\sqrt{|\mu_1|}\frac{r_2}{\varepsilon_k}}\quad \text{as}\quad K_k\nearrow {\mathcal{K}^{(N)}}\,.
				\end{split}
			\end{equation}
			One can check that the function
			\begin{equation*}
				f_\theta(s)=\frac{\mathcal{K}^{(N)}-K}{\mathcal{K}^{(N)}}s^2+Ce^{-\theta s}, \quad\theta>0,\quad s\in(0,\infty)\,,
			\end{equation*}
			has a unique minimum point  $s_\theta$   satisfying
			$$s_\theta=\frac{1}{\theta}\big[|\ln(\mathcal{K}^{(N)}-K)|-\ln|\ln(\mathcal{K}^{(N)}-K)|+C(\theta)\big]\quad \text{as}\quad K\nearrow \mathcal{K}^{(N)}\,.$$
			Thus,
			$$f_\theta(s) \ge f_\theta(s_\theta)= \frac{1+o(1)}{{\mathcal{K}^{(N)}}\theta^2}(\mathcal{K}^{(N)}-K)|\ln(\mathcal{K}^{(N)}-K)|^2\quad \text{as}\quad K\nearrow \mathcal{K}^{(N)}\,.$$
			This estimate and (\ref{3:fin}) imply that
			\begin{equation}\label{4.33}
				e_{K_k}(N)\geq \frac{1+o(1)}{[2\sqrt{|\mu_1|}r_2]^2 {\mathcal{K}^{(N)}}}(\mathcal{K}^{(N)}-K_k)\big|\ln(\mathcal{K}^{(N)}-K_k)\big|^2\quad \text{as}\quad K_k\nearrow {\mathcal{K}^{(N)}}\,.
			\end{equation}
			comparing with  (\ref{upper}), we then have
			$$\sqrt{|\mu_1|}r_2=\sqrt{|\mu_1|}(R_0+4\delta)\geq \sqrt{|\mu_M|} R,\quad \text{for any}\quad 0<\delta<\min\big\{\frac{R_{j_0}}{5},\frac{d}{5}\big\}\,. $$
			This yields  that $R_0\geq \sqrt{\frac{\mu_M}{\mu_1}}R$. We therefore conclude   (\ref{1:R0}).
			Similar with the situation of \eqref{V2}, we obtain from (\ref{upper}) and (\ref{4.33}) that there exist a subsequence of $\{K_n\}$ satisfying $R_0={\rm dist}(x_0, \partial \Omega_{j_0})\geq \sqrt{\frac{\mu_M}{\mu_1}}R$ such that
			\begin{equation*}
				\bar x_n\stackrel{n}{\rightarrow}
				x_0\ \ \hbox{and}\ \ \bar w_i^{n}(x):=\va_{n}^{\f32}u_i^{n}(\va_{n} x+\bar x_n)\stackrel{n}{\rightarrow} \bar w_i(x)
			\end{equation*}
			strongly in $H^1(\R^3)$, for all $i=1, \cdots, M$.

			\end{proof}
			
			\appendix
			\section{Regularity and decay of optimizers for GNS inequality}\label{sec:A}
			
			The decay analysis below is motivated in part by the corresponding
			study of nonlinear Choquard equations in \cite{jfa2013}.
			Since a minimizer of the GNS inequality satisfies a finite system of partial differential equations,
			we provide a direct argument adapted to our setting.
			Throughout this appendix, let
			\[
			\gamma=\sum_{j=1}^{M}k_j|u_j\rangle\langle u_j|,
			\qquad
			k_j>0,
			\qquad
			\rho_\gamma=\sum_{j=1}^{M}k_j|u_j|^2,
			\]
			be a finite-rank optimizer of the Hartree GNS inequality, and assume that
			the occupied orbitals satisfy
			\begin{equation}
				-\Delta u_i-\frac{4}{\alpha}
				\bigl(\rho_\gamma*|x|^{-\alpha}\bigr)u_i
				=\mu_i u_i
				\qquad\text{in }\R^3,
				\qquad 1\le i\le M.
				\label{eq:effectiveA}
			\end{equation}
			We write
			\[
			W_\alpha(x):=
			\bigl(\rho_\gamma*|x|^{-\alpha}\bigr)(x).
			\]
			
			We first recall the Hardy--Littlewood--Sobolev estimate in the notation
			used in this paper. Since the kernel $|x|^{-\alpha}$ is, up to a
			multiplicative constant, the Riesz potential of order $3-\alpha$, if
			$1<p<3/(3-\alpha)$ and
			\[
			\frac1q=\frac1p-\frac{3-\alpha}{3},
			\]
			then
			\begin{equation}
				\bigl\||x|^{-\alpha}*f\bigr\|_{L^q(\R^3)}
				\le C_{\alpha,p}\|f\|_{L^p(\R^3)}.
				\label{eqHLS}
			\end{equation}
			
			\begin{proposition}\label{propositionRegularity}
				Let $0<\alpha<3$ and suppose that
				$u_1,\cdots,u_M\in H^1(\R^3)$ satisfy \eqref{eq:effectiveA}.
				Then, for every $i=1,\cdots,M$,
				\[
				u_i\in L^\infty(\R^3)\cap W^{2,p}_{\mathrm{loc}}(\R^3)
				\qquad\text{for every }1<p<\infty.
				\]
				Moreover, $u_i$ is uniformly continuous and
				\[
				\lim_{|x|\to\infty}u_i(x)=0.
				\]
				In particular,
				\[
				\rho_\gamma\in L^1(\R^3)\cap L^\infty(\R^3),
				\qquad
				W_\alpha\in L^\infty(\R^3).
				\]
			\end{proposition}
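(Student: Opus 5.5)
The plan is a bootstrap on the potential $W_\alpha$, followed by local elliptic estimates for the linear equations \eqref{eq:effectiveA}. Since $u_i\in H^1(\R^3)\subset L^2\cap L^6$, the density satisfies $\rho_\gamma\in L^1\cap L^3$. We first place $W_\alpha$ in a good space. Split the kernel as
\[
|x|^{-\alpha}=|x|^{-\alpha}\mathbbm{1}_{\{|x|\le1\}}+|x|^{-\alpha}\mathbbm{1}_{\{|x|>1\}}.
\]
The far part is bounded, so its convolution with $\rho_\gamma\in L^1$ lies in $L^\infty$. The near part lies in $L^s$ for every $s<3/\alpha$. Young's inequality with $\rho_\gamma\in L^1\cap L^3$ then gives $W_\alpha\in L^\infty$ as soon as the exponent conjugate to $3$, namely $3/2$, is below $3/\alpha$, i.e. $\alpha<2$. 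For $2\le\alpha<3$ we first need $\rho_\gamma\in L^{r}$ with $r>3/(3-\alpha)$, so a bootstrap is required there. In every case the near part already gives $W_\alpha\in L^{t}_{\rm loc}$ uniformly in the sense of uniformly local norms, for some $t>3/2$ (using \eqref{eqHLS} or Young). So in all cases $W_\alpha\in L^\infty+L^{t}$ with $t>3/2$.

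Next we treat each equation $-\Delta u_i=(\tfrac4\alpha W_\alpha+\mu_i)u_i$ as a linear Schrödinger equation whose potential lies in $L^{t}+L^\infty$ with $t>3/2$, a Kato-class potential. Two routes are available. The first is Brezis--Kato / Moser iteration, which gives $u_i\in L^p$ for all $p<\infty$ and then $L^\infty$ via local De Giorgi--Nash--Moser estimates, with constants uniform in the center of the ball. The second is to iterate Calderón--Zygmund: $u_i\in L^{q}$ and $W_\alpha u_i\in L^{q'}_{\rm loc}$ give $u_i\in W^{2,q'}_{\rm loc}$, and Sobolev embedding improves the integrability by a fixed amount each step. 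Once $u_i\in L^2\cap L^\infty$, we have $\rho_\gamma\in L^1\cap L^\infty$, hence $\rho_\gamma\in L^r$ for every $r$, and the kernel splitting now gives $W_\alpha\in L^\infty$ for all $0<\alpha<3$. This closes the bootstrap in the range $2\le\alpha<3$. Then $\Delta u_i\in L^p_{\rm loc}$ for all $p$, and local $L^p$ estimates yield $u_i\in W^{2,p}_{\rm loc}$ for every $1<p<\infty$.

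For uniform continuity and decay we use uniform local estimates. For any unit ball $B_1(y)$,
\[
\|u_i\|_{W^{2,p}(B_1(y))}\le C\bigl(\|u_i\|_{L^p(B_2(y))}+\|\Delta u_i\|_{L^p(B_2(y))}\bigr)\le C'\|u_i\|_{L^p(B_2(y))},
\]
with $C'$ independent of $y$ because $W_\alpha$ is globally bounded. Take $p>3$. Morrey's inequality then gives a uniform Hölder bound, hence uniform continuity. Moreover $\|u_i\|_{L^\infty(B_1(y))}\le C\|u_i\|_{L^p(B_2(y))}$, and the right side tends to $0$ as $|y|\to\infty$ since $u_i\in L^2\cap L^\infty\subset L^p$. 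This gives $u_i(x)\to0$ at infinity.

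The main obstacle is the first bootstrap step for $2\le\alpha<3$. There $H^1$ alone does not put $W_\alpha$ in $L^\infty$, and one must check that the local $L^{t}$ bound, $t>3/2$, suffices to start the Moser or Brezis--Kato iteration uniformly in space. I would handle it with the Brezis--Kato lemma: for $V\in L^{3/2}+L^\infty$, the small-norm part is absorbed by Sobolev's inequality, which gives $u_i\in L^p$ for all $p<\infty$ without needing the exact exponent.
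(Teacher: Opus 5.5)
Your proof is correct, but it reaches $u_i\in L^\infty$ by a heavier route than the paper. The paper applies the Hardy--Littlewood--Sobolev estimate \eqref{eqHLS} globally, with $p$ close to $3/(3-\alpha)$ when $\alpha\le2$ and $p=3$ when $2<\alpha<3$. This gives $W_\alpha\in L^q(\R^3)$ for some $q>3$, hence $W_\alpha u_i\in L^r$ with $r>2$. A single global Calder\'on--Zygmund estimate for $-\Delta+1$ then yields $u_i\in W^{2,r}(\R^3)\subset L^\infty$ for all $\alpha\in(0,3)$ at once. You instead split the kernel and use Young's inequality. For $2\le\alpha<3$ you then invoke Brezis--Kato or Moser iteration, or an iterated local Calder\'on--Zygmund bootstrap. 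The step you flag as the main obstacle is not actually one. With your own splitting, the near kernel $|x|^{-\alpha}\mathbbm{1}_{\{|x|\le1\}}$ lies in $L^1$ for every $\alpha<3$, so Young with $\rho_\gamma\in L^3$ already gives $W_\alpha\in L^3+L^\infty$. Hence $W_\alpha u_i\in L^2$, using $L^3\cdot L^6\subset L^2$ and $L^\infty\cdot L^2\subset L^2$, and $u_i\in H^2(\R^3)\subset L^\infty$ follows in one step with no case distinction. What Brezis--Kato buys is robustness: it only needs the potential in $L^{3/2}+L^\infty$. That is more than this problem requires, while the paper's one-step argument buys brevity. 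For the decay, the paper argues from uniform continuity together with $u_i\in L^2$. You instead use the uniform local bound $\|u_i\|_{L^\infty(B_1(y))}\le C\|u_i\|_{L^p(B_2(y))}$ and the vanishing of $L^p$ tails as $|y|\to\infty$. Both arguments are valid, and yours is slightly more quantitative.
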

			
			\begin{proof}
				Since $u_i\in H^1(\R^3)$, the Sobolev embedding gives
				$u_i\in L^s(\R^3)$ for every $2\le s\le6$. Hence
				\[
				\rho_\gamma\in L^p(\R^3)
				\qquad\text{for every }1\le p\le3.
				\]
				We claim that one can choose $p\in(1,3]$ in \eqref{eqHLS} so that the
				corresponding exponent $q$ satisfies $q>3$.
				
				Indeed, if $0<\alpha\le2$, choose
				$p<3/(3-\alpha)$ sufficiently close to $3/(3-\alpha)$.
				Then $q$ can be taken arbitrarily large. If $2<\alpha<3$, we may take
				$p=3$, which gives
				\[
				q=\frac{3}{\alpha-2}>3.
				\]
				Therefore, for some $q>3$,
				\[
				W_\alpha\in L^q(\R^3).
				\]
				
				Fix such a $q$. Since $u_i\in L^6(\R^3)$,
				\[
				W_\alpha u_i\in L^r(\R^3),
				\qquad
				\frac1r=\frac1q+\frac16<\frac12,
				\]
				and hence $r>2$. Also $u_i\in L^r(\R^3)$ by interpolation between
				$L^2$ and $L^6$. Rewriting \eqref{eq:effectiveA} as
				\[
				(-\Delta+1)u_i
				=
				(\mu_i+1)u_i+\frac4\alpha W_\alpha u_i,
				\]
				the global Calder\'on--Zygmund estimate \cite{pde} yields
				$u_i\in W^{2,r}(\R^3)$. Since $r>3/2$, Sobolev embedding implies
				$u_i\in L^\infty(\R^3)$.
				
				Consequently,
				\[
				\rho_\gamma\in L^1(\R^3)\cap L^\infty(\R^3).
				\]
				Splitting the kernel into its local and far-field parts,
				\[
				|x|^{-\alpha}
				=
				|x|^{-\alpha}\mathbf 1_{\{|x|<1\}}
				+
				|x|^{-\alpha}\mathbf 1_{\{|x|\ge1\}},
				\]
				we obtain
				\[
				\|W_\alpha\|_\infty
				\le
				\|\rho_\gamma\|_\infty
				\int_{|x|<1}|x|^{-\alpha}\,dx
				+\|\rho_\gamma\|_1<\infty,
				\]
				because $\alpha<3$.
				
				It now follows from \eqref{eq:effectiveA} and standard local elliptic
				estimates that
				$u_i\in W^{2,p}_{\mathrm{loc}}(\R^3)$ for every $1<p<\infty$.
				In particular, choosing $p>3$ gives a uniform local
				$C^{1,\beta}$ bound for some $\beta\in(0,1)$, and hence $u_i$ is
				uniformly continuous. Since $u_i\in L^2(\R^3)$, uniform continuity
				implies
				\[
				u_i(x)\rightarrow0
				\qquad\text{as }|x|\to\infty.
				\]
			\end{proof}
			
			\begin{lemma}\label{lemma x0}
				Under the assumptions of Proposition~\ref{propositionRegularity},
				\begin{equation}
					\lim_{|x|\to\infty}W_\alpha(x)=0.
					\label{eq:Wto0}
				\end{equation}
			\end{lemma}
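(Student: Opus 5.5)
The plan is to split the convolution kernel into a near-field and a far-field part, and to use that $\rho_\gamma$ lies in $L^1\cap L^\infty$ with small tail mass far from the origin. Proposition~\ref{propositionRegularity} gives $\rho_\gamma\in L^1(\R^3)\cap L^\infty(\R^3)$, and this is all we shall need. Fix $\varepsilon>0$ and choose $r\in(0,1)$ so small that
\[
\|\rho_\gamma\|_\infty\int_{|y|<r}|y|^{-\alpha}\,dy<\varepsilon ,
\]
which is possible because $\alpha<3$. For any $x$ we then write
\[
W_\alpha(x)=\int_{|x-y|<r}\frac{\rho_\gamma(y)}{|x-y|^{\alpha}}\,dy+\int_{r\le|x-y|\le |x|/2}\frac{\rho_\gamma(y)}{|x-y|^{\alpha}}\,dy+\int_{|x-y|>|x|/2}\frac{\rho_\gamma(y)}{|x-y|^{\alpha}}\,dy=:I_1+I_2+I_3 .
\]

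We estimate the three terms in turn.
\begin{itemize}
\item By the choice of $r$, $I_1<\varepsilon$ uniformly in $x$.
\item For $I_3$, the kernel is at most $2^\alpha|x|^{-\alpha}$, so $I_3\le 2^\alpha|x|^{-\alpha}\|\rho_\gamma\|_1\to0$ as $|x|\to\infty$.
\item For $I_2$, the region $|x-y|\le|x|/2$ forces $|y|\ge|x|/2$, and the kernel is at most $r^{-\alpha}$. Hence
\[
I_2\le r^{-\alpha}\int_{|y|\ge|x|/2}\rho_\gamma(y)\,dy ,
\]
which tends to $0$ as $|x|\to\infty$ by integrability of $\rho_\gamma$ and dominated convergence, with $r$ fixed.
\end{itemize}
Combining the three bounds gives $\limsup_{|x|\to\infty}W_\alpha(x)\le\varepsilon$. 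Since $\varepsilon$ is arbitrary and $W_\alpha\ge0$, the conclusion \eqref{eq:Wto0} follows.

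None of the steps is a serious obstacle. The only point needing care is the order of choices: $r$ must be fixed first, depending only on $\|\rho_\gamma\|_\infty$ and $\varepsilon$, before letting $|x|\to\infty$. This way the factor $r^{-\alpha}$ in $I_2$ is a harmless constant multiplying a vanishing tail mass. If one wanted to avoid $L^\infty$, one could instead apply Hölder's inequality with $\rho_\gamma\in L^p$, $p>3/(3-\alpha)$, on the near region. However, Proposition~\ref{propositionRegularity} already supplies boundedness, so this is unnecessary.
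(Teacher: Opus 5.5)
Your proof is correct. It differs from the paper's in a small but genuine way.

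\textbf{The paper's route.} The paper fixes the near-field radius at $1$ and splits the far field according to $|y|\le R$ versus $|y|>R$. It handles the near-field term by the bound
\[
\int_{|x-y|<1}\rho_\gamma(y)|x-y|^{-\alpha}\,dy\le \sup_{|y-x|<1}\rho_\gamma(y)\int_{|z|<1}|z|^{-\alpha}\,dz ,
\]
and then invokes the pointwise decay $\rho_\gamma(y)\to0$ as $|y|\to\infty$. That decay comes from the uniform continuity and vanishing at infinity of the $u_i$ proved in Proposition~\ref{propositionRegularity}.

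\textbf{Your route.} You make the near-field term uniformly small by shrinking the radius $r$, which uses only $\|\rho_\gamma\|_\infty$. You split the far field by $|x-y|\le|x|/2$ versus $|x-y|>|x|/2$. The remaining pieces are then controlled by $r^{-\alpha}$ times the tail mass $\int_{|y|\ge|x|/2}\rho_\gamma$, and by $2^\alpha|x|^{-\alpha}\|\rho_\gamma\|_1$.

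\textbf{Comparison.} Your argument needs only $\rho_\gamma\in L^1\cap L^\infty$, or, as you note, $L^1\cap L^p$ with $p>3/(3-\alpha)$. It uses no continuity or pointwise decay, so it is the more robust statement. The paper's version is marginally shorter, since the decay of the $u_i$ is already available, but it relies on that stronger input.
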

			
			\begin{proof}
				By Proposition~\ref{propositionRegularity},
				$\rho_\gamma$ is bounded, integrable, continuous, and
				$\rho_\gamma(x)\to0$ as $|x|\to\infty$.
				Write
				\[
				W_\alpha(x)=I_1(x)+I_2(x)+I_3(x),
				\]
				where, for a fixed $R>1$,
				\begin{align*}
					I_1(x)
					&:=
					\int_{|x-y|<1}\frac{\rho_\gamma(y)}{|x-y|^\alpha}\,dy,\\
					I_2(x)
					&:=
					\int_{\substack{|x-y|\ge1\\ |y|\le R}}
					\frac{\rho_\gamma(y)}{|x-y|^\alpha}\,dy,\\
					I_3(x)
					&:=
					\int_{\substack{|x-y|\ge1\\ |y|>R}}
					\frac{\rho_\gamma(y)}{|x-y|^\alpha}\,dy.
				\end{align*}
				For the first term,
				\[
				I_1(x)
				\le
				\sup_{|y-x|<1}\rho_\gamma(y)
				\int_{|z|<1}|z|^{-\alpha}\,dz
				\rightarrow0
				\]
				as $|x|\to\infty$. For $|x|>2R$,
				\[
				I_2(x)
				\le
				(|x|-R)^{-\alpha}\|\rho_\gamma\|_1
				\rightarrow0.
				\]
				Finally,
				\[
				I_3(x)
				\le
				\int_{|y|>R}\rho_\gamma(y)\,dy.
				\]
				Since $\rho_\gamma\in L^1(\R^3)$, the last quantity can be made
				arbitrarily small by choosing $R$ sufficiently large. This proves
				\eqref{eq:Wto0}.
			\end{proof}
			
			\begin{proposition}\label{decay}
				Assume the hypotheses of Proposition~\ref{propositionRegularity} and
				let $\mu_i<0$. Set
				\[
				\kappa_i:=\sqrt{-\mu_i}.
				\]
				Then, for every $0<\sigma<\kappa_i$, there exist
				$C_{\sigma,i}>0$ and $R_{\sigma,i}>0$ such that
				\begin{equation}
					|u_i(x)|+|\nabla u_i(x)|
					\le
					C_{\sigma,i}e^{-\sigma|x|}
					\qquad
					\text{for }|x|\ge R_{\sigma,i}.
					\label{eq:rough-decay}
				\end{equation}
				
				If $\alpha=2$ and all occupied eigenvalues
				$\mu_1,\cdots,\mu_M$ are negative, then the estimate can be sharpened to
				\begin{equation}
					|u_i(x)|+|\nabla u_i(x)|
					\le
					C_i|x|^{-1}e^{-\kappa_i|x|}
					\qquad
					\text{for }|x|\ge R_i,
					\qquad 1\le i\le M.
					\label{eq:sharp-upper}
				\end{equation}
				Furthermore, if $u_1$ is chosen as the positive first eigenfunction of
				the limiting effective operator, then
				\begin{equation}
					u_1(x)
					\ge
					c_1|x|^{-1}e^{-\kappa_1|x|}
					\qquad
					\text{for }|x|\ge R_1.
					\label{eq:sharp-lower}
				\end{equation}
			\end{proposition}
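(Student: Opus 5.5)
The plan is to treat each occupied orbital as a solution of a linear Schr\"odinger equation $-\Delta u_i + (\kappa_i^2 - \tfrac4\alpha W_\alpha)u_i = 0$ whose potential decays at infinity (Lemma~\ref{lemma x0}), and then compare with explicit radial barriers via the maximum principle.

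\emph{Rough decay.} Fix $0<\sigma<\kappa_i$ and choose $\sigma<\sigma'<\kappa_i$. By Lemma~\ref{lemma x0} there is $R$ with $\tfrac4\alpha W_\alpha(x)\le \kappa_i^2-\sigma'^2$ for $|x|\ge R$. Set $v=|u_i|$, or better $v=u_i^2$. Kato's inequality gives
\[
\Delta|u_i|\ \ge\ \bigl(\kappa_i^2-\tfrac4\alpha W_\alpha\bigr)|u_i|\ \ge\ \sigma'^2|u_i|
\]
in $\{|x|>R\}$ in the distributional sense. The comparison function $\phi(x)=Ce^{-\sigma'|x|}$ satisfies $\Delta\phi-\sigma'^2\phi=-\tfrac{2\sigma'}{|x|}\phi\le0$. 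Choose $C$ so that $\phi\ge |u_i|$ on $|x|=R$; this is possible because $u_i\in L^\infty$ by Proposition~\ref{propositionRegularity}. Since both functions tend to $0$ at infinity, the weak maximum principle applied to $|u_i|-\phi$ on $\{|x|>R\}$ yields $|u_i|\le Ce^{-\sigma'|x|}$. For the gradient, apply local $W^{2,p}$ (hence $C^{1,\beta}$) estimates on unit balls $B_1(x)$ with $|x|$ large. The right-hand side $(\mu_i+\tfrac4\alpha W_\alpha)u_i$ is bounded by $Ce^{-\sigma'|x|}$ on $B_1(x)$, since $W_\alpha$ is bounded, so $|\nabla u_i(x)|\le C'e^{-\sigma'|x|}\le C'e^{-\sigma|x|}$.

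\emph{Sharp bounds for $\alpha=2$.} Here the key step is a better expansion of $W_2$. Once every occupied $\mu_j<0$, the rough decay shows that $\rho_\gamma$ decays exponentially, so $W_2(x)=\|\rho_\gamma\|_1|x|^{-2}+O(|x|^{-3})$. In particular $|W_2(x)|\le A|x|^{-2}$ for large $|x|$, and the perturbation is integrable along rays in the sense relevant to Agmon-type estimates.

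For the upper bound I would use the barrier $\phi_+(x)=C|x|^{-1}e^{-\kappa_i|x|}\bigl(1+\tfrac{B}{|x|}\bigr)$, or equivalently $|x|^{-1}e^{-\kappa_i|x|+B/|x|}$. Since $\Delta(r^{-1}e^{-\kappa r})=\kappa^2 r^{-1}e^{-\kappa r}$ exactly, the correction factor produces a term of order $r^{-2}$ with a favorable sign when $B$ is large compared with $A$. This makes $\phi_+$ a supersolution of $-\Delta+\kappa_i^2-2W_2$ outside a large ball, and the comparison then proceeds as above.

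For the lower bound on $u_1>0$ I would use $\phi_-=c|x|^{-1}e^{-\kappa_1|x|-B/|x|}$ as a subsolution. The comparison needs $u_1\ge\phi_-$ on the sphere, which follows from positivity and continuity, together with $u_1-\phi_-\to0$ at infinity. The gradient bound again follows from local elliptic estimates applied to the rescaled equation on unit balls.

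The main obstacle is the sharp $|x|^{-1}$ prefactor. It requires that the attractive tail $2W_2\sim c|x|^{-2}$ be only a lower-order perturbation, and the barrier correction must absorb precisely this $r^{-2}$ term without changing the prefactor. I would first check carefully the sign computation for $\Delta\bigl(r^{-1}e^{-\kappa r\pm B/r}\bigr)$. If that fails, I would fall back on writing $u_i=(-\Delta+\kappa_i^2)^{-1}(2W_2u_i)$ with the Yukawa kernel $e^{-\kappa_i|x-y|}/(4\pi|x-y|)$ and bootstrapping from the rough decay.
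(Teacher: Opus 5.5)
The rough-decay step and the bound $W_2\le A|x|^{-2}$ match the paper's proof. The genuine problem is that the barrier you propose for the sharp upper bound has the wrong sign. Write $\phi=r^{-1}g(r)$, so that $\Delta\phi=r^{-1}g''$ for $r>0$. For $g=e^{-\kappa r+B/r}$ this gives
\[
\frac{(-\Delta+\kappa^2)\phi}{\phi}=-\frac{2\kappa B}{r^2}-\frac{2B}{r^3}-\frac{B^2}{r^4}<0 ,
\]
and similarly $(-\Delta+\kappa^2)\bigl[r^{-1}e^{-\kappa r}(1+B/r)\bigr]=-B\,r^{-1}e^{-\kappa r}\bigl(\tfrac{2\kappa}{r^2}+\tfrac{2}{r^3}\bigr)<0$. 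So for $B>0$ your $\phi_+$ is a strict \emph{subsolution} of $-\Delta+\kappa_i^2-C_0r^{-2}$ with $C_0=2A$. The comparison then gives no upper bound on $|u_i|$, however large $B$ is.

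The attractive tail $2W_2>0$ slows the decay, so the correction must slow it too. The paper uses $\Phi_a(r)=r^{-1}e^{-\kappa_i r-a/r}$, for which
\[
\frac{(-\Delta+\kappa_i^2)\Phi_a}{\Phi_a}=\frac{2\kappa_i a}{r^2}+\frac{2a}{r^3}-\frac{a^2}{r^4}.
\]
Choosing $2\kappa_i a>C_0$ makes $\Phi_a$ a supersolution outside a large ball. Since $e^{-a/R}\le e^{-a/r}\le 1$ there, the prefactor $|x|^{-1}e^{-\kappa_i|x|}$ is unchanged. In your first form, this corresponds to using $1-B/r$ instead of $1+B/r$. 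With this correction your argument is the paper's.

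The same sign slip appears in your lower bound. The function $r^{-1}e^{-\kappa_1 r-B/r}$ is a subsolution of $-\Delta+\kappa_1^2-2W_2$ only if $\kappa_1B<\|\rho_\gamma\|_1$, the coefficient of $|x|^{-2}$ in $W_2$. That requires a lower bound $W_2\ge c|x|^{-2}$, which you do not need if you simply take $B=0$. Indeed, $r^{-1}e^{-\kappa_1 r}$ solves $(-\Delta+\kappa_1^2)\phi=0$ for $r>0$, while $(-\Delta+\kappa_1^2)u_1=2W_2u_1\ge 0$. So comparison on $\{|x|>R\}$ with $c\le R\,e^{\kappa_1 R}\min_{|x|=R}u_1$ gives the lower bound.

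This comparison route is a valid alternative to the paper's. The paper instead writes $u_1=2\,G_{\kappa_1}*(W_2u_1)$ with the Yukawa kernel, which is your fallback, and restricts the integral to $y\in B_R$. It then reads off $G_{\kappa_1}(x-y)\ge c|x|^{-1}e^{-\kappa_1|x|}$ directly, with no maximum principle or boundary matching. Both routes use only $W_2u_1\ge 0$ and the positivity of $u_1$.
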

			
			\begin{proof}
				We first prove \eqref{eq:rough-decay}. By Kato's inequality \cite{Reed} applied to
				\eqref{eq:effectiveA},
				\[
				(-\Delta+\kappa_i^2)|u_i|
				\le
				\frac4\alpha W_\alpha |u_i|
				\]
				in the sense of distributions. By Lemma~\ref{lemma x0}, for every
				$0<\sigma<\kappa_i$ there exists $R>0$ such that
				\[
				\frac4\alpha W_\alpha(x)
				\le
				\kappa_i^2-\sigma^2
				\qquad\text{for }|x|\ge R.
				\]
				Hence
				\begin{equation*}
					-\Delta|u_i|+\sigma^2|u_i|\le0
					\qquad\text{in }\R^3\setminus B_R.
				\end{equation*}
				The function
				\[
				v(x)=A e^{-\sigma(|x|-R)}
				\]
				is a positive supersolution of
				$-\Delta+\sigma^2$ in $\R^3\setminus B_R$ for $R$ sufficiently
				large. Choosing $A$ so that $v\ge |u_i|$ on $\partial B_R$ and using
				the exterior comparison principle, together with
				$u_i(x)\to0$ as $|x|\to\infty$, gives
				\[
				|u_i(x)|\le C_{\sigma,i}e^{-\sigma|x|}.
				\]
				The gradient estimate in \eqref{eq:rough-decay} follows from the
				standard interior $W^{2,p}$ estimate applied to
				\[
				-\Delta u_i
				=
				-\kappa_i^2u_i+\frac4\alpha W_\alpha u_i
				\]
				on unit balls centered at $x$, using that $W_\alpha$ is bounded.
				
				We now assume $\alpha=2$ and
				$\mu_1,\cdots,\mu_M<0$. The first part of the proposition yields,
				for some $\sigma>0$,
				\[
				\rho_\gamma(x)\le Ce^{-2\sigma|x|}
				\qquad\text{for }|x|\gg1.
				\]
				We claim that
				\begin{equation}
					W_2(x)\le \frac{C}{|x|^2}
					\qquad\text{for }|x|\gg1.
					\label{eq:W2-rate}
				\end{equation}
				Indeed, if $r=|x|$ is large, then
				\[
				\int_{|y|\le r/2}\frac{\rho_\gamma(y)}{|x-y|^2}\,dy
				\le
				\frac{4}{r^2}\|\rho_\gamma\|_1.
				\]
				For the complementary region $|y|>r/2$, split further into
				$|x-y|<1$ and $|x-y|\ge1$. The exponential bound on $\rho_\gamma$
				gives
				\[
				\int_{\substack{|y|>r/2\\|x-y|<1}}
				\frac{\rho_\gamma(y)}{|x-y|^2}\,dy
				\le Ce^{-\sigma r},
				\]
				while
				\[
				\int_{\substack{|y|>r/2\\|x-y|\ge1}}
				\frac{\rho_\gamma(y)}{|x-y|^2}\,dy
				\le
				\int_{|y|>r/2}\rho_\gamma(y)\,dy
				\le Ce^{-\sigma r}.
				\]
				This proves \eqref{eq:W2-rate}.
				
				For $\alpha=2$, equation \eqref{eq:effectiveA} becomes
				\[
				(-\Delta+\kappa_i^2)u_i=2W_2u_i.
				\]
				Kato's inequality and \eqref{eq:W2-rate} imply that, for some $C_0>0$,
				\[
				(-\Delta+\kappa_i^2)|u_i|
				\le
				\frac{C_0}{|x|^2}|u_i|
				\qquad\text{for }|x|\gg1.
				\]
				Let $r=|x|$ and define
				\[
				\Phi_a(r):=\frac1r\exp\!\left(-\kappa_i r-\frac{a}{r}\right),
				\qquad a>0.
				\]
				A direct computation gives
				\[
				\frac{(-\Delta+\kappa_i^2)\Phi_a}{\Phi_a}
				=
				\frac{2\kappa_i a}{r^2}
				+\frac{2a}{r^3}
				-\frac{a^2}{r^4}.
				\]
				Choose $a>0$ so large that $2\kappa_i a>C_0$. Then, for all
				sufficiently large $r$,
				\[
				\left(
				-\Delta+\kappa_i^2-\frac{C_0}{r^2}
				\right)\Phi_a\ge0.
				\]
				The exterior comparison principle therefore yields
				\[
				|u_i(x)|
				\le
				C\Phi_a(|x|)
				\le
				C_i|x|^{-1}e^{-\kappa_i|x|}.
				\]
				Applying the interior elliptic gradient estimate on $B_1(x)$ and using
				the boundedness of $W_2$ gives the corresponding estimate for
				$|\nabla u_i(x)|$. This proves \eqref{eq:sharp-upper}.
				
				Finally, let $u_1>0$ be the first eigenfunction and
				$\kappa_1=\sqrt{-\mu_1}$. Since
				\[
				(-\Delta+\kappa_1^2)u_1=2W_2u_1
				\]
				and $2W_2u_1\ge0$ is nonzero, the resolvent representation gives
				\[
				u_1(x)
				=
				2\int_{\R^3}
				G_{\kappa_1}(x-y)W_2(y)u_1(y)\,dy,
				\qquad
				G_{\kappa}(z)=\frac{e^{-\kappa|z|}}{4\pi|z|}.
				\]
				Choose $R>0$ such that
				\[
				\int_{B_R}W_2(y)u_1(y)\,dy>0.
				\]
				For $|x|>2R$ and $y\in B_R$,
				\[
				|x-y|\le |x|+R,
				\]
				and therefore
				\[
				G_{\kappa_1}(x-y)
				\ge
				\frac{e^{-\kappa_1(|x|+R)}}{4\pi(|x|+R)}
				\ge
				c\,|x|^{-1}e^{-\kappa_1|x|}.
				\]
				Restricting the resolvent integral to $B_R$ proves
				\eqref{eq:sharp-lower}.
			\end{proof}

			\section{A dual Hartree-type Lieb--Thirring inequality}\label{sec:B}
			\setcounter{theorem}{0}
			\renewcommand{\thetheorem}{\Alph{section}.\arabic{theorem}}
			For completeness, we record in this appendix a dual inequality associated with the finite-rank Hartree GNS inequality established in Section~\ref{sec:2}.
			More precisely, letting $q'=q/(q-1)$, with the convention $q'=1$ when $q=\infty$, we obtain the following Hartree-type Lieb--Thirring inequality.

			\begin{theorem}[Hartree-type Lieb--Thirring inequality]\label{thm:LT}
				Let $1\le N<\infty$, $0<\alpha<3$, and $1<q\le\infty$. Assume
				\[
				\int_{\R^3}(|V|*|x|^{-\alpha})|V|\,dx<\infty.
				\]
				Then
				\begin{equation}
					\sum_{n=1}^{N}\left|\lambda_n\bigl(-\Delta+V*|x|^{-\alpha}\bigr)\right|^{q'}
					\le L_{\alpha,q'}^{(N)}
					\left(\int_{\R^3}(V_-*|x|^{-\alpha})V_-\,dx\right)^{\frac{2q'}{4-\alpha}},
					\label{eq:LT}
				\end{equation}
				where $L_{\alpha,q'}^{(N)}$ is the optimal constant and missing negative eigenvalues are counted as zero. Moreover,
				\begin{equation}
					\mathcal{K}_{\alpha,q}^{(N)}\bigl(L_{\alpha,q'}^{(N)}\bigr)^{\frac{(4-\alpha)(q-1)}{\alpha q}}
					=\frac{\alpha}{4}\left(\frac{4-\alpha}{4}\right)^{\frac{4-\alpha}{\alpha}}.
					\label{eq:duality}
				\end{equation}
			\end{theorem}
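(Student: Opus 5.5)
The plan is the standard Legendre-type duality argument of Frank--Gontier--Lewin, adapted to the Hartree term. Write $W:=V*|x|^{-\alpha}$; by positive-definiteness of the Riesz kernel, $\int (V_-*|x|^{-\alpha})V_-\,dx$ is a squared norm $D(V_-,V_-)$. The first reduction is to replace $V$ by $-V_-$. Since $V_+*|x|^{-\alpha}\ge0$, we have $-\Delta+W\ge -\Delta-V_-*|x|^{-\alpha}$, and min--max then shows the negative eigenvalues only decrease. So it suffices to treat $V=-f$ with $f\ge0$.

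\textbf{Variational characterization.} For $1<q<\infty$, min--max gives
\[
\sum_{n\le N}|\lambda_n(-\Delta-f*|x|^{-\alpha})|^{q'}=\sup_{\gamma}\Big\{-q'\|\gamma\|_{\mathcal S^q}^{\,q}\cdot c+\dots\Big\}.
\]
A cleaner route is as follows. For $0\le\gamma$ with $\rank\gamma\le N$,
\[
\sum_{n\le N}|\lambda_n|^{q'}\ \ge\ \Big(\sum_{n\le N}|\lambda_n|\,k_n\Big)^{q'}\Big/\|\gamma\|_{\mathcal S^q}^{q'}
\]
by H\"older. In the other direction,
\[
\sum_n |\lambda_n|^{q'}=\sup_{\|\gamma\|_{\mathcal S^q}\le1,\ \rank\gamma\le N}\Big(-\Tr\big((-\Delta-f*|x|^{-\alpha})\gamma\big)\Big)_+^{q'}.
\]
This is proved by diagonalizing $\gamma$ along eigenvectors (Ky Fan/min--max) and then using $\ell^q$--$\ell^{q'}$ duality. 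For $q=\infty$, $q'=1$, the sum of the $N$ lowest eigenvalues is the infimum over projections.

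\textbf{Optimization for fixed $\gamma$.} By Cauchy--Schwarz for the positive-definite form $D$,
\[
\int (f*|x|^{-\alpha})\rho_\gamma \le D(f,f)^{1/2}D(\rho_\gamma,\rho_\gamma)^{1/2}.
\]
Then the GNS inequality \eqref{eq:GNS} gives
\[
D(\rho_\gamma,\rho_\gamma)^{1/2}\le (\mathcal K_{\alpha,q}^{(N)})^{-\alpha/4}\|\gamma\|_{\mathcal S^q}^{(4-\alpha)/4}\,T^{\alpha/4},\qquad T=\Tr(-\Delta\gamma).
\]
Hence
\[
-\Tr((-\Delta-f*|x|^{-\alpha})\gamma)\le \sup_{T\ge0}\big(aT^{\alpha/4}-T\big),
\]
where $a$ is explicit. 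Maximizing in $T$ (the exponent $\alpha/4<1$) yields
\[
\frac{4-\alpha}{4}\Big(\frac{\alpha}{4}\Big)^{\alpha/(4-\alpha)}a^{4/(4-\alpha)}.
\]
Raising to the power $q'$ gives \eqref{eq:LT} with $D(f,f)^{2q'/(4-\alpha)}$ on the right, and the constant is expressed through $\mathcal K^{(N)}_{\alpha,q}$. This proves the upper bound for $L^{(N)}_{\alpha,q'}$ in \eqref{eq:duality}. Scaling consistency should be checked here: the $\mathcal S^q$ exponents must balance after normalizing $\|\gamma\|_{\mathcal S^q}=1$, and the constant can be absorbed into the definition by homogeneity.

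\textbf{Reverse inequality.} I would test with the optimizer from Theorem~\ref{thm:GNS}. Take $\gamma$ attaining $\mathcal K^{(N)}_{\alpha,q}$ (available for $q\le q^*_\alpha$), set $f=\frac4\alpha\rho_\gamma$, and use part (iii) of Theorem~\ref{thm:GNS}. The range of $\gamma$ consists of eigenfunctions of $H_{\alpha,\gamma}$ with eigenvalues $\mu_{n_i}$, and $k_i\propto|\mu_{n_i}|^{1/(q-1)}$. Thus H\"older is an equality, Cauchy--Schwarz is an equality since $f\propto\rho_\gamma$, and GNS is an equality. With a suitable dilation, the optimal $T$ is achieved as well, so every inequality becomes an identity and \eqref{eq:duality} follows. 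For $q>q^*_\alpha$ (possible only when $\alpha<2$), or wherever an optimizer is not available, I would instead use a minimizing sequence and pass to limits.

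\textbf{Main obstacle.} The delicate point is this last step. One must check that the eigenvalues $\mu_{n_i}$ are the $N$ lowest eigenvalues, or that there are at most $M$ negative ones when $M<N$. This is exactly what the ``first $M$ eigenvalues'' statement in Theorem~\ref{thm:GNS}(iii) provides, but only under its restrictions on $\alpha$. For $2<\alpha<3$ with full rank, I expect to need a separate argument: the inequality $\sum_{n\le N}|\lambda_n|^{q'}\ge\sum_i|\mu_{n_i}|^{q'}$ suffices for the lower bound on $L$, and that direction holds regardless of ordering.
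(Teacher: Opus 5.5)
Your upper bound is the paper's argument in different packaging. The paper takes $\gamma=\sum_j n_j|u_j\rangle\langle u_j|$ built from the negative eigenfunctions with $n_j=|\lambda_j|^{1/(q-1)}$, which is exactly the maximizer in your variational formula. It then applies the GNS inequality and Cauchy--Schwarz for the Riesz form, optimizes in $H_\alpha=\int_{\R^3}(\rho_\gamma*|x|^{-\alpha})\rho_\gamma\,dx$ rather than in $T$, and closes with $\|\gamma\|_{\mathcal S^q}^q=\sum_j|\lambda_j|^{q'}$.

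The reverse inequality is where you genuinely differ. The paper never uses GNS optimizers. For an arbitrary $\gamma\ge0$ of rank at most $N$ it inserts $V=-\beta\rho_\gamma$ into the Lieb--Thirring inequality and uses $\Tr\big((-\Delta-\beta\rho_\gamma*|x|^{-\alpha})\gamma\big)\ge-\|\gamma\|_{\mathcal S^q}\big(\sum_{j\le N}|\lambda_j|^{q'}\big)^{1/q'}$. Optimizing in $\beta$ reproduces the GNS inequality with a constant expressed through $L^{(N)}_{\alpha,q'}$, and hence the matching bound for every $1<q\le\infty$. Your route tests with $V=-\frac4\alpha\rho_\gamma$ for an optimizer $\gamma$ of Theorem~\ref{thm:GNS}. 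It is valid wherever that theorem applies, and it gives something extra: $-\frac4\alpha\rho_\gamma$ is then an optimizer of the Lieb--Thirring inequality. But it only covers $q\le q^*_\alpha$.

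For $0<\alpha<2$ and $q^*_\alpha<q\le\infty$ (including $q=\infty$), Theorem~\ref{thm:GNS} gives neither an optimizer nor compactness of minimizing sequences. A minimizing sequence also carries no Euler--Lagrange structure, so ``use a minimizing sequence and pass to limits'' is not yet an argument on that range.

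The fix is already in your proposal. Your H\"older bound $\sum_{n\le N}|\lambda_n|^{q'}\ge\big(-\Tr(H\gamma)\big)_+^{q'}/\|\gamma\|_{\mathcal S^q}^{q'}$, with $H=-\Delta-f*|x|^{-\alpha}$, holds for \emph{every} trial $\gamma$ of rank at most $N$. It needs no spectral information about $\gamma$.

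Normalize $\Tr(-\Delta\gamma)=\int_{\R^3}(\rho_\gamma*|x|^{-\alpha})\rho_\gamma\,dx=1$ by dilation and multiplication. The GNS quotient $Q(\gamma)$ is invariant under both, and then $\|\gamma\|_{\mathcal S^q}=Q(\gamma)^{\alpha/(4-\alpha)}$. Take $f=\frac4\alpha\rho_\gamma$. Then $-\Tr(H\gamma)=\frac{4-\alpha}{\alpha}$ and $D(f,f)=(4/\alpha)^2$, so $L^{(N)}_{\alpha,q'}\ge\big[\frac{4-\alpha}{4}(\frac\alpha4)^{\frac{\alpha}{4-\alpha}}\big]^{q'}Q(\gamma)^{-\frac{\alpha q'}{4-\alpha}}$. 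Letting $Q(\gamma)\to\mathcal K^{(N)}_{\alpha,q}$ gives the lower bound for all $q$ at once.

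This is the paper's argument with $\beta$ fixed by the normalization. It also removes your ``main obstacle'': neither part (iii) of Theorem~\ref{thm:GNS} nor any ordering of the $\mu_{n_i}$ is needed, since $\gamma$ enters only as a trial state.
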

			We next prove Theorem~\ref{thm:LT} and show the dual relation between non-local Gagliardo-Nirenberg-Sobolev inequality and Lieb-Thirring type inequality, which inspired   by  the arguments in Appendix A of \cite{FGL}.

			\begin{proof}[Proof of Theorem~\ref{thm:LT}]
				Define an operator 
				$$\gamma=\sum_{j=1}^N n_j|u_j\rangle \langle u_j|,\ \text{ with }\ n_j=|\lambda_j(-\Delta+V*|x|^{-\alpha})|^{\frac{1}{q-1}}, \ j=1,2,\cdots,N, $$ 
				where we assume that $-\Delta+V*|x|^{-\alpha}$ has at least $N$ negative eigenvalues (counting multiplicity), and $u_1,u_2,...,u_N$ be orthogonal eigenfunctions corresponding to the $N$ negative eigenvalues.
				Set $H_{\alpha}:=\int_{\mathbb{R}^3}(\rho_\gamma*|x|^{-\alpha})\rho_\gamma dx$, it then follows that
				\begin{equation*}
					\begin{aligned}
						\sum_{j=1}^N &|\lambda_j(-\Delta+V(x)*|x|^{-\alpha})|^{q'}=-\sum_{j=1}^N n_j\int_{\mathbb{R}^3}\Big( |\nabla u_j|^2+(V(x)*|x|^{-\alpha})|u_j|^2\Big)dx\\
						\le&-\mathcal{K}_{\alpha,q}^{(N)}\|\gamma\|_{\mathcal{S}^{q}}^{-\frac{4-\alpha}{\alpha}}H_{\alpha}^{\frac{2}{\alpha}}
						+\int_{\mathbb{R}^3}(\rho_\gamma*|x|^{-\alpha})V_-(x)dx\\
						\le&-\mathcal{K}_{\alpha,q}^{(N)}\|\gamma\|_{\mathcal{S}^{q}}^{-\frac{4-\alpha}{\alpha}}H_{\alpha}^{\frac{2}{\alpha}}+H_\alpha^{\frac{1}{2}}\Big(\int_{\mathbb{R}^3}(V_-*|x|^{-\alpha})V_-(x)dx\Big)^{\frac{1}{2}}.
					\end{aligned}
				\end{equation*}
				Treat $H_\alpha$ as a variable and find its minimum value, we have
				\begin{equation*}
					\sum_{j=1}^N |\lambda_j(-\Delta+V(x)*|x|^{-\alpha})|^{q'}\le \frac{4-\alpha}{4}\Big(\frac{\alpha}{4}\Big)^{\frac{\alpha}{4-\alpha}}(\mathcal{K}_{\alpha,q}^{(N)})^{-\frac{\alpha}{4-\alpha}}\|\gamma\|_{\mathcal{S}^{q}}\Big(\int_{\mathbb{R}^3}(V_-*|x|^{-\alpha})V_-(x)dx\Big)^{\frac{2}{4-\alpha}}.
				\end{equation*}
				It can deduce from $\|\gamma\|_{\mathcal{S}^{q}}^{q}=\sum_{j=1}^N |\lambda_j(-\Delta+V(x)*|x|^{-\alpha})|^{q'}$ that
				\begin{equation*}
					\sum_{j=1}^N |\lambda_j(-\Delta+V(x)*|x|^{-\alpha})|^{q'}\le \Big(\frac{4-\alpha}{4}\Big)^{q'}\Big(\frac{\alpha}{4}\Big)^{\frac{\alpha q'}{4-\alpha}} (\mathcal{K}_{\alpha,q}^{(N)})^{-\frac{\alpha q'}{4-\alpha}}\Big(\int_{\mathbb{R}^3}(V_-*|x|^{-\alpha})V_-(x)dx\Big)^\frac{2q'}{4-\alpha}.
				\end{equation*}
				Which indicates that the Hartree-type Lieb--Thirring inequality \eqref{eq:LT} is well defined, and   
				\begin{equation}\label{dual}
					\Big(\frac{4-\alpha}{4}\Big)^{\frac{q}{q-1}}\Big(\frac{\alpha}{4}\Big)^{\frac{\alpha q}{(4-\alpha)(q-1)}} (\mathcal{K}_{\alpha,q}^{(N)})^{-\frac{\alpha q}{(4-\alpha)(q-1)}}\ge \mathcal{L}_{\alpha,q'}^{(N)}.
				\end{equation}

				On the other hand, for any given  operator $0\leq \gamma=\sum_{i=1}^Nn_i|u_i\rangle \langle u_i|$,  
				we choose $V(x)=-\beta \rho_\gamma$ with $\beta>0$ to be determined. Then
				\begin{equation*}
					\begin{aligned}
						&\sum_{j=1}^N n_j\int_{\mathbb{R}^3}|\nabla u_j|^2dx-\beta\int_{\mathbb{R}^3}(\rho_\gamma*|x|^{-\alpha})\rho_\gamma dx
						=\sum_{j=1}^N n_j \int_{\mathbb{R}^3}\Big(|\nabla u_j|^2dx-\beta(\rho_\gamma*|x|^{-\alpha})|u_j|^2\Big) dx\\
						&\ge -\|\gamma\|_{\mathcal{S}^{q}}\Big(\sum_{j=1}^N |\lambda_j(-\Delta-\beta\rho_\gamma*|x|^{-\alpha})|^{q'}\Big)^{\frac{1}{q'}}
						\ge -\|\gamma\|_{\mathcal{S}^{q}}(\mathcal{L}_{\alpha,q'}^{(N)})^{\frac{1}{q'}}\beta^{\frac{4}{4-\alpha}}\Big(\int_{\mathbb{R}^3}(\rho_\gamma*|x|^{-\alpha})\rho_\gamma dx\Big)^{\frac{2}{4-\alpha}},
					\end{aligned}
				\end{equation*}
				where we used the H\"older inequality in Schatten space \cite{Simon}. Hence
				\begin{equation*}
					\sum_{j=1}^N n_j\int_{\mathbb{R}^3}|\nabla u_j|^2dx\ge \beta H_{\alpha}-\|\gamma\|_{\mathcal{S}^{q}}(\mathcal{L}_{\alpha,q'}^{(N)})^{\frac{1}{q'}}\beta^{\frac{4}{4-\alpha}}H_{\alpha}^{\frac{2}{4-\alpha}}.
				\end{equation*}
				Optimizing over $\beta$, we obtain
				\begin{equation*}
					\sum_{j=1}^N n_j\int_{\mathbb{R}^3}|\nabla u_j|^2dx\ge \Big(\frac{4-\alpha}{4}\Big)^{\frac{4-\alpha}{\alpha}}\Big(\frac{\alpha}{4}\Big) (\mathcal{L}_{\alpha,q'}^{(N)})^{-\frac{4-\alpha}{\alpha q'}} \|\gamma\|_{\mathcal{S}^{q}}^{-\frac{4-\alpha}{\alpha}}H_{\alpha}^{\frac{2}{\alpha}}.
				\end{equation*}
				The GNS inequality \eqref{eq:GNS} gives
				\begin{equation*}
					\frac{\sum_{j=1}^N n_j\int_{\mathbb{R}^3}|\nabla u_j|^2dx \|\gamma\|_{\mathcal{S}^{q}}^{\frac{4-\alpha}{\alpha}} }{H_{\alpha}^{\frac{2}{\alpha}}}\ge \mathcal{K}_{\alpha,q}^{(N)}\ge \Big(\frac{4-\alpha}{4}\Big)^{\frac{4-\alpha}{\alpha}}\Big(\frac{\alpha}{4}\Big) (\mathcal{L}_{\alpha,q'}^{(N)})^{-\frac{(4-\alpha)(q-1)}{\alpha q}}.
				\end{equation*}
				Combining this with \eqref{dual} yields
				\begin{equation*}
					\mathcal{K}_{\alpha, q}^{(N)}(\mathcal{L}_{\alpha,q'}^{(N)})^{\frac{(4-\alpha)(q-1)}{\alpha q}}=\frac{\alpha}{4}\Big(\frac{4-\alpha}{4}\Big)^{\frac{4-\alpha}{\alpha}}.
				\end{equation*}
				This proves \eqref{eq:duality} and completes the proof.
			\end{proof}
			
			\noindent{\bf Acknowledgments.} This work was supported by the National Natural Science Foundation of China under grant numbers 12322106, 12601205, 12671142 and supported by   ``the Fundamental Research Funds for the Central Universities”.
			
			\vskip.1truein 

\noindent\textbf{Data availability} Data sharing is not applicable to this article as no datasets were generated or analyzed during the current study.
\vskip.1truein
\noindent\textbf{Conflict of Interest: }  The authors declare that there is no conflict of interest.


\begin{thebibliography}{GNN}
				\bibitem{BrackMurthy}Brack, M. and Murthy, M. V. N. Harmonically trapped fermion gases: exact and asymptotic results in arbitrary dimensions, J. Phys. A: Math. Gen. {\bf36} (2003), 1111-1133.
				\bibitem{ChenGuo} Chen, B., Guo, Y. Ground states of Fermionic nonlinear Schr\"odinger systems with Coulomb potential I: the $L^2$-subcritical case. Lett. Math. Phys. 114 (2024), no. 6, Paper No. 132.
				\bibitem{ChenGuoNamOuYang}Chen, B., Guo, Y., Nam, P., Ou Yang, D. Mass-Critical Neutron Stars in the Hartree-Fock and Hartree-Fock-Bogoliubov Theories. arXiv:2504.16462.
				\bibitem{Cycon} Cycon, H. L., Froese, R. G., Kirsch, W. and Simon, B. Schr\"odinger Operators with Application to Quantum Mechanics and Global Geometry, Texts and Monographs in Physics, Springer Study Edition, Springer-Verlag, Berlin, 1987.
				\bibitem{Dean}Dean, D. S., Doussal, P. L., Majumdar S. N. and Schehr, G. Noninteracting fermions in a trap and random matrix theory, J. Phys. A: Math. Theor. 52 (2019), 144006, 32 pp.
				\bibitem{FGL}Frank, R., Gontier, D., Lewin, M. The nonlinear Schr\"odinger equation for orthonormal functions II: Application to Lieb-Thirring inequalities. Comm. Math. Phys. 384 (2021), no.3, 1783-1828.
				\bibitem{pde}Gilbarg, D., and Trudinger, N.~S.  Elliptic partial differential equations of second order, second edition, Grundlehren der mathematischen Wissenschaften, 224, Springer, Berlin, 1983.
				\bibitem{Giorgini} Giorgini, S. Pitaevskii, L. P. and Stringari, S.  Theory of ultracold atomic Fermi gases, Rev. Mod. Phys. 80 (2008), 1215-1274.
				\bibitem{Lewin} Gontier, D. Lewin, M. and Nazar, F. Q. The nonlinear Schr\"odinger equation for orthonormal functions: existence of ground states, Arch. Ration. Mech. Anal. 240 (2021), 1203-1254.
				\bibitem{GuoLi}Guo, Y., Li, Y. Mass concentration of two-spinless Fermi systems with attractive interactions. J. Differential Equations 410 (2024), 197-227.	
				\bibitem{GLW}Guo, Y., Li, Y. and Wu, S. Ground States of Attractive Fermi Schr\"odinger Systems with Ring-Shaped Potentials. Calc. Var. 65, 230 (2026). 		
				
				\bibitem{GuoSeiringer} Guo, Y., Seiringer, R. On the mass concentration for Bose-Einstein condensates with attractive interactions. Lett. Math. Phys. 104 (2014), no.2, 141-156.
				\bibitem{GuoWangZengZhou}Guo, Y., Wang, Z., Zeng, X., Zhou, H. Properties of ground states of attractive Gross-Pitaevskii equations with multi-well potentials. Nonlinearity 31 (2018), no.3, 957-979.
				\bibitem{GuoZengZhou} Guo, Y., Zeng, X., Zhou, H. Energy estimates and symmetry breaking in attractive Bose-Einstein condensates with ring-shaped potentials. Ann. Inst. H. Poincar\'{e} C Anal. Non Lin\'{e}aire 33(2016), no.3, 809-828.
				\bibitem{Hof} Hoffmann-Ostenhof, M. and Hoffmann-Ostenhof, T. Schr\"odinger inequalities and asymptotic behavior of the electron density of atoms and molecules, Phys. Rev. A 16 (1977), 1782-1785.
				\bibitem{LenzmannLewin}Lenzmann, E., Lewin, M. Minimizers for the Hartree-Fock-Bogoliubov theory of neutron stars and white dwarfs. Duke Math. J. 152 (2010), no.2, 257-315.
				\bibitem{Lieb}Lieb, E.H. and Loss, M. Analysis, Graduate Studies in Mathematics, Vol. 14, 2nd edn. Amer. Math. Soc, Providence, RI 2001.
				\bibitem{jfa2013}Moroz, V. and Van~Schaftingen, J. Groundstates of nonlinear Choquard equations: existence, qualitative properties and decay asymptotics, J. Funct. Anal. 265 (2013), no.~2, 153-184
				\bibitem{Reed}Reed, M.~C. and Simon, B. Methods of modern mathematical physics. II. Fourier analysis, self-adjointness, Academic Press, New York-London, 1975
				\bibitem{Reed1}Reed, M.~C. and Simon, B. Methods of Modern Mathematical Physics IV. Analysis of Operators, Academic Press, New York, 1978.
				\bibitem{Simon} Simon, B. Trace Ideals and Their Applications, Mathematical Surveys and Monographs, Vol. 120, 2nd edn. American Mathematical Society, Providence, 2005.
				\bibitem{Smith}Smith, N. R., Dean, D. S., Doussal, P. L., Majumdar S. N. and Schehr, G. Noninteracting trapped fermions in double-well potentials: inverted-parabola kernel, Phys. Rev. A 101 (2020), 053602, 14 pp.
				\bibitem{Sol}Solovej, J. P. Proof of the ionization conjecture in a reduced Hartree-Fock model, Invent. Math. 104 (1991), 291-311.
				\bibitem{Willem} Willem, M. Minimax Theorems, Progress in Nonlinear Differential Equations and Their Applications, Vol. 24, Birkh\"auser Boston, Inc. Boston, 1996.
				\bibitem{WuZengZhang}Wu, Y., Zeng, X. and Zhang, Y. Gagliardo-nirenberg-sobolev inequalities and	ground states of fermions in relativistic hartree-fock model. arxiv:2505.06613.
				
				
				
			\end{thebibliography}
		\end{document}